\tikzstyle{everypicture}+=[remember picture]
\def\TheoremsNumberedBySection{
\theoremstyle{TH}
\newtheorem{theorem}{Theorem}[section]
\newtheorem{corollary}[theorem]{Corollary}
\newtheorem{lemma}[theorem]{Lemma}
\newtheorem{proposition}[theorem]{Proposition}

\theoremstyle{EX}
\newtheorem{definition}[theorem]{Definition}
\newtheorem{remark}[theorem]{Remark}
}
\renewenvironment{proof}[1][Proof.]{\Trivlist\item[\hspace*{1em}\hskip\labelsep{\it #1\enskip }]\ignorespaces}{\hfill $\blacksquare$ \endTrivlist\addvspace{0pt}}
\newcommand{\R}{\mathbb{R}}
\newcommand{\Q}{\mathbb{Q}}
\newcommand{\Z}{\mathbb{Z}}
\newcommand{\I}{\mathcal{I}}
\def\ve#1{\mathchoice{\mbox{\boldmath$\displaystyle\bf#1$}}
{\mbox{\boldmath$\textstyle\bf#1$}}
{\mbox{\boldmath$\scriptstyle\bf#1$}}
{\mbox{\boldmath$\scriptscriptstyle\bf#1$}}}
\newcommand{\x}{{\ve x}}
\newcommand{\z}{{\ve z}}
\renewcommand{\t}{{\ve t}}
\renewcommand{\b}{{\ve b}}
\renewcommand{\v}{{\ve v}}
\renewcommand{\r}{{\ve r}}
\newcommand{\q}{{\ve q}}
\newcommand{\yy}{{y_*}}
\newcommand{\yprime}{{y'_*}}
\newcommand{\yprimeprime}{{y''_*}}
\newcommand{\xx}{{x}}
\DeclareMathOperator    \conv           {conv}
\DeclareMathOperator    \cone           {cone}
\DeclareMathOperator    \intr                   {int}
\DeclareMathOperator    \rec                    {rec}
\DeclareMathOperator    \vol                    {vol}
\newcommand{\tikzvline}[2][]{
\draw [densely dashed,#1] ({rel axis cs:0,0} -| {axis cs:#2,0}) -- ({rel axis cs:0,1} -| {axis cs:#2,0});
}
\begin{document}
\TITLE{Minimizing Cubic and Homogeneous Polynomials over Integers in the Plane}
\RUNTITLE{Minimizing Cubic and Homogeneous Polynomials over Integers in the Plane}
\ARTICLEAUTHORS{%
\AUTHOR{Alberto Del Pia}
\AFF{Department of Industrial and Systems Engineering \& \\ Wisconsin Institutes for Discovery, University of Wisconsin-Madison, United States}
\AUTHOR{Robert Hildebrand}
\AFF{Institute for Operations Research, ETH Zürich, Switzerland}
\AUTHOR{Robert Weismantel}
\AFF{Institute for Operations Research, ETH Zürich, Switzerland}
\AUTHOR{Kevin Zemmer}
\AFF{Institute for Operations Research, ETH Zürich, Switzerland}
}
\RUNAUTHOR{Del Pia, Hildebrand, Weismantel, Zemmer}
\ABSTRACT{%
We complete the complexity classification by degree of minimizing a polynomial over the integer points in a polyhedron in $\R^2$.  Previous work shows that optimizing a quadratic polynomial over the integer points in a polyhedral region in $\R^2$ can be done in polynomial time, while optimizing a quartic polynomial in the same type of region is NP-hard.  We close the gap by showing that this problem can be solved in polynomial time for cubic polynomials.

Furthermore, we show that the problem of minimizing a homogeneous polynomial of any fixed degree over the integer points in a bounded polyhedron in $\R^2$ is solvable in polynomial time. We show that this holds for polynomials that can be translated into homogeneous polynomials, even when the translation vector is unknown.  We demonstrate that such problems in the unbounded case can have smallest optimal solutions of exponential size in the size of the input, thus requiring a compact representation of solutions for a general polynomial time algorithm for the unbounded case.
}
 
\maketitle

\section{Introduction}

We study the problem of minimizing a polynomial with integer coefficients over the integer points in a polyhedron.  When the polynomial is of degree one, this becomes integer linear programming, which Lenstra \cite{lenstra_integer_1983} showed to be solvable in polynomial time in fixed dimension.  In stark contrast, De Loera et~al.\ \cite{loera_integer_2006} showed that even for polynomials of degree four in two variables, this minimization problem is NP-hard.  For a survey on the complexity of mixed integer nonlinear optimization, see also Köppe \cite{koeppe_complexity_2012}.  Recently, Del Pia et~al.\ \cite{delpia_mixed_2014} showed that the decision version of mixed-integer quadratic programming is in {NP}.  Del~Pia and Weismantel \cite{delpia_integer_2013} showed that for polynomials in two variables of degree two, the problem is solvable in polynomial time.

Consider the problem 
\begin{equation}
\label{eq:main}
\min \{ f(\x) : \x \in P \cap \Z^n\},
\end{equation}
where $P = \{\x \in \R^n : A \x \leq \b\}$ is a rational polyhedron with $A \in \Z^{m \times n}$, $\b \in \Z^m$, and $m, n \in \Z_{\geq 0}$.
Let $d\in \Z_{\geq 0}$ bound the maximum degree of the polynomial function $f$ and let $M$ be the sum of the absolute values of the coefficients of $f$.
We use the words size and binary encoding length synonymously.
The size of $P$ is the sum of the sizes of $A$ and $\b$.
We say that Problem~\eqref{eq:main} can be solved in polynomial time if in time bounded by a polynomial in the size of $A,\b$ and $M$ we can either determine that the problem is infeasible, find a feasible minimizer, or show that the problem is unbounded by exhibiting a feasible point $\bar \x$ and an integer ray $\bar \r\in \rec(P)$ such that $f(\bar \x + \lambda \bar \r) \to -\infty$ as $\lambda \to \infty$.  We almost always assume the degree $d$ and the dimension $n$ are fixed in our complexity results. Moreover, in Sections~\ref{sec:cubic} and \ref{sec:homogeneous} we assume that $P$ is bounded.  Note that if $P$ is bounded, then there exists an integer $R \ge 1$ of polynomial size in the size of $P$ such that $P \subseteq B:= [-R,R]^2$ (see, for instance, \cite{schrijver_theory_1986}).

Previous work has shown that Problem~\eqref{eq:main} is solvable in polynomial time if it is 1-dimensional or the polynomial is quadratic, whereas for $n = 2$, $d = 4$ the problem is NP-hard, even when $P$ is bounded.

\begin{theorem}[\cite{delpia_integer_2013}, 1-dimensional polynomials and quadratics]
\label{thm:dimension1}
 Problem~\eqref{eq:main} is solvable in polynomial time when $n = 1$ with $d$ fixed, and when $n = 2$ with $d \leq 2$.
\end{theorem}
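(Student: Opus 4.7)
The plan is to treat the two parts separately; both rely on exploiting the low dimension to reduce the search space to polynomially many integer candidates that can be evaluated in polynomial time.

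For $n=1$ with fixed $d$, the polyhedron $P$ is an interval $[a,b]$ (possibly unbounded). First I would test unboundedness: if $P$ extends to $+\infty$, the leading coefficient of $f$ determines whether $f \to -\infty$ as $x \to +\infty$, in which case a witness ray can be exhibited; the symmetric test handles $-\infty$, and otherwise $P \cap \Z$ is a finite (but possibly exponentially long) set of consecutive integers. The key observation is that the derivative $f'$ is a polynomial of degree at most $d-1$ with at most $d-1$ real roots, which split $\R$ into at most $d$ monotonicity intervals. On each such interval, the integer minimum of $f$ is attained at one of the two integers closest to the corresponding root of $f'$, or at an endpoint $\lceil a \rceil$, $\lfloor b \rfloor$. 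Using standard real-root isolation for integer-coefficient polynomials one locates each root to precision better than $1/2$ in polynomial time, producing a constant-size list of integer candidates to evaluate.

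For $n=2$ with $d \le 2$, write $f(\x) = \x^\top Q \x + \c^\top \x + c_0$ with $Q \in \Z^{2\times 2}$ symmetric. If $Q=0$, apply Lenstra's algorithm \cite{lenstra_integer_1983} to the linear program. Otherwise I would split by the signature of $Q$. If $Q$ is positive (semi)definite, $f$ is convex with ellipsoidal or parabolic sublevel sets, and the minimum can be found by binary search on the level value $t$, testing integer feasibility of $\{f(\x) \le t\} \cap P$ by Lenstra's algorithm in fixed dimension two. If $Q$ is negative (semi)definite, $f$ is concave and attains its minimum over $P \cap \Z^2$ at a vertex of $\conv(P \cap \Z^2)$; in dimension two the integer hull has polynomially many vertices in the input size and can be enumerated in polynomial time. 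If $Q$ is indefinite, the level curves are hyperbolas, and after an affine change of coordinates $f$ takes the form $\alpha u v +\text{linear}$; each branch of a sublevel set is then the graph of a monotone univariate function, so intersecting with $P$ yields a bounded number of one-dimensional subproblems, each solvable via the $n=1$ case.

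The main obstacle is the indefinite quadratic case. Unlike the convex and concave cases, the level sets are not convex and the integer hull argument does not apply directly; one must decompose $P$ along the asymptotes of the hyperbola, parametrize the two monotone branches, and couple this with a binary search on the level value. Care is needed to ensure that the reduction to one-dimensional subproblems produces only polynomially many of them and that the change of coordinates is controlled in bit-size. Once this is in place, the dimension-one machinery built in the first part finishes the argument, which is why the two halves of the theorem are naturally proved together.
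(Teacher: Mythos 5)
This theorem is not proved in the paper at all: it is imported from \cite{delpia_integer_2013}, and the paper only remarks that the quadratic bivariate case is handled there by splitting $P$ into regions where $f$ is quasiconvex and where it is quasiconcave. Measured against that, your $n=1$ argument and your treatment of the semidefinite cases are essentially sound, but the indefinite case --- which you correctly identify as the crux --- contains a genuine gap rather than a proof. The claim that after passing to the form $\alpha uv + \text{linear}$ ``each branch of a sublevel set is the graph of a monotone univariate function, so intersecting with $P$ yields a bounded number of one-dimensional subproblems'' does not follow: monotonicity of the branches gives you no decomposition into polynomially many fibers (fixing one coordinate produces as many one-dimensional subproblems as there are integer columns in $P$, which is exponential in the input size), and the asymptote directions of $x^\top Qx$ are in general irrational (e.g.\ $x^2-2y^2$), so ``decomposing $P$ along the asymptotes'' does not even produce rational polyhedra without the kind of thin-box/approximation machinery this paper develops in Lemmas~\ref{lem:numerical} and~\ref{lem:line}. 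The missing idea is the one encapsulated in Lemma~\ref{lem:feasibility-test}: on a (rational polyhedral approximation of a) region where $f$ is quasiconvex, the sublevel set is a convex semialgebraic set and integer feasibility is decidable in fixed dimension by \cite{khackiyan_integer_2000}; on a region where $f$ is quasiconcave, the \emph{super}level set is convex, so an integer point of the sublevel set exists there iff some vertex of the integer hull \cite{cook_integer_1992} lies in it. Coupled with the binary search of Lemma~\ref{lem:feasibility_to_optimization}, this is what actually closes the indefinite case; your proposal names the obstacle but does not supply this mechanism.

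A secondary omission: in the $n=2$ case you never address unboundedness, which the problem definition requires you to certify with a feasible point and a ray. For $Q$ positive semidefinite with $P$ unbounded, $f$ can still tend to $-\infty$ along a recession ray $\r$ with $Q\r=0$ and negative linear term; for $Q$ negative semidefinite, the minimum over an unbounded integer hull need not be attained at a vertex, so the vertex-enumeration argument must be supplemented by checking the extreme rays. Also, your binary search presupposes a priori bounds on the optimal value, which you only have once boundedness (or a bounded region containing the optimum) has been established. These are patchable, but they are part of what a complete proof must contain.
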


\begin{lemma}[\cite{loera_integer_2006}]
\label{lem:deg4-np-hard} 
Problem~\eqref{eq:main} is NP-hard when $f$ is a polynomial of degree $d = 4$ with integer coefficients and $n = 2$, even when $P$ is bounded.
\end{lemma}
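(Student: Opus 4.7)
The plan is to reduce from an NP-hard number-theoretic problem stated in two integer variables. A convenient base is the bounded quadratic Diophantine feasibility problem: given integers $a, b, c, K, L$, decide whether there exist $x, y \in \Z$ with $0 \le x \le K$ and $0 \le y \le L$ satisfying a prescribed quadratic equation $p(x, y) = 0$. A classical NP-hardness result of this flavor, due to Manders and Adleman, establishes hardness for equations such as $a x^2 + b y = c$ subject to a polynomial-size bound on $x$, by reduction from 3-\textsc{Sat}; related NP-hardness statements for quadratic Diophantine equations in two variables with box constraints are well documented.

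Given such a base instance specified by a quadratic $p(x, y)$ and the box $P = \{(x, y) \in \R^2 : 0 \le x \le K, \; 0 \le y \le L\}$, I would take the objective
\[
f(x, y) := p(x, y)^2,
\]
which is a polynomial of degree $2 \deg(p) = 4$ with integer coefficients. Since $f$ is a square, $f(x, y) \ge 0$ everywhere, with equality iff $p(x, y) = 0$. Consequently, the minimum of $f$ over $P \cap \Z^2$ equals zero exactly when the Diophantine instance is satisfiable, and any feasible minimizer exhibits a witness. Since $P$ is a bounded rational polyhedron of size polynomial in the base instance, and the coefficients of $f$ are obtained by squaring those of $p$, the reduction is polynomial-time, yielding NP-hardness of Problem~\eqref{eq:main} for $n = 2$, $d = 4$, with $P$ bounded.

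The main obstacle is identifying the right NP-hard base problem: because the construction doubles the degree, the base polynomial must have degree at most two, and NP-hardness must persist under restriction to a polynomially bounded box in $\Z^2$. If a single quadratic equation does not suffice to encode the chosen source problem, one falls back on a sum of squares $f = \sum_{i} p_i(x, y)^2$ of several degree-$2$ polynomials. This keeps the degree at $4$, preserves nonnegativity and the characterization of zero minima by the simultaneous vanishing of the $p_i$, and therefore allows the reduction to impose several quadratic constraints at once without pushing the degree beyond $4$. Once such a base problem and encoding are in place, correctness of the reduction follows immediately from the sum-of-squares structure of $f$.
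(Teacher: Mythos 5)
Your proposal is correct and is essentially the argument behind the cited result of De~Loera et~al.: one reduces from the Manders--Adleman NP-complete problem of deciding solvability of $a x^2 + b y = c$ in nonnegative integers (where the box $0 \le x, y \le c$ comes for free and has polynomial encoding size) and minimizes the quartic $(a x^2 + b y - c)^2$ over that box, the minimum being zero iff the quadratic equation is solvable. This matches the paper's intended proof via \cite{loera_integer_2006} and \cite{manders_np_1976}, so no further comparison is needed.
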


Using the same reduction as Lemma~\ref{lem:deg4-np-hard}, it is possible to show that Problem~\eqref{eq:main} is NP-hard even when $n = d = 2$, $P$ is a bounded, rational polyhedron, and we add a single quadratic inequality constraint (see \cite{manders_np_1976}).

We improve Theorem~\ref{thm:dimension1} to the case $n=2$ and $d = 3$.
\begin{theorem}[cubic]
\label{thm:min_cubic_unbounded}
Problem~\eqref{eq:main} is solvable in polynomial time for $n = 2$ and $d = 3$.
\end{theorem}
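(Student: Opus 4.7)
\emph{Proof proposal.} The plan is to reduce a (possibly unbounded) instance to a bounded one in polynomial time and then invoke the polynomial-time algorithm for the bounded cubic case developed in Section~\ref{sec:cubic}. After verifying integer feasibility (decidable in polynomial time in fixed dimension by Lenstra's algorithm), I compute integer generators $\r_1, \r_2$ of the recession cone $\rec(P)$; in $\R^2$ there are at most two such extreme rays, extractable from $(A,\b)$ in polynomial time with polynomially bounded bit-length. Decomposing $f = f_3 + f_2 + f_1 + f_0$ into its homogeneous parts and expanding along a ray $\x_0 + \lambda \r$ gives
\[
f(\x_0 + \lambda \r) \;=\; f_3(\r)\,\lambda^3 \,+\, q(\x_0,\r)\,\lambda^2 \,+\, \ell(\x_0,\r)\,\lambda \,+\, f(\x_0),
\]
where $q$ is \emph{linear} in $\x_0$ (since the Hessian of a cubic is linear in its argument) and $\ell$ is quadratic in $\x_0$. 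The asymptotic sign of $f$ along any feasible ray is then governed by the leading nonvanishing coefficient in this expansion.

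The core unboundedness test asks whether $f_3$ takes a negative value on $\rec(P) = \cone\{\r_1,\r_2\}$. Parameterizing by $\r(t) = \r_1 + t \r_2$ with $t \in [0,\infty]$ reduces this to a sign analysis of a univariate cubic in $t$; a negative value at some rational $t^*$ yields, after clearing denominators, an integer ray $\bar\r \in \rec(P)$ with $f_3(\bar\r) < 0$, which together with any $\x_0 \in P \cap \Z^2$ is the required unboundedness certificate. If instead $f_3 > 0$ on $\rec(P) \setminus \{0\}$, then $f_3(\r) \geq 1$ on every nonzero integer $\r \in \rec(P)$ (since $f_3$ has integer coefficients), and an explicit polynomial-size bound $M$ forces every minimizer into $P \cap [-M,M]^2$, reducing to the bounded case handled by Section~\ref{sec:cubic}.

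The delicate situation is when $f_3$ is nonnegative on $\rec(P)$ but vanishes along an extreme ray $\r_i$. Here the sign at infinity along $\r_i$ is controlled by $q(\x_0, \r_i)$, a linear function of $\x_0$; minimizing it over $\x_0 \in P \cap \Z^2$ is integer linear programming in fixed dimension, hence polynomially solvable, and a negative minimum certifies unboundedness via the ray $\r_i$. If $q(\cdot, \r_i) \geq 0$ throughout, one descends to the linear coefficient $\ell(\x_0, \r_i)$, which is a quadratic polynomial in $\x_0$ and is minimized in polynomial time over $P \cap \Z^2$ by Theorem~\ref{thm:dimension1}. Iterating the descent across both extreme rays $\r_1, \r_2$ and combining the polynomial bounds obtained from the positive-leading-coefficient regimes produces a polynomial-size $M$ so that every minimizer of $f$ lies in $P \cap [-M,M]^2$, and the bounded instance is then settled by Section~\ref{sec:cubic}.

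The principal obstacle is precisely this degenerate regime: the relevant strips of $P$ parallel to a degenerate extreme ray $\r_i$ contain exponentially many integer points, so one cannot simply enumerate the starting points $\x_0$. Instead one must exploit the low-degree integer optimization results in two variables (Lenstra's algorithm for $d = 1$ and Theorem~\ref{thm:dimension1} for $d \leq 2$) to either exhibit an unboundedness witness or certify polynomial-size bounds on the region where any minimizer can occur, without exponential blow-up in the bit-length.
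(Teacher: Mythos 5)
Your overall skeleton is the same as the paper's: expand $f(\x+\lambda\r)$ along recession directions, certify unboundedness when the cubic homogeneous part $h=f_3$ is negative somewhere on $\rec(P)$, reduce to the bounded algorithm of Theorem~\ref{thm:cubic} with a polynomial-size box when $h$ is strictly positive, and descend to the $\lambda^2$ and $\lambda$ coefficients along a degenerate ray. But there are two genuine gaps. First, you assume that if $h\ge 0$ on $\rec(P)$ and vanishes at some nonzero direction, then it vanishes along an extreme ray $\r^1$ or $\r^2$. This is false: when $h$ has a repeated linear factor (Type~\eqref{it:L1-L2}), for instance $h(x,y)=x^2y$ with a cone containing $(0,1)$ in its interior, $h$ is nonnegative on the cone while its zero line runs through the interior. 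Your trichotomy then misclassifies the instance as the ``strictly positive'' regime (where no polynomial-size threshold exists, since $\min\{h(\r):\r\in\conv\{\r^1,\r^2\}\}=0$), and your descent along the two extreme rays never examines the dangerous direction. The paper needs Lemmas~\ref{lem:repeated-lines} and~\ref{lem:determine-case} precisely here: the repeated zero line is rational and computable, and the cone is subdivided so every zero direction becomes an extreme ray of a subproblem. (Relatedly, if $\rec(P)$ is not pointed there are not ``at most two extreme rays''; the paper first splits $P$ along orthants.)

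Second, the sentence ``combining the polynomial bounds obtained from the positive-leading-coefficient regimes produces a polynomial-size $M$'' is exactly where the difficulty sits, and the proposal does not supply it. In the degenerate regime $h(\r^1)=0$, the leading coefficient $h(\r)$ is positive but not bounded below by any polynomial-size constant for $\r$ near $\r^1$, so no Case-2-style threshold applies uniformly; and even if $q(\x_0,\r^1)\ge 1$ at every integer $\x_0$, the linear coefficient $\ell(\x_0,\r^1)$ grows as $\x_0$ moves out along $\r^2$ within the same strip, so there is no uniform bound on $\lambda$ over all starting points. The paper's Case~3 resolves this with the decomposition $P_I=(\hat\x+\rec(P))\cup(Q+\cone\{\r^2\})$, the invariance of $g_2^{\r^1}$ and $g_1^{\r^1}$ along $\r^1$ (reducing their minimization to one-dimensional integer polynomial problems via Theorem~\ref{thm:dimension1}), and, crucially, in Case~3a2 a perturbed ray $\hat\r$ with $g_2(\hat\x,\r)\ge\tfrac12$ on $\conv\{\r^1,\hat\r\}$, splitting the cone into a part controlled by the quadratic coefficient and a part where $h$ has a polynomial-size positive minimum; the subcases $g_2^*=0$ and $g_2^{\r^1}\equiv 0$ require further subdivision and univariate cubic integer minimization. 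None of this is replaced by your ILP/quadratic descents, which moreover, as stated, minimize $\ell(\cdot,\r^1)$ over all of $P\cap\Z^2$ rather than over the zero set of $q(\cdot,\r^1)$, so a negative value need not certify unboundedness. A smaller but real point: in the strictly positive case, ``$f_3\ge 1$ at nonzero integer points of the cone'' is not the quantity you need, since in the decomposition $\x=\q+\lambda\r$ the direction $\r\in\conv\{\r^1,\r^2\}$ is generally not integral; the paper instead extracts a polynomial-size positive lower bound on $h$ over the segment via Lemma~\ref{lem:lower-bound} and Theorem~\ref{thm:polynomial_root_bounds}.
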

We prove this theorem under the assumption that $P$ is bounded in Section~\ref{sec:cubic}, and without this additional assumption in Section~\ref{sec:cubic-unbounded}.
Thus, we complete the complexity classification by degree $d$ for Problem~\eqref{eq:main} when $n=2$.
It is an open question whether Problem~\eqref{eq:main} can be solved in polynomial time for $n \ge 3$ and $d \in \{2, 3\}$.

Problem~\eqref{eq:main} remains difficult even when the polynomials are restricted to be homogeneous and the degree is fixed.  
The polynomial $h$ is \emph{homogeneous} of degree $d$ if 
\begin{equation}
h(\x) = \sum_{\v \in \Z^n_+, \|\v\|_1 = d} c_\v \x^\v ,
\end{equation}
where $c_\v \in \R$, $\|\cdot \|_1$ denotes the 1-norm, and $\x^\v = \prod_{i=1}^n x_i^{v_i}$.
The case of general polynomials in $n$ variables reduces to the case of homogeneous polynomials in $n+1$ variables by homogenizing $f(\x)$ using an additional variable $x_{n+1}$ and adding the constraint $x_{n+1} = 1$ to $P$.
Thus, complexity results for general polynomials provide partial complexity results for homogeneous polynomials.

\begin{proposition}
\label{prop:NP-hard-hom}
Problem~\eqref{eq:main} is NP-hard when $f$ is a homogeneous polynomial of degree $d$ with integer coefficients, $n \geq 3$ and $d \ge 4$ are fixed, even when $P$ is bounded.
\end{proposition}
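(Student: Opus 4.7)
The plan is to reduce from Lemma~\ref{lem:deg4-np-hard} by a standard homogenization. Given an instance of Problem~\eqref{eq:main} with $n = 2$, a degree-$4$ integer polynomial $f(x_1, x_2)$, and a bounded rational polyhedron $P \subseteq \R^2$, I would first introduce a fresh variable $x_3$ and form the homogenization
\[
g(x_1, x_2, x_3) \;:=\; x_3^{4}\, f\!\left(\tfrac{x_1}{x_3},\, \tfrac{x_2}{x_3}\right),
\]
which is a homogeneous polynomial of degree $4$ in three variables with integer coefficients and encoding size polynomial in that of $f$. Then I would define the bounded rational polyhedron $P' := \{\x \in \R^3 : (x_1, x_2) \in P,\ x_3 = 1\} \subseteq \R^3$. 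For every $\x \in P' \cap \Z^3$ we have $g(\x) = f(x_1, x_2)$, so the two optimization problems have the same optimal value. This settles the case $n = 3$, $d = 4$.

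To reach an arbitrary fixed $d \ge 4$, I would replace $g$ by $g \cdot x_3^{d-4}$; this is still homogeneous, now of degree $d$, and since $x_3 = 1$ on $P'$ it evaluates identically to $g$ at every feasible integer point. To reach an arbitrary fixed $n \ge 3$, I would pad the ambient space with dummy coordinates $x_4, \dots, x_n$ together with the linear equalities $x_i = 0$ for $i = 4, \dots, n$. The padded polyhedron in $\R^n$ is still bounded, and the polynomial $g\cdot x_3^{d-4}$, viewed as a polynomial in $n$ variables that merely happens not to depend on $x_4, \dots, x_n$, is still homogeneous of degree $d$ with integer coefficients. Every feasible integer point in the padded instance projects to a feasible integer point of the original one, with identical objective value.

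Chained together, these steps form a polynomial-time many-one reduction from the NP-hard problem of Lemma~\ref{lem:deg4-np-hard} to the homogeneous version in $n$ variables and degree $d$, establishing the proposition. There is essentially no technical obstacle here; the only things to verify are that the encoding size of $g \cdot x_3^{d-4}$ is polynomial in the size of $f$ (immediate, since $d$ and $n$ are fixed) and that the padded polyhedron remains bounded (immediate from the equalities $x_3 = 1$ and $x_i = 0$ combined with the boundedness of $P$).
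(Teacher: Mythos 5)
Your reduction is correct and is exactly the argument the paper intends: the proposition follows from the homogenization device described in the paragraph immediately preceding it (homogenize the degree-4 bivariate polynomial of Lemma~\ref{lem:deg4-np-hard} with a new variable constrained to equal $1$), and your additional steps of multiplying by $x_3^{d-4}$ and padding with zero-constrained dummy variables are the standard way to reach arbitrary fixed $d \ge 4$ and $n \ge 3$. No gaps.
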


We next show that we cannot expect tractable size solutions to unbounded homogeneous minimization problems in dimension two. 

\begin{proposition}
\label{prop:pell}
There exists an infinite family of instances of Problem~\eqref{eq:main} with $f$ homogeneous, $d = 4$, $n=2$ such that the minimal size solution to Problem~\eqref{eq:main} has exponential size in the input size.
\end{proposition}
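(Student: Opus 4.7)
The plan is to construct a family of instances that directly encodes Pell's equation. For a positive integer $D$ that is not a perfect square, I would take
\[
f(x,y) = (x^2 - D y^2)^2 = x^4 - 2 D x^2 y^2 + D^2 y^4,
\]
which is homogeneous of degree $4$ with integer coefficients whose binary encoding length is $O(\log D)$. As the feasible region I would take the unbounded rational polyhedron $P = \{(x,y) \in \R^2 : y \ge 1\}$, which has constant size. Together this gives an instance of Problem~\eqref{eq:main} of input size $O(\log D)$.

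Next I would identify the optimal solutions. Since $D$ is not a perfect square, $x^2 - D y^2$ is a nonzero integer for every $(x,y) \in \Z^2$ with $y \ge 1$, so $f(x,y) \ge 1$ on $P \cap \Z^2$. Equality holds exactly when $x^2 - D y^2 \in \{-1,+1\}$, i.e., when $(x,y)$ is a solution of the positive or negative Pell equation with $y \ge 1$. Thus the set of minimizers of the instance coincides with the set of positive Pell solutions for this $D$, and in particular the smallest minimizer has size comparable to the fundamental Pell solution.

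To finish, I would invoke a classical fact about the continued-fraction expansion of $\sqrt{D}$: there is an infinite family of non-square positive integers $D_k$ for which every integer solution $(x,y)$ of $x^2 - D_k y^2 = \pm 1$ with $y \ge 1$ satisfies $\log(|x|+y) = \Omega(\sqrt{D_k})$; equivalently, the fundamental solution has size $\exp(\Omega(\sqrt{D_k}))$. Since $\sqrt{D_k} = 2^{\Theta(\log D_k)}$ is exponential in the input size $O(\log D_k)$, the minimum-size minimizer of the corresponding instance has binary encoding length exponential in the input size, which is precisely what the proposition claims.

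The hard part is really just the number-theoretic input: one must point to (or briefly extract from the theory of continued fractions) the existence of infinitely many non-square $D$ whose fundamental Pell solution is exponential in $\log D$. Everything else -- the homogeneity and degree of $f$, the bound on the input size, and the identification of the minimizers of $f$ on $P\cap\Z^2$ with Pell solutions -- is an immediate verification.
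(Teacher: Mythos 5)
Your construction is correct and is essentially the paper's own proof: the same objective $f(x,y)=(x^2-Ny^2)^2$ over an unbounded polyhedron with $y\ge 1$, with the minimizers identified with the integer solutions of $x^2-Ny^2=\pm 1$ and the optimum equal to $1$. The only difference is that you leave the key number-theoretic input as a generic ``classical fact,'' whereas the paper instantiates it concretely via Lagarias's family $N=5^{2k+1}$ (whose minimal negative Pell solution has size $\Omega(5^k)$) combined with the theorem that, when the negative Pell equation is solvable, its minimal solution is also the minimal solution of $x^2-Ny^2=\pm 1$.
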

\begin{proof}
  Consider the minimization problem
\begin{equation}
\label{eq:pell-opt}
\min \{\big(x^2 - N y^2\big)^2 : (x,y) \in P \cap \Z^2\},
\end{equation}
where $P=\{(x,y) \in \R^2 : x \ge 1, \, y \ge 1\}$ is an unbounded rational polyhedron and $N$ is a nonsquare integer.  The objective function is a homogeneous bivariate polynomial of degree four.  
Since $(0,0) \notin P$, $(x^2 - N y^2)^2$ is nonnegative, and since $N$ is nonsquare, the optimum of Problem~\eqref{eq:pell-opt} is strictly greater than zero.  Note that $(x^2 - N y^2)^2 =1$ if and only if $(x,y)$ is a solution to either the \emph{Pell equation}, $x^2 - N y^2 = 1$, or the \emph{Negative Pell equation}, $x^2 - N y^2 = -1$.  The Pell equation has an infinite number of positive integer solutions (see, for instance, \cite{Walker1967}) and therefore, we infer that the optimum of Problem~\eqref{eq:pell-opt} equals $1$.  

Lagarias \cite[Appendix A]{lagarias_computational_1980} shows that the Negative Pell equation with $N = 5^{2k + 1}$ has solutions for every $k \ge 1$ and that the solution $(x^*, y^*)$ to this equation with minimal size satisfies
\begin{equation*}
x^*+y^* \sqrt 5 = (2 + \sqrt 5)^{5^k}.
\end{equation*}
The method is based on principles due to Dirichlet \cite{dirichlet_propriete_1856}.
This implies that while the input is of size $O(k)$, any solution to the Negative Pell equation expressed in binary form for these $N$ has size $\Omega(5^k)$.

Theorem 6.10 of \cite{Cameron} (see also \cite{Walker1967}) shows that if the Negative Pell equation has a solution, then the minimal size solution to $x^2 - N y^2 = \pm 1$ is in fact the minimal size solution to the Negative Pell equation.  
Therefore, any solution to Problem~\eqref{eq:pell-opt} with $N= 5^{2k + 1}$ has an exponential size in the size of the input.  
 Since Problem~\eqref{eq:pell-opt} is has an objective function that is homogeneous of degree four and has linear constraints, this finishes the proof.
\end{proof}

For bounded polyhedra $P$, we will show that Problem~\eqref{eq:main} is solvable in polynomial time for any fixed degree in two variables when the objective function is a polynomial that is a coordinate translation of a homogeneous polynomial.
A polynomial $f(\x)\colon \R^n \to \R$ is \emph{homogeneous translatable} if there exists $\t \in \R^n$ such that $f(\x+\t) = h(\x)$ for some homogeneous polynomial $h(\x)$.  In our results, we will assume that we are given a homogeneous translatable polynomial $f$ with integer coefficients, but that we are not given the translation $\t$.  Our algorithmic techniques apply to this natural generalization of homogeneous polynomials without even needing to solve for $\t$.  Even so, for $n=2$, we show in the Appendix (Proposition~\ref{prop:ht}) that in polynomial time we can check if $f$ is homogenous translatable and produce a rational $\t$ if it is.

\begin{theorem}[homogeneous translatable, bounded]
\label{thm:min_homogeneous_bounded}
Problem~\eqref{eq:main} is solvable in polynomial time for $n = 2$ and any fixed degree $d$, provided that $f$ is homogeneous translatable and $P$ is bounded.
\end{theorem}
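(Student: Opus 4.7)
The plan is to exploit the radial structure of $f$ around the (unknown) translation $\t$. Since $f(\x)=h(\x-\t)$ with $h$ homogeneous of degree $d$, along any ray from $\t$ in direction $\u\in\R^2\setminus\{0\}$ we have the pure-monomial behavior $f(\t+s\u)=s^d h(\u)$. Writing $h(x,y)=y^d\,\tilde h(x/y)$ for a univariate $\tilde h$ of degree at most $d$, the sign of $h(\u)$ is determined by the angular direction of $\u$ alone, and is constant on each of the at most $d$ angular sectors cut out by the real linear factors of $h$. After translation, these become sectors emanating from $\t$ on which $f$ has constant sign and whose absolute value is monotone in the distance from $\t$; this is the key structural feature that separates homogeneous translatable polynomials from general polynomials of high degree.

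First I would show that the optimum of \eqref{eq:main} is attained on a line through $\t$ drawn from a polynomially sized list of candidate directions. In each sign-sector, monotonicity in $|s|$ reduces the task to selecting the integer point of $P$ that is farthest from $\t$ (for sectors where $f<0$) or closest to $\t$ (where $f>0$) along a direction that makes $|h(\u)|$ extremal. The extremal directions of $h$ over the unit circle are governed by the roots of $\tilde h$ and $\tilde h'$, producing $O(d)$ candidate slopes; coupled with the $O(1)$ vertex-directions from $\t$ to the vertices of $P$, this yields finitely many candidate lines through $\t$. For each candidate line with rational primitive direction $(a,b)$, the integer points in $P$ on that line form an arithmetic progression, and restricting $f$ gives a univariate degree-$d$ polynomial over an integer interval, which can be minimized in polynomial time by Theorem~\ref{thm:dimension1}. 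For integer points that do not lie exactly on a candidate line but within unit distance of one, I would enumerate via best rational approximations (continued fractions) to the critical slopes, which contribute only $O(\log R)$ additional candidates per slope.

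The main obstacle I anticipate is that $\t$ is not given and may be irrational, so ``lines through $\t$'' cannot be listed directly. I would circumvent this by working with $f$ alone: the critical slopes of $\tilde h$ can be extracted from the univariate polynomial obtained by eliminating one variable from $\nabla f$ (using the fact that $\nabla f(\x)=\nabla h(\x-\t)$ inherits the radial zero-structure of $\nabla h$), and the restriction of $f$ to any line of rational direction can be assembled from the coefficients of $f$ without recovering $\t$. The hard part will be justifying the structural claim that the optimum is always attained by (or lies within unit distance of) an integer point on one of these finitely many candidate lines, and carefully handling degenerate cases in which $h$ has repeated real factors, or the critical slope is rational with a denominator that interacts badly with the translated lattice $\Z^2-\t$.
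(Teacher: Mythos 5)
There is a genuine gap, and it sits exactly where you flagged it: the structural claim that the optimum lies on (or within unit distance of) one of polynomially many candidate lines through $\t$ is not established, and as stated it is not true. The candidate directions you propose (zero lines of $h$, critical directions of $h$ on the unit circle, directions from $\t$ to vertices of $P$) are determined by $f$ and the vertices alone, but the location of the minimizer is governed by the \emph{interaction} between $h$, the facets of $P$, and the lattice. Already for $f(x,y)=-xy^3$ with $\t=\ve 0$ and $P=\{x,y\ge 0,\ ax+by\le c\}$, the continuous minimizer lies where the derivative of $h$ \emph{along the active edge} vanishes, a direction depending on $(a,b)$ that is in none of your candidate families; and the best integer point is then some lattice point near that edge whose direction from $\t$ is dictated by Diophantine properties of the edge slope, with no reason to be within unit distance of any of your $O(d+\log R)$ lines. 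The continued-fraction patch does not repair this: approximating a critical slope well does not imply the optimal lattice point lies near the approximating line, since along the boundary of $P$ the objective can be nearly flat over a long stretch. A second, independent problem is the reduction inside a fixed sign-sector: in a sector where $f>0$, minimizing $s^d h(\u)$ is not ``pick the integer point closest to $\t$,'' because points far from $\t$ but angularly very close to a zero line of $h$ (where $h(\u)$ is tiny) can beat nearby points; so even the per-sector subproblem is again a two-dimensional lattice problem, not a one-dimensional one. In short, the proposal reduces the problem to finitely many univariate problems without any tool that controls the lattice-versus-boundary interaction, which is the actual difficulty.

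For comparison, the paper never tries to locate the minimizer geometrically. It reduces optimization to feasibility of sublevel sets by bisection (Lemma~\ref{lem:feasibility_to_optimization}), and handles each sublevel set through a \emph{division description} (Theorem~\ref{thm:dd-min}): using $D_f=\tfrac{-d}{d-1}\,h\,\det(\nabla^2 h)$ (Lemma~\ref{lemma:Df-det}, Corollary~\ref{cor:Df_homogeneous}), the box is partitioned into rational polyhedra where $f$ is quasiconvex or quasiconcave plus a few rational lines covering the (possibly irrational) zero lines of $D_f$ (Theorem~\ref{thm:partition_Z2-translatable}, using the volume-$\tfrac12$ quadrilaterals of Lemma~\ref{lem:line}); on each piece, integer feasibility of the convex side is decided by the convex semialgebraic algorithm of Khachiyan--Porkolab and of the concave side via the vertices of the integer hull (Lemma~\ref{lem:feasibility-test}). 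Those two black boxes are precisely what absorbs the lattice/boundary interaction that your line-enumeration cannot. If you want to salvage your radial picture, the sector decomposition you describe is essentially the sign decomposition of Theorem~\ref{thm:partition_Z2-translatable} applied to $f$ itself rather than to $D_f$; but you would still need a convex-feasibility-type subroutine (not a list of lines) to search each sector, which brings you back to the paper's route.
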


This theorem highlights the fact that the complexity of bounded polynomial optimization with two integer variables is not necessarily related to the degree of the polynomials, but instead to the difficulty in handling the lower order terms.

Despite the possibly large size of solutions to minimizing homogeneous polynomials of degree four (see Proposition~\ref{prop:pell}), Theorem~\ref{thm:min_cubic_unbounded} shows that we can solve the unbounded case for degree three.

The details of our proofs strongly rely on the properties of cubic and homogeneous polynomials.
When $f \colon\R^2 \to \R$ is a quadratic polynomial, \cite{delpia_integer_2013} proves Theorem~\ref{thm:dimension1} using the fact that $P$ can be divided into regions where $f$ is quasiconvex and quasiconcave.  We use a similar approach for homogeneous polynomials and determine quasiconvexity and quasiconcavity by analyzing the bordered Hessian. We show that the bordered Hessian can be well understood for homogeneous polynomials.  For general polynomials, these regions cannot in general be described by hyperplanes and are much more complicated to handle, even for the cubic case.

In Section~\ref{sec:preliminaries}, we present the tools for the main technique of the paper.  This technique is based on an operator that determines integer feasibility on sets $P \cap C$ and $P \setminus C$, where $P$ is a polyhedron, $C$ is a convex set, and the dimension is fixed.  It relies on two important previous results, namely that in fixed dimension the feasibility problem over semialgebraic sets can be solved in polynomial time \cite{khackiyan_integer_2000}, and the vertices of the integer hull of a polyhedron can be computed in polynomial time \cite{cook_integer_1992,hartmann-1989-thesis}.  We employ this operator to solve the feasibility problem by dividing the domain into regions where this operator can be applied.

In Section~\ref{sec:numerical}, we give some results related to numerically approximating roots of univariate polynomials, which we use throughout this paper.  We show how we can find inflection points of a particular function derived from the quadratic equation using these numerical approximations, which will play a key role in Section~\ref{sec:cubic}.

In Section~\ref{sec:cubic}, we prove Theorem~\ref{thm:min_cubic_unbounded} under the assumption that $P$ is bounded.  We do this by dividing the feasible domain into regions where either the sublevel or superlevel sets of $f$ can be expressed as a convex semialgebraic set.  With this division in hand, the operator presented in Section~\ref{sec:preliminaries} is then applied.

In Section~\ref{sec:homogeneous}, we derive a similar division description of the feasible domain for homogeneous polynomials. While for cubic polynomials the division description depends on the individual sublevel sets, there is a single division description that can be used for all sublevel sets of a particular homogeneous function.
We separate the domain into regions where the objective function is quasiconvex or quasiconcave. These regions allow us to use the operator from Section~\ref{sec:preliminaries}, establishing the complexity result of Theorem~\ref{thm:min_homogeneous_bounded}. 

In Section~\ref{sec:cubic-unbounded}, we consider again cubic polynomials, but relax the requirement that $P$ is bounded, and thus prove Theorem~\ref{thm:min_cubic_unbounded}.

\section{Operator on Convex Sets and Polyhedra}
\label{sec:preliminaries}
Our main approach for solving Problem~\eqref{eq:main} for bounded $P$ is to instead solve the feasibility problem.  As is well known, the feasibility problem and the minimization problem are polynomial time equivalent via reduction with the bisection method, given that appropriate bounds on the objective are known.  We summarize this here. Given a function $f : \R^2 \to \R$ and $\omega \in \R$, define $S^f_{* \omega} := \{ (x,y)\in \R^2 : f(x,y) * \omega\}$ for $* \in \{ \leq, \geq, <, >, =\}$.

\begin{lemma}[feasibility to optimization]
\label{lem:feasibility_to_optimization}
Let $f$ be a bivariate polynomial of fixed degree $d$ with integer coefficients and suppose that $P$ is bounded. Then, if for each $\omega \in \Z +  \tfrac{1}{2}$ we can decide in polynomial time whether the set $S_{\le \omega}^f \cap \Z^2$ is empty or not, we can solve Problem~\eqref{eq:main} in polynomial time.
\end{lemma}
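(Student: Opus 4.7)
The plan is the classical bisection reduction from optimization to feasibility, followed by a coordinate search to recover an explicit minimizer. Because $P$ is bounded, there exists an integer $R \ge 1$ of size polynomial in the size of $P$ with $P \subseteq [-R,R]^2$. For every $(x,y) \in P$, each monomial $c_\v x^{v_1} y^{v_2}$ of total degree at most $d$ satisfies $|c_\v x^{v_1} y^{v_2}| \le |c_\v|\, R^d$, and summing over monomials yields $|f(x,y)| \le M R^d =: N$. Since $f$ has integer coefficients, $f(P\cap\Z^2)\subseteq \{-N,-N+1,\dots,N\}$, and $\log N = O(d\log R + \log M)$ is polynomial in the input size.

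To find the optimal value, I first query the oracle with $\omega = N+\tfrac12$: since $S^f_{\le N+1/2}\cap P\cap \Z^2 = P\cap\Z^2$, a negative answer certifies infeasibility of Problem~\eqref{eq:main}. Otherwise I perform binary search on half-integer thresholds in $[-N-\tfrac12,\, N+\tfrac12]$, maintaining an interval $[\omega_{\mathrm{lo}},\omega_{\mathrm{hi}}]$ on which the oracle reports infeasibility at $\omega_{\mathrm{lo}}$ and feasibility at $\omega_{\mathrm{hi}}$, and halving it at each step by querying the half-integer nearest the midpoint. After $O(\log N)$ calls I obtain the unique integer $v^\star$ with $S^f_{\le v^\star + 1/2}\cap P\cap\Z^2 \ne \emptyset$ and $S^f_{\le v^\star - 1/2}\cap P\cap\Z^2 = \emptyset$; this $v^\star$ is the minimum value of $f$ on $P\cap \Z^2$.

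To produce an actual minimizer once $v^\star$ is known, I would localize its coordinates by two further rounds of bisection. Since $P\cap\Z^2 \subseteq \{-R,\dots,R\}^2$, for integer $k\in[-R,R]$ I intersect $P$ with the half-space $\{x\le k\}$, yielding a rational bounded polyhedron of polynomial size, and ask whether the oracle still reports feasibility at $\omega = v^\star + \tfrac12$. Bisecting on $k$ in $O(\log R)$ calls pins down the $x$-coordinate $x^\star$ of some optimal point; fixing $x = x^\star$ and repeating on $y$ yields $y^\star$, giving an explicit minimizer $(x^\star, y^\star)$.

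The main subtlety lies in this last step: the hypothesis of the lemma must be interpreted as providing feasibility of $S^f_{\le \omega} \cap Q \cap \Z^2$ for any bounded rational polyhedron $Q$ of polynomial size in the input, not merely for $Q = P$. This is the natural reading dictated by the context of Problem~\eqref{eq:main} (whose feasible region is $P$), and under it the total procedure uses polynomially many polynomial-time oracle calls and therefore runs in polynomial time overall.
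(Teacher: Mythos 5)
Your proposal is correct and takes essentially the same approach as the paper, whose proof is exactly the binary search on $\omega \in [-MR^d, MR^d] \cap (\Z + \tfrac12)$ using the bound $|f| \le M R^d$ on $B = [-R,R]^2$. Your additional coordinate-bisection step to extract an explicit minimizer (and the attendant reading of the oracle as applying to sub-polyhedra of $P$) fills in a detail the paper leaves implicit; it is consistent with how the feasibility test of Lemma~\ref{lem:feasibility-test} is actually invoked, since that test works for arbitrary bounded rational polyhedra.
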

\begin{proof}
Since $P\subseteq B = [-R,R]^2$ and  $f$ is a polynomial of degree $d$, it follows that $- M R^d \le f(x, y) \le M R^d$. Thus, the result is a simple application of binary search on values of $\omega$ in $[-MR^d, M R^d] \cap \left( \Z + 1/2 \right)$.
\end{proof}

We consider $\omega \in \Z + \tfrac{1}{2}$ since this implies $S^f_{\leq \lfloor \omega\rfloor} \cap \Z^2 = S^f_{\leq \omega} \cap \Z^2$ because $f$ has integer coefficients.  Furthermore, this implies that $S^f_{=\omega} \cap \Z^2 = \emptyset$, and therefore $S^f_{\leq \omega}\cap \Z^2 = S^f_{< \omega} \cap \Z^2$ and similarly $S^f_{\geq \omega} \cap \Z^2 = S^f_{> \omega} \cap \Z^2$.  This is important for the proof of Theorem~\ref{lem:cubic_y_deg3}.

A \emph{semi-algebraic set} in $\R^n$ is a subset of the form 
$\bigcup_{i=1}^s \bigcap_{j=1}^{r_i} \left\{ \x \in \R^n \,\, | \,\, f_{i,j}(\x)\ *_{i,j}\ 0 \right\}$
where $f_{i,j} : \R^n \to \R$ is a polynomial in $n$ variables and $*_{i,j}$ is either $<$ or $=$ for $i = 1, \dots, s$ and $j = 1, \dots, r_i$ (cf.\ \cite{bochnak_real_1998}).

\begin{lemma}[polyhedra/convex set operator]
\label{lem:feasibility-test}
Let $P,C \subseteq \R^n$ be such that $P$ is a rational, bounded polyhedron, $C$ is given by a membership oracle, $P \cap C$ is convex, and $n \in \Z_{\ge 1}$ is fixed. In polynomial time in the size of $P$, we can determine a point in the set $(P \setminus C) \cap \Z^n$ or assert that it is empty. Moreover, if $C$ is semi-algebraic and given by polynomial inequalities of degree at most $d \ge 2$ and with integral coefficients of size at most $l$, in polynomial time in $d$, $l$ and the size of $P$, we can determine a point in $P \cap C \cap \Z^n$ or assert that it is empty.
\end{lemma}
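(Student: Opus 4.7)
The plan is to treat the two claims separately, combining two classical algorithmic tools: the Cook--Hartmann--Kannan--Schrijver polynomial-time enumeration of the vertices of the integer hull in fixed dimension for the first claim, and the Khachiyan--Porkolab feasibility algorithm for convex semi-algebraic sets for the second claim.

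For the first claim, I would first compute the integer hull $P_I := \conv(P \cap \Z^n)$. By \cite{cook_integer_1992,hartmann-1989-thesis}, in fixed dimension $P_I$ has polynomially many vertices and these can be enumerated in polynomial time in the size of $P$. Since each vertex is an integer point of size polynomial in that of $P$, I would then query each vertex against the membership oracle for $C$. If some vertex $\v$ is not in $C$, then $\v \in (P \setminus C) \cap \Z^n$ and we return it. If every vertex of $P_I$ lies in $C$, then all vertices of $P_I$ belong to $P \cap C$; by convexity of $P \cap C$ and boundedness of $P_I$ (inherited from $P$, so that $P_I$ equals the convex hull of its vertices), we conclude $P_I \subseteq P \cap C$. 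Hence $P \cap \Z^n \subseteq C$ and we safely report $(P \setminus C) \cap \Z^n = \emptyset$.

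For the second claim, I would apply the algorithm of Khachiyan and Porkolab \cite{khackiyan_integer_2000} for integer feasibility over convex semi-algebraic sets in fixed dimension directly to the set $P \cap C$. This set is convex by hypothesis, semi-algebraic as the intersection of the polyhedron $P$ with the semi-algebraic set $C$, bounded (since $P$ is), and described by polynomial inequalities whose degree and coefficient size are bounded in terms of $d$, $l$, and the size of $P$. The algorithm then returns a point of $(P \cap C) \cap \Z^n$ or correctly asserts emptiness in time polynomial in $d$, $l$, and the size of $P$.

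The main thing to verify carefully is simply that the setup matches these two off-the-shelf results: in Part 1, that a membership oracle for $C$ (rather than an explicit algebraic description) suffices, because we only ever evaluate $C$ on the polynomially many, polynomial-size vertices of $P_I$, and that it is convexity of $P \cap C$ — not of $C$ itself — that powers the vertex argument; in Part 2, that the descriptive complexity of $P \cap C$ as input to Khachiyan--Porkolab is polynomial in $d$, $l$, and the size of $P$. Once these bookkeeping items are in place, no further ideas are needed, and the lemma follows.
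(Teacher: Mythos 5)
Your proposal is correct and follows essentially the same route as the paper: compute the integer hull $P_I$ via \cite{cook_integer_1992,hartmann-1989-thesis}, test its vertices against the membership oracle for the first claim, and invoke \cite{khackiyan_integer_2000} on the convex semi-algebraic set $P \cap C$ for the second. Your phrasing of the vertex argument via convexity of $P \cap C$ (rather than of $C$ itself) is in fact slightly more careful than the paper's wording.
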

\begin{proof}
We can determine whether or not $(P \setminus C) = \emptyset$ by first computing the integer hull $P_I$ of $P$ in polynomial time using \cite{cook_integer_1992,hartmann-1989-thesis}.   Next, we test whether all of its vertices lie in $C$. If they all lie in $C$, then by convexity of $C$ we have that $P \cap C \cap \Z^n \subseteq C$, thus $(P \setminus C) \cap \Z^n$ is empty. Otherwise, since vertices are integral, we have found an integer point in $(P \setminus C) \cap \Z^n$.

Next, since $P \cap C$ is a convex, semialgebraic set, by \cite{khackiyan_integer_2000} we can determine in polynomial time whether $P \cap C\cap \Z^n$ is empty, and if it is not, compute a point contained in it.
\end{proof}

If we can appropriately divide up the feasible domain into regions of the type that Lemma~\ref{lem:feasibility-test} applies to, then we are able to solve Problem~\eqref{eq:main} in polynomial time. We formalize this in the remainder of this section.

\begin{definition}
\label{def:division-description}
Given a sublevel set $S^f_{\leq \omega}$ and a box $B= [-R,R]^2$, a \emph{division description} of the sublevel set on $B$ is a list of rational polyhedra $P_i$, $i = 1, \dots, l_1$, $Q_j$, $j = 1, \dots, \ell_2$, and rational lines $L_k$, $k = 1, \dots, \ell_3$, such that
\begin{enumerate}[(i)]
\item $P_i \cap S^f_{\leq \omega}$ is convex for $i=1, \dots, \ell_1$,
\item $Q_j \setminus S^f_{\leq \omega} = Q_j \cap S^f_{> \omega}$ is convex for $j=1, \dots, \ell_2$, 
\item and 
\begin{equation}
B\cap \Z^2 = \left(\bigcup_{i=1}^{\ell_1} P_i \cup \bigcup_{j=1}^{\ell_2} Q_j \cup \bigcup_{k=1}^{\ell_3} L_k\right) \cap \Z^2 .
\end{equation}
\end{enumerate}
\end{definition}

We will create division descriptions of sublevel sets $S^f_{\leq \omega}$ on a box $B:= [-R,R]^2$ with $P\subseteq B$.

\begin{theorem}
\label{thm:dd-min}
Suppose $P$ is bounded, and for every $\omega \in \Z + \tfrac{1}{2}$ with $\omega \in [-M R^d, M R^d]$, we can determine a division description for $S^f_{\leq \omega}$ on $B$ in polynomial time.  Then we can solve Problem~\eqref{eq:main} in polynomial time. 
\end{theorem}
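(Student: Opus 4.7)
The plan is to reduce Problem~\eqref{eq:main} to the feasibility problem via Lemma~\ref{lem:feasibility_to_optimization}, and then, for each threshold $\omega \in \Z + \tfrac{1}{2}$ arising in the bisection, to decide emptiness of $P \cap S^f_{\le \omega} \cap \Z^2$ by walking through the pieces of the given division description and dispatching each one to Lemma~\ref{lem:feasibility-test}.

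First I would apply Lemma~\ref{lem:feasibility_to_optimization} so that the task becomes to test, for each of $O(d\log(MR))$ many values of $\omega$, whether $P \cap S^f_{\le \omega} \cap \Z^2$ is empty. Fix such an $\omega$ and, by hypothesis, compute in polynomial time a division description $\{P_i\}_{i=1}^{\ell_1}, \{Q_j\}_{j=1}^{\ell_2}, \{L_k\}_{k=1}^{\ell_3}$ of $S^f_{\le \omega}$ on $B$; since $P \subseteq B$, property (iii) of Definition~\ref{def:division-description} yields
\begin{equation}
P \cap S^f_{\le \omega} \cap \Z^2 \;=\; \bigcup_{i=1}^{\ell_1}\bigl(P \cap P_i \cap S^f_{\le \omega}\bigr) \cap \Z^2 \;\cup\; \bigcup_{j=1}^{\ell_2}\bigl(P \cap Q_j \cap S^f_{\le \omega}\bigr) \cap \Z^2 \;\cup\; \bigcup_{k=1}^{\ell_3}\bigl(P \cap L_k \cap S^f_{\le \omega}\bigr) \cap \Z^2,
\end{equation}
so it suffices to handle each piece separately and take a union over the polynomially many pieces.

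For a piece $P_i$, I would call the second part of Lemma~\ref{lem:feasibility-test} on the bounded rational polyhedron $P \cap P_i$ and the convex semialgebraic set $C = P_i \cap S^f_{\le \omega}$, defined by the linear inequalities of $P_i$ together with $2f(\x) - 2\omega \le 0$; the shift $\omega \in \Z + \tfrac{1}{2}$ is what ensures that $2\omega$ is integral and of polynomial size. For a piece $Q_j$, I would take $C = S^f_{>\omega}$, supplied by the evaluation-based membership oracle $\x \mapsto f(\x) > \omega$; property (ii) ensures that $(P \cap Q_j) \cap C = P \cap (Q_j \cap S^f_{>\omega})$ is convex, so the first part of Lemma~\ref{lem:feasibility-test} either produces a point in $(P \cap Q_j)\setminus C = P \cap Q_j \cap S^f_{\le \omega}$ or certifies emptiness. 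For a line $L_k$, I would parameterize its integer points by a single integer variable $t$ via a linear Diophantine equation, reducing the subproblem to a univariate integer polynomial minimization of fixed degree $d$, solvable in polynomial time by Theorem~\ref{thm:dimension1}.

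The main obstacle is already absorbed into the hypothesis that a division description can be computed in polynomial time; granting this, the remaining work is essentially the orchestration above. The only subtleties to watch are the input-format requirements of Lemma~\ref{lem:feasibility-test} (integer coefficients and polynomial coefficient size of the defining polynomial for $C$, convexity of the intersection $P \cap C$, and a membership oracle for $C$), all of which are taken care of uniformly by the choice $\omega \in \Z + \tfrac{1}{2}$ and by evaluating $f$, so the total running time is polynomial.
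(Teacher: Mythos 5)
Your proposal is correct and follows exactly the route the paper intends: the paper's own proof is just the one-line citation of Lemma~\ref{lem:feasibility_to_optimization} and Lemma~\ref{lem:feasibility-test}, and your write-up supplies the intended orchestration (bisection, then per-piece dispatch of the $P_i$'s to the semialgebraic part and the $Q_j$'s to the complement part of the operator, with the lines $L_k$ handled as univariate problems via Theorem~\ref{thm:dimension1}). The only point the paper leaves implicit that you also handle correctly is the role of $\omega \in \Z + \tfrac12$ in keeping coefficients integral and making strict versus non-strict sublevel sets agree on integer points.
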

\begin{proof}
Follows from Lemmas~\ref{lem:feasibility_to_optimization} and \ref{lem:feasibility-test}.
\end{proof}

\section{Numerical Approximations and the Quadratic Formula}
\label{sec:numerical}

For a finite set $\mathcal A = \{\alpha_0 = -R, \alpha_1, \ldots, \alpha_k, \alpha_{k+1} = R\} \subseteq \R$, with $\alpha_i < \alpha_{i+1}$, $i = 0, \dots, k$, we define the set of points $X_{\mathcal A} := \{ \lfloor \alpha_i \rfloor, \lceil \alpha_i \rceil  : i=0, \dots, k+1\}$ and the set of intervals $\I_{\mathcal A} := \{[\lceil \alpha_i \rceil + 1 , \lfloor \alpha_{i+1} \rfloor - 1]: i=0, \dots, k\}$ (some of which may be empty).
Notice that $[-R,R] \cap \Z = (X_{\mathcal A} \cup \bigcup_{I \in \I_{\mathcal A}} I) \cap \Z$.  Therefore, a minimizer $\x^* \in \argmin \left\{ f(\x) : \x \in P \cap \Z^2 \right\}$ where $P \subseteq B:= [-R,R]^2$ is attained either on a set $P \cap (\{x\} \times \R)$ for some $x \in X_{\mathcal A}$ or on a set $P \cap (I \times \R)$ for some $I \in \I_{\mathcal A}$.  Solving the minimization problem on each of those sets separately and taking the minimum of all problems will solve the original minimization problem in $P \cap \Z^2$. We use this several times with $\mathcal A$ being an approximation to the roots, extreme points, or inflection points of some univariate function.

\begin{lemma}[numerical approximations]
\label{lem:numerical}
Let $p$ be a univariate polynomial of degree $d$ with integer coefficients, and suppose that its coefficients are given. Let $M$ be the sum of the absolute values of the coefficients of $p$, and let $\epsilon> 0$ be a rational number.  
\begin{enumerate}[(i)]
\item In polynomial time in $d$ and the size of $M$, we can determine whether or not $p \equiv 0$. \label{part1}
\item  Suppose $p \not\equiv 0$ and $\alpha_1, \dots, \alpha_k$ are the real roots of $p$.  Then, in polynomial time in $d$ and the size of $M$ and $\epsilon$, we can find a list of rational numbers $\tilde \alpha_1, \ldots, \tilde \alpha_k$ of $\epsilon$-approximations of $\alpha_1, \dots, \alpha_k$, that is, $|\alpha_i - \tilde \alpha_i| < \epsilon$ for $i=1,\dots,k$. \label{part2}
\item Suppose $p \not\equiv 0$ and $\alpha_1, \dots, \alpha_k$ are the distinct real roots of $p$ in increasing order.  Then, in polynomial time in $d$ and the size of $M$ and $\epsilon$,  we can determine a list of rational numbers $\tilde \alpha_1^- < \tilde \alpha_1^+ < \dots < \tilde \alpha_k^- < \tilde \alpha_k^+$ such that $\tilde \alpha^-_i <  \alpha_i < \tilde \alpha^+_i$ and $|\tilde \alpha_i^- - \tilde \alpha_i^+| < \epsilon$ for $i = 1, \dots, k$. \label{part3}
\end{enumerate}
\end{lemma}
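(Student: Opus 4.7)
Part (i) is immediate: since $p$ is presented by its coefficient list, one checks in polynomial time whether every coefficient equals zero. For part (ii), the plan is to combine a Sturm sequence with repeated bisection. First, a Cauchy-type bound shows all real roots of $p$ lie in $[-B,B]$ for $B$ of size polynomial in the size of $M$. Next, I would compute the Sturm sequence of $p$ via the Euclidean algorithm; its iterates have polynomial size and are produced in polynomial time. Then I would recursively bisect $[-B,B]$, using the Sturm sign-variation count at the endpoints to read off the number of roots in each subinterval, discarding branches with no roots and refining branches with a single root until the subinterval has length at most $\epsilon$; the midpoint of the final subinterval serves as $\tilde\alpha_i$. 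The total number of bisections is $O(k\log(B/\epsilon))$, polynomial in the inputs, and every Sturm-count evaluation involves rationals of polynomial size.

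For part (iii), the issue is to guarantee that the intervals $[\tilde\alpha_i^-,\tilde\alpha_i^+]$ are pairwise disjoint (so that the claimed ordering is forced) while each has length strictly less than $\epsilon$. My plan is: (a) pass to the squarefree part $\bar p := p/\gcd(p,p')$ via the Euclidean algorithm, so that $\bar p$ has exactly the distinct real roots of $p$, all simple, and its size is polynomially bounded; (b) invoke Mahler's root separation bound, which yields an explicit $\sigma>0$ of size polynomial in $d$ and the size of $M$ such that $|\alpha_i - \alpha_j|\ge \sigma$ whenever $i\ne j$; (c) apply part (ii) to $\bar p$ with precision $\epsilon':=\min(\epsilon/3,\sigma/4)$, obtaining approximations $\tilde\alpha_1,\dots,\tilde\alpha_k$; and (d) set $\tilde\alpha_i^\pm := \tilde\alpha_i \pm \epsilon'$. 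The choice $\epsilon'<\sigma/4$ makes the intervals pairwise disjoint and hence sorts them into the natural order of the roots, while $2\epsilon'<\epsilon$ delivers the required length bound.

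The main obstacle is quantitative: ensuring that the auxiliary numerical quantities (the Cauchy root bound $B$, the intermediate remainders in the Sturm sequence, and especially the separation bound $\sigma$ for the squarefree part) all have encoding size polynomial in $d$ and the size of $M$. These estimates are classical but require citing the correct quantitative forms; once they are in place the algorithm is a routine combination of gcd computation, Sturm-sequence evaluation, and bisection, and the complexity analysis amounts to bookkeeping.
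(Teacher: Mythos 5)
Your proposal is correct, but it proves the lemma from scratch, whereas the paper does not: the paper's entire argument for parts (ii) and (iii) is a citation to Sagraloff's work on real root isolation (which is based on Descartes'-rule/subdivision methods rather than Sturm sequences), and part (i) is the same trivial coefficient check in both. What your route buys is a self-contained, classical algorithm: Cauchy bound plus Sturm-sequence bisection for (ii), and squarefree part plus Mahler's root separation bound for (iii); the quantitative ingredients you flag as the ``main obstacle'' are indeed the only nontrivial content, and all of them are standard with polynomial-size bounds (in particular, for a squarefree integer polynomial the discriminant is a nonzero integer, so Mahler's bound gives $\sigma \geq \sqrt{3}\, d^{-(d+2)/2} \|p\|_2^{-(d-1)}$, whose encoding size is $O(d\log d + d\log M)$). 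Two small points to tidy up: in part (ii) the roots $\alpha_1,\dots,\alpha_k$ are not stated to be distinct, so if they are meant with multiplicity you should recover multiplicities (e.g.\ via repeated gcd's with derivatives) and repeat the corresponding approximations; and in the bisection you should test whether a midpoint is itself a root of $p$ before using it as a Sturm evaluation point. Neither affects correctness or the polynomial running time.
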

\begin{proof}
If all coefficients of $p$ are equal to zero, then $p \equiv 0$.  Otherwise $p \not \equiv 0$, proving part \eqref{part1}.
Parts \eqref{part2} and \eqref{part3} follow, for example, from \cite{sagraloff_computing_2013}.
\end{proof}

We use Lemma~\ref{lem:numerical} repeatedly in the following sections.  One way we will use it is in the form of the following remark.
\begin{remark}
\label{rem:numerical}
By choosing $\epsilon$ sufficiently small, for example $\epsilon = 1/4$, we can use Lemma~\ref{lem:numerical} part~\eqref{part2} to determine approximations $\mathcal A = \{\tilde \alpha_1, \dots, \tilde \alpha_k\}$ of the roots of $p$ such that no interval $I \in \I_{\mathcal A}$ contains a root of $p$. Thus, by continuity, $p$ does not change sign on each interval. Moreover, if an interval $I \in \I_{\mathcal A}$ is non-empty, we can determine whether $p$ is positive or negative on $I$ by testing a single point in the interval. 
\end{remark}

The next lemma will be crucial for proving Lemma~\ref{lem:cubic_y_deg2}.

\begin{lemma}
\label{lem:quadratics}
Let $f_0,f_1,f_2 \colon \R \to \R$ be polynomial functions in one variable of fixed degree and suppose that $f_2 \not \equiv 0$. Consider the two functions 
\begin{equation*}
y_\pm := \frac{-f_1 \pm \sqrt{\Delta}}{2f_2}
\end{equation*}
where $\Delta = f_1^2 - 4f_2f_0$.
In polynomial time, we can find a set of rational points $\mathcal A = \{\tilde \alpha_1, \dots, \tilde \alpha_k\}$ such that $y_\pm$ are well defined, continuous and either convex or concave on each $I \in \I_{\mathcal A}$. Moreover, we can determine numbers $c_\pm^I \in \{-1,1\}$ that indicate whether $y_\pm$ is convex or concave on $I \in \I_{\mathcal A}$.

\end{lemma}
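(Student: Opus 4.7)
The plan is to produce $\mathcal A$ as the union of rational approximations to the real zeros of three polynomials: $f_2(x)$, $\Delta(x)$, and a polynomial $P(x)$ whose real zeros contain every $x$ for which $y_+''(x)=0$ or $y_-''(x)=0$. On each resulting subinterval $I\in\I_{\mathcal A}$ where $f_2\neq 0$ and $\Delta>0$, the functions $y_\pm$ are then well defined, continuous, smooth, and of constant-sign second derivative, hence convex or concave.

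For the first two polynomials I would apply Lemma~\ref{lem:numerical} in the form of Remark~\ref{rem:numerical} to $f_2$ and to $\Delta$, obtaining rational approximations that isolate their real roots and fix their signs on each resulting subinterval. For the third polynomial I would derive $P$ by implicit differentiation of $F(y,x):=f_2 y^2+f_1 y+f_0=0$, which gives the identity $F_y^3\,y''=G(y,x):=-F_{xx}F_y^2+2F_{xy}F_x F_y-F_{yy}F_x^2$. Since $F_y(y_\pm,x)=\pm\sqrt{\Delta(x)}$, one has $y_\pm''=0$ iff $G(y_\pm,x)=0$ on the intervals of interest. Reducing $G$ modulo $F$ as polynomials in $y$ produces a remainder $A(x)+B(x)\,y$, so $G(y_\pm,x)=\alpha(x)\pm\beta(x)\sqrt{\Delta(x)}$ with $\alpha:=(2Af_2-Bf_1)/(2f_2)$ and $\beta:=B/(2f_2)$. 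Squaring and clearing denominators in $\alpha\pm\beta\sqrt{\Delta}=0$ yields the polynomial $P(x):=(2Af_2-Bf_1)^2-B^2\Delta$ of bounded degree, whose real zeros contain all inflection points of both $y_+$ and $y_-$; Lemma~\ref{lem:numerical} then approximates its roots. The degenerate case $P\equiv 0$, which forces $y_\pm$ either to be affine or $\sqrt{\Delta}$ to be a rational function of $x$, is checked directly.

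Finally, for the signs $c_\pm^I$ I would choose a rational test point $x_0$ in each nonempty $I\in\I_{\mathcal A}$ on which $y_\pm$ is defined and compute the sign of $y_\pm''(x_0)=\pm(\alpha(x_0)\pm\beta(x_0)\sqrt{\Delta(x_0)})/\Delta(x_0)^{3/2}$. This reduces to determining the sign of an expression of the form $p+q\sqrt{D}$ with $p,q,D\in\Q$ and $D>0$: if $p$ and $q$ are same-signed (or one vanishes) the sign is immediate, and otherwise one compares the rationals $p^2$ and $q^2 D$. The main obstacle is that $y_\pm''$ is not a rational function of $x$, so the inflection-point condition must be encoded as a polynomial identity in $x$; this is exactly what the combination of reducing $G$ modulo $F$ and squaring accomplishes. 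One must then verify that the extra roots introduced by squaring only refine $\mathcal A$ harmlessly and that the sign of $y_\pm''$ at a rational test point can be determined exactly from rational data despite the irrational value $\sqrt{\Delta(x_0)}$.
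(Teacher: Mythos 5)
Your proposal is correct and takes essentially the same route as the paper: add to $\mathcal A$ approximations of the zeros of $f_2$, of $\Delta$, and of a single polynomial obtained by eliminating $\sqrt{\Delta}$ (via squaring) from the condition $y_\pm''=0$, then fix convexity/concavity by evaluating the sign of $y_\pm''$ at a rational test point in each interval. Your derivation of that polynomial by implicit differentiation and reduction modulo $F$, your explicit rational comparison of $p^2$ with $q^2D$ to get exact signs, and your handling of the degenerate case $P\equiv 0$ (the paper instead subdivides at the zeros of the analogous product $pq$) are only minor variations on the paper's argument.
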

\begin{proof}
We start with $\mathcal A = \emptyset$. Since $f_2$ is not identically equal to zero, the number of its zeros is bounded by the degree of $f_2$. By Lemma~\ref{lem:numerical} part~\eqref{part2}, we can approximate its zeros with $\epsilon = 1/8$, which we add to the list $\mathcal A$. We do the same for the zeros of $\Delta$.

We will show the result for $y_+$ only, since the computation is analogous for $y_-$.  
Then 
$$
y'_+ =  \frac{\sqrt{\Delta} \big( f_2' f_1^{} - f_2^{}  f_1' \big)-\Delta   f_2' +\frac{1}{2} f_2^{}  \Delta' }{2 f_2^2\sqrt{\Delta}}
=  \frac{\Delta  \big( f_2' f_1^{} - f_2^{}  f_1' \big)+ (-\Delta   f_2' +\frac{1}{2} f_2^{}  \Delta')\sqrt{\Delta} }{2 f_2^2\Delta} ,
$$

\begin{equation*}
y_+'' =  \frac{-p + \sqrt{\Delta} q}{8 f_2^3 \Delta^{3/2}} ,
\end{equation*}

where 
\begin{align*}
-p &:= \Delta   \left(2 f_2^2 \Delta'' -4 f_2^{}  f_2'  \Delta' \right)+\Delta^2 \left(8 (f_2')^2-4 f_2^{}  f_2'' \right)-f_2^2 (\Delta')^2 , \\
q &:= \Delta \left(4 f_2^{}  f_1^{}  f_2'' +8 f_2^{}  f_2'  f_1' -8 f_1^{}  (f_2')^2-4 f_2^2 f_1'' \right) .
\end{align*}

Therefore, $y'' = 0$ if and only if we have
$
p = q \sqrt{\Delta}.
$
It can be checked that its solutions are exactly the solutions of 
$
p^2 = q^2\Delta
$
and 
$
pq \geq 0.
$
We can determine integer intervals where $pq \geq 0$ by computing $\epsilon$-approximations of the zeros of $pq$ using Lemma~\ref{lem:numerical} part~\eqref{part1} and part~\eqref{part2}.  Note that if $pq \equiv 0$, then there is just one interval, which is $\R$.  Moreover, we can determine whether $p^2 - q^2 \Delta \equiv 0$. If it is, then we add the approximations of the zeros of $pq$ to $\mathcal A$.
Otherwise, we compute $\epsilon$-approximations of the zeros of $p^2 - q^2 \Delta$ and add those to $\mathcal A$.
Finally, we can determine the convexity or concavity of $y_+$ on each non-empty interval $I \in  \I_{\mathcal A}$ by evaluating the sign of $y_+''$ on an point of this interval.
\end{proof}

In the absence of exact computation of irrational roots, we must make up for the error.  In Sections~\ref{sec:cubic} and \ref{sec:homogeneous} we will use our numerical approximations to construct thin boxes containing irrational lines.
\begin{lemma}
\label{lem:line}
Let $K \subseteq \R^2$ be a polytope with $\vol(K) < \tfrac12$.  Then $\dim(\conv(K \cap \Z^2)) \leq 1$ and in polynomial time we can determine a line containing all the integer points in $K$.
\end{lemma}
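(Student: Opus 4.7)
The plan is to separate the two claims: the dimension bound is purely combinatorial via Pick's theorem, while the algorithmic part follows from the polynomial-time integer hull algorithm of \cite{cook_integer_1992,hartmann-1989-thesis} already used in Lemma~\ref{lem:feasibility-test}.

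For the structural claim, set $Q := \conv(K \cap \Z^2)$ and suppose for contradiction that $\dim(Q) = 2$. Then $Q$ is a two-dimensional lattice polygon with $Q \subseteq K$, so $\vol(Q) \leq \vol(K) < 1/2$. However, any two-dimensional lattice polygon has at least three vertices, each an integer boundary point. Writing $i$ for the number of interior and $b$ for the number of boundary integer points of $Q$, Pick's theorem gives
\[
\vol(Q) \;=\; i + \tfrac{b}{2} - 1 \;\geq\; 0 + \tfrac{3}{2} - 1 \;=\; \tfrac{1}{2},
\]
a contradiction. Hence $\dim(Q) \leq 1$.

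For the algorithmic claim, I would compute the vertex description of $Q$ in polynomial time using the integer hull algorithm. By the previous paragraph $Q$ is empty, a single integer point, or a segment between two integer points, so a line containing $K \cap \Z^2$ can be read off directly from the at most two vertices (in the degenerate cases, any line through the unique integer point, or any line at all, satisfies the conclusion).

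The only subtlety worth flagging is that the integer hull algorithm presupposes a rational polytope. In the invocations of this lemma in Sections~\ref{sec:cubic} and \ref{sec:homogeneous}, however, $K$ will be built as the intersection of the rational box $B$ with a rational thin strip around a numerically approximated line, so rationality is automatic. Beyond Pick's theorem and the cited integer hull machinery, no extra ingredients are needed.
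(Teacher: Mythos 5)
Your proof is correct, but it takes a genuinely different route from the paper on both halves of the statement. For the dimension bound, the paper simply cites it as a well-known fact (pointing to \cite{oertel_mirror_2013}), whereas you give a short self-contained argument via Pick's theorem; your inequality $\vol(Q) = i + b/2 - 1 \geq 1/2$ for a two-dimensional lattice polygon is exactly right and makes the lemma independent of that reference. For the algorithmic part, the paper does not compute the integer hull at all: it makes one call to Lenstra's algorithm to find some $\bar \x \in K \cap \Z^2$ (returning an arbitrary line if none exists), then splits $K$ into $K \cap \{x_1 \leq \bar x_1 - 1\}$ and $K \cap \{x_1 \geq \bar x_1 + 1\}$ and uses two further Lenstra calls; a second integer point yields the line $\aff\{\bar\x,\tilde\x\}$, and otherwise the vertical line $x_1 = \bar x_1$ works. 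Your route through the integer hull of \cite{cook_integer_1992,hartmann-1989-thesis} is heavier machinery but conceptually cleaner, since the dimension bound guarantees at most two vertices from which the line is read off directly; the paper's approach buys a lighter toolset (only Lenstra, already needed elsewhere) at the cost of the small case analysis around $x_1 = \bar x_1$. Your remark about rationality of $K$ is well taken and applies equally to the paper's proof, since Lenstra's algorithm likewise needs a rational description, which the constructions in Sections~\ref{sec:cubic} and \ref{sec:homogeneous} indeed provide.
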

\begin{proof}
The fact that $\dim(\conv(K \cap \Z^2)) \leq 1$ is a well known result.  See, for example, \cite{oertel_mirror_2013} for a proof.  Using Lenstra's algorithm~\cite{lenstra_integer_1983}, in polynomial time we can either find an integer point $\bar \x \in K \cap \Z^2$, or determine that no such point exists.  If no point exists, then return any line.  Otherwise, let $K_1 = K \cap \{ \x : x_1 \leq \bar x_1 - 1\}$ and $K_2 = K \cap \{ x : x_1 \geq \bar x_1 + 1\}$, and use Lenstra's algorithm twice to detect integer points in the sets $K_1 \cap \Z^2$ and $K_2 \cap \Z^2$.  If an integer point $\tilde \x \in \left( K_1 \cup K_2 \right) \cap \Z^2$ is detected, then return the line given by the affine hull of $\{\bar \x, \tilde \x\}$.  Otherwise return the line $\{ \x : x_1 = \bar x_1\}$.
\end{proof}

\section{Cubic Polynomials}
\label{sec:cubic}

In this section we will prove that Problem~\eqref{eq:main} is solvable in polynomial time for $n=2$ and $d=3$ when $P$ is bounded.  For the rest of this section, let $f(x,y)$ be a bivariate cubic polynomial.
We represent $f(x,y)$ in terms of  $y$ as
$$
f(x,y) = \sum_{i=0}^3 f_i(x) y^i = f_0(x) + f_1(x)y + f_2(x) y^2 + f_3(x) y^3.
$$
Let $\deg_y(f)$ denote the maximum index $i$ such that $f_i$ is not the zero polynomial.
Given a similar representation in terms of $x$, we can similarly define $\deg_x(f)$.  Without loss of generality, we can assume that $\deg_x(f) \geq \deg_y(f)$.  
We will consider each case $\deg_y(f) = 0, 1, 2, 3$ separately.

\begin{theorem}
\label{thm:polynomial_root_bounds}
Let $m < n$ be nonnegative integers, $a_m, \ldots, a_n \in \R$, $a_m \ne 0$, $a_n \ne 0$, and let $\bar x \in \R$ be a nonzero root of the polynomial $f(x) := \sum_{i=m}^n a_i x^i$. Then
\begin{equation}
\min \{ |a_m|/(|a_m| + |a_i|) \, : \, i = m+1, \ldots, n \} < |\bar x| < 1 + \max \{ |a_i/a_n| \, : \, i = m, \ldots, n-1 \} .
\end{equation}
\end{theorem}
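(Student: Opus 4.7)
The statement consists of two inequalities: an upper bound on $|\bar x|$ (a classical Cauchy-type root bound) and a lower bound. My plan is to prove the upper bound first by a standard triangle-inequality argument, and then reduce the lower bound to the upper bound by passing to the reciprocal polynomial.

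For the upper bound, I would let $M := \max\{|a_i/a_n| : i = m, \ldots, n-1\}$ and argue: if $|\bar x| \le 1$ the bound $|\bar x| < 1 + M$ is immediate (note $M \ge 0$), so assume $|\bar x| > 1$. From $f(\bar x) = 0$ we get $a_n \bar x^n = -\sum_{i=m}^{n-1} a_i \bar x^i$, hence
\begin{equation*}
|a_n|\,|\bar x|^n \;\le\; \sum_{i=m}^{n-1} |a_i|\,|\bar x|^i \;\le\; M|a_n| \sum_{i=0}^{n-1} |\bar x|^i \;=\; M|a_n|\,\frac{|\bar x|^n - 1}{|\bar x| - 1} \;<\; M|a_n|\,\frac{|\bar x|^n}{|\bar x| - 1}.
\end{equation*}
Dividing by $|a_n|\,|\bar x|^n$ and rearranging gives $|\bar x| - 1 < M$, which is the desired upper bound.

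For the lower bound, I would exploit the fact that $\bar x \neq 0$ is a root of the truncated polynomial $g(x) := f(x)/x^m = \sum_{j=0}^{n-m} a_{m+j} x^j$, whose leading coefficient $a_n$ and constant term $a_m$ are both nonzero. Then $1/\bar x$ is a nonzero root of the reverse polynomial
\begin{equation*}
\tilde g(y) \;:=\; y^{n-m} g(1/y) \;=\; \sum_{j=0}^{n-m} a_{m+j}\, y^{n-m-j},
\end{equation*}
whose leading coefficient is $a_m$ and whose other coefficients (up to the constant term) are $a_{m+1}, \ldots, a_n$. Applying the upper bound already proved to $\tilde g$ yields
\begin{equation*}
\left|\frac{1}{\bar x}\right| \;<\; 1 + \max\!\left\{ \frac{|a_i|}{|a_m|} : i = m+1, \ldots, n \right\},
\end{equation*}
and taking reciprocals gives $|\bar x| > |a_m|/(|a_m| + \max\{|a_i| : i = m+1, \ldots, n\})$. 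Since $|a_m|/(|a_m| + t)$ is a decreasing function of $t \ge 0$, this last expression equals $\min\{|a_m|/(|a_m|+|a_i|) : i = m+1, \ldots, n\}$, matching the stated bound.

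I don't anticipate any real obstacle; both bounds are standard consequences of the triangle inequality and the reciprocal-polynomial trick, and the only point requiring care is the bookkeeping of which coefficients appear as "leading" and "constant" after reversal — handled cleanly by first dividing out the factor $x^m$, which is where the hypotheses $a_m \ne 0$ and $a_n \ne 0$ are used.
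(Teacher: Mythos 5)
Your proof is correct, and it differs from the paper in one respect: the paper does not prove the upper bound at all but simply invokes Rouch\'e's theorem, citing Theorem~(27,2) of Marden's book, and then obtains the lower bound by the reversal $g(x) := x^n f(1/x)$ — which, note, is exactly your $\tilde g$, since $x^n f(1/x) = \sum_{i=m}^n a_i x^{n-i}$ has degree $n-m$ with leading coefficient $a_m$ and constant term $a_n$; your intermediate step of first dividing out $x^m$ is just a cleaner way of presenting the same reversal. So your lower-bound argument coincides with the paper's, while your upper-bound argument replaces the citation with a short, self-contained Cauchy-bound computation via the triangle inequality and the geometric series; this buys a fully elementary proof at the cost of a few lines, which is arguably preferable in a paper that otherwise leans on the bound quantitatively. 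One small point to tighten: in the case $|\bar x| \le 1$ the strict inequality $|\bar x| < 1 + M$ needs $M > 0$, not merely $M \ge 0$; this does hold here because $a_m \ne 0$ and $m \le n-1$, so $|a_m/a_n|$ is a positive element of the set defining $M$, but you should say so explicitly rather than noting only $M \ge 0$.
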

\begin{proof}
Follows from Rouché's theorem. See, for example, Theorem~(27,2) in \cite{marden_geometry_1949} for the second inequality. The first inequality can be obtained from the second one by considering the polynomial $g(x) := x^n f(1/x)$.
\end{proof}

\begin{definition}
A bivariate polynomial is called \emph{affinely critical} if the set of critical points, i.e., points where the gradient vanishes, is a finite union of affine spaces---i.e., all of $\R^2$, lines, or points.
\end{definition}

\begin{lemma}
\label{lem:cubic-easy}
All cubic polynomials in two variables are affinely critical. 
\end{lemma}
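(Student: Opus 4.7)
The plan is to analyze the common zero set of the partial derivatives $f_x$ and $f_y$, both polynomials of degree at most $2$, by case analysis on $g := \gcd(f_x, f_y)$ in the unique factorization domain $\R[x,y]$.

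First I would dispose of the degenerate situations where a partial derivative vanishes identically. If $f_x \equiv f_y \equiv 0$, then $f$ is constant and the critical set is all of $\R^2$. If exactly one partial vanishes, say $f_x \equiv 0$, then $f$ depends only on $y$ and $f_y$ is a nonzero univariate polynomial of degree at most $2$, so its zero set in $\R^2$ is a union of at most two horizontal lines. From here on, assume both partials are nonzero and split into three subcases according to $\deg g \in \{0,1,2\}$.

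In the subcase $\deg g = 0$, the polynomials $f_x$ and $f_y$ are coprime, so by B\'ezout's theorem they share at most $2 \cdot 2 = 4$ common zeros in $\R^2$, giving a finite union of points. In the subcase $\deg g = 1$, I would write $f_x = g\,p$ and $f_y = g\,q$ with $\gcd(p,q) = 1$ and $\deg p, \deg q \le 1$, so the critical set decomposes as $\{g = 0\} \cup \bigl(\{p = 0\} \cap \{q = 0\}\bigr)$: a single line joined with either the empty set (when one of $p, q$ is a nonzero constant) or a single point (when both are linear, since coprimality forces non-proportionality, and non-proportional linear forms in two variables meet at a unique point).

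The main obstacle is the remaining subcase $\deg g = 2$, where a priori the zero set of the common quadratic factor could be an arbitrary conic that does not split into affine pieces. Here $f_x$ and $f_y$ are nonzero scalar multiples of $g$, so $f_y = \lambda f_x$ for some $\lambda \in \R \setminus \{0\}$. I would apply the method of characteristics to the first-order linear PDE $f_y - \lambda f_x = 0$ to deduce that $f(x,y) = F(x + \lambda y)$ for some univariate cubic polynomial $F$; equivalently, after the invertible linear change of variables $u = x + \lambda y$, $v = y$, the condition $f_y = \lambda f_x$ forces $\partial f / \partial v = 0$. Then $F'$ is quadratic with at most two real roots, so the critical set $\{F'(x + \lambda y) = 0\}$ is a union of at most two parallel lines. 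Since every subcase produces a finite union of affine subspaces, the lemma follows.
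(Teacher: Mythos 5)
Your proof is correct, and at its core it rests on the same observation as the paper's: the only troublesome case is when $f_x$ and $f_y$ are proportional, and there $f$ collapses to a univariate cubic in a linear form, whose critical set is a union of at most two parallel lines (or empty). The organization of the remaining cases, however, is genuinely different. The paper argues geometrically with the two conics $\{f_x=0\}$ and $\{f_y=0\}$ (one of them a line, or two distinct conics meeting in at most four points, or identical conics, in which case it equates coefficients to write $f$ as a cubic in $ax+y$), whereas you stratify by $\deg\gcd(f_x,f_y)$ in $\R[x,y]$ and handle the proportional case by a linear change of variables (characteristics) rather than coefficient matching. Your organization is arguably tighter: the intermediate case $\deg g = 1$, where the two quadratics are distinct but share a common linear factor, is treated explicitly via your decomposition $\{g=0\}\cup(\{p=0\}\cap\{q=0\})$, while the paper's ``distinct conics meet in at most four points'' passes over this possibility. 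One small inaccuracy on your side: in the $\deg g = 1$ case, two coprime affine-linear polynomials need not meet at a point --- they can define parallel disjoint lines (e.g.\ $x$ and $x+1$) --- but then the intersection is empty, so the conclusion that the critical set is a line together with a finite set is unaffected.
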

\begin{proof}
Consider a cubic polynomial $f(x, y)$ in two variables. Since it has degree at most three, both components of its gradient have degree at most two. Thus the gradient vanishes on the intersection of two conic sections (i.e., quadrics in the Euclidean plane). If one of the conic sections is a line, then its intersection with the other conic section is either a line or a finite number of points. Thus suppose that neither of the two conics is a single line. If the two conic sections are distinct, then their intersection consists of at most four distinct points. Therefore suppose that they are not distinct, which happens when $f_x = a f_y$ for some $a \in \R$, where $f_x$, $f_y$ are the derivatives of $f$ with respect to $x$ and $y$ respectively. By equating coefficients, a straightforward calculation shows that  
\begin{equation*}
f(x, y) = c_3 (a x+y)^3 + c_2 (a x+y)^2 + c_1 (a x+y) + c_0,
\end{equation*}
where $c_0, c_1, c_2, c_3$ are a subset of the coefficients of the original polynomial. The gradient of $f$ vanishes if and only if 
\begin{equation}
\label{eq:c-equation}
3 c_3 (a x+y)^2 + 2 c_2 (a x+y) + c_1 = 0 .
\end{equation}
If $c_3 = c_2 = 0$, then this is either the empty set or all of $\R^2$, depending on whether $c_1 \ne 0$ or $c_1 = 0$. If $c_3 = 0$ and $c_2 \ne 0$, then equation~\eqref{eq:c-equation} reduces to the line $a x + y = - \frac{c_1}{2 c_2}$. Finally, if $c_3 \ne 0$, then the gradient vanishes if and only if 
\begin{equation*}
a x + y = - \frac{c_2 \pm \sqrt{c_2^2 - 3 c_1 c_3}}{3 c_3} ,
\end{equation*}
which is either the union of two real lines, or is not satisfied by any real points, depending on whether $c_2^2 - 3 c_1 c_3 \ge 0$ holds or not.
\end{proof}

We now start with showing that Problem~\eqref{eq:main} can be solved in polynomial time if $\deg_y(f) = 0$.

\begin{lemma}
\label{lem:cubic_y_deg0}
Suppose $deg_y(f) = 0$. Then we can solve Problem~\eqref{eq:main} in polynomial time.
\end{lemma}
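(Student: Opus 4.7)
The plan is to invoke Theorem~\ref{thm:dd-min}, which reduces the problem to constructing, for each $\omega \in \Z + \tfrac{1}{2}$, a division description of $S^f_{\le \omega}$ on $B = [-R,R]^2$ in polynomial time. Since $\deg_y(f) = 0$, the polynomial has the form $f(x,y) = f_0(x)$ for a univariate $f_0$ of degree at most three, so the sublevel set $S^f_{\le \omega} = \{x \in \R : f_0(x) \le \omega\} \times \R$ is a union of at most two vertical strips, determined by the at most three real roots of $f_0(x) - \omega$.

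First, I would clear denominators in $f_0(x) - \omega$ to obtain a polynomial with integer coefficients of polynomial size, then apply Lemma~\ref{lem:numerical}\eqref{part2} together with Remark~\ref{rem:numerical} to compute a set $\mathcal A = \{\tilde\alpha_1,\dots,\tilde\alpha_k\}$ of rational approximations of its real roots, with $\epsilon$ small enough (say $\epsilon = 1/4$) that $f_0(x) - \omega$ does not change sign on any integer interval $I \in \I_{\mathcal A}$. Since $\deg f_0 \le 3$, we have $k \le 3$, so $|X_{\mathcal A}|$ and $|\I_{\mathcal A}|$ are constants.

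Next, I would build the division description as follows. For each non-empty interval $I \in \I_{\mathcal A}$, let $R_I := I \times [-R, R]$, a rational polytope on which the sign of $f_0 - \omega$ is constant. I determine this sign by evaluating $f_0 - \omega$ at a single integer point of $I$. If $f_0 \le \omega$ on $I$, I include $R_I$ in the list of $P_i$'s, where condition (i) of Definition~\ref{def:division-description} holds because $P_i \cap S^f_{\le \omega} = R_I$ is convex. If $f_0 > \omega$ on $I$, I include $R_I$ in the list of $Q_j$'s, where condition (ii) holds because $Q_j \setminus S^f_{\le \omega} = R_I$ is convex. For each $x \in X_{\mathcal A}$, I include the rational vertical line $L_x = \{(x,y) : y \in \R\}$ among the $L_k$. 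The coverage condition (iii) follows from the identity $[-R,R]\cap \Z = X_{\mathcal A} \cup \bigcup_{I \in \I_{\mathcal A}}(I \cap \Z)$, which is built into the definitions of $X_{\mathcal A}$ and $\I_{\mathcal A}$ recalled in Section~\ref{sec:numerical}.

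Applying Theorem~\ref{thm:dd-min} then finishes the proof. There is essentially no obstacle in this case: because $f$ has no $y$-dependence, the sublevel sets are disjoint unions of vertical strips whose boundaries may be irrational, and the two or three irrational boundary lines per root are absorbed into the constantly many lines $L_k$ of the division description, with the strips strictly between approximations being rational and sign-constant. The genuine difficulties appear only in the cases $\deg_y(f) \ge 1$, treated in the subsequent lemmas.
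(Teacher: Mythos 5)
Your proof is correct, but it takes a different route from the paper's. The paper does not go through the division-description/feasibility machinery for this case at all: it observes that $f_0$ is monotone between (approximations of) the critical points of $f_0$, i.e.\ the at most three zeros of $f_0'$, and then optimizes directly --- on each vertical line $\{x\}\times\R$ with $x \in X_{\mathcal A}$ it invokes the one-dimensional result of Theorem~\ref{thm:dimension1}, and on each strip $I\times\R$ with $I \in \I_{\mathcal A}$ it reduces the problem to the integer linear program $\min/\max\{x : (x,y)\in P\cap\Z^2,\ x\in I\}$, since the optimum of a monotone $f_0$ over the strip is attained at an extreme feasible $x$-value. Your version instead fixes a half-integer level $\omega$, clears denominators in $f_0 - \omega$ (correctly noting it cannot vanish identically since $\omega\in\Z+\tfrac12$), and packages the sign-constant strips and the approximation lines into a division description fed to Theorem~\ref{thm:dd-min}. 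Both arguments are valid and polynomial time. Yours has the virtue of uniformity with the $\deg_y(f)\ge 1$ cases, which all proceed via division descriptions; the paper's is more direct, avoids the binary search over $\omega$ entirely (one set of critical-point approximations serves all levels, rather than recomputing roots of $f_0-\omega$ for each $\omega$), and produces a minimizer rather than repeatedly answering feasibility queries.
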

\begin{proof}
The possible extreme points of the one-dimensional function $f_0$ correspond to the zeros of its first derivative, say $\alpha_1, \ldots, \alpha_k$ where $k \leq 3$.  By Remark~\ref{rem:numerical}, we can determine a list of approximations $\mathcal A = \{\tilde \alpha_1, \dots, \tilde \alpha_k\}$ and define $\I_{\mathcal A}$ and $X_{\mathcal A}$ as in Section~\ref{sec:numerical}. We then solve the problem on each restriction of $P$ to $\{x\} \times \R$ for each $x \in X_{\mathcal A}$ using Theorem~\ref{thm:dimension1}, since this problem is one dimensional.   For each interval $I \in \I_{\mathcal A}$, $f_0(x)$ is either increasing or decreasing in $x$.  Therefore, the optimal solution restricted to the interval is an optimal solution to one of the problems $\min/\max \{ x : (x, y) \in P \cap \Z^2, x \in I \}$.  These problems are just integer linear programs in fixed dimension that are well known to be polynomially solvable (see Scarf~\cite{Scarf1981,Scarf1981a} or Lenstra~\cite{lenstra_integer_1983}).  Since $|\I_{\mathcal A}| \le 3$, the algorithm takes polynomial time.
\end{proof}

For the remaining cases, we solve the feasibility problem instead and rely on Lemma~\ref{lem:feasibility_to_optimization}  to solve the corresponding optimization problem. Moreover, we only need to find a division description for $S_{\leq \omega}^f$, because then we can solve the feasibility problem by Theorem~\ref{thm:dd-min}.

\begin{lemma}
\label{lem:cubic_y_deg1}
Suppose $deg_y(f) = 1$.  For any $\omega \in \Z+\tfrac{1}{2}$, we can find a division description for $S_{\leq \omega}^f$ on $B$ in polynomial time.
\end{lemma}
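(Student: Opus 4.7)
Since $\deg_y(f) = 1$, we write $f(x,y) = f_0(x) + f_1(x)\,y$ with $f_1 \not\equiv 0$. On any vertical line $\{x\}\times\R$ with $f_1(x)\neq 0$, the inequality $f(x,y) \le \omega$ is equivalent to $y \le g(x)$ if $f_1(x) > 0$ and to $y \ge g(x)$ if $f_1(x) < 0$, where $g(x) := (\omega - f_0(x))/f_1(x)$. The structural fact I exploit is that on a vertical strip where $f_1$ has constant sign and $g$ has constant convexity (convex or concave), exactly one of $S^f_{\le \omega}$ and $S^f_{> \omega}$ intersects the strip in a convex set: the intersection is either the hypograph of a concave function or the epigraph of a convex function over that strip.

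The plan is first to identify such strips. By the quotient rule, $g'' = N/f_1^3$ where
\[
N := h'' f_1^2 - h f_1'' f_1 - 2 h' f_1 f_1' + 2 h (f_1')^2 \quad\text{with}\quad h := \omega - f_0 ,
\]
a polynomial in $x$ of degree at most $5$. Therefore $\mathrm{sign}(g''(x)) = \mathrm{sign}(N(x))\cdot\mathrm{sign}(f_1(x))$ wherever $f_1(x)\neq 0$. I apply Lemma~\ref{lem:numerical}(i) to detect whether $N \equiv 0$: if so, $g$ is affine on each connected component of $\{f_1 \neq 0\}$ and either convexity label may be assigned. Otherwise, using Lemma~\ref{lem:numerical}(ii) combined with Remark~\ref{rem:numerical} (taking $\epsilon = 1/4$), I compute rational approximations of the real roots of $f_1 \cdot N$ and set $\mathcal{A}$ to be these approximations together with $-R$ and $R$. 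By Remark~\ref{rem:numerical}, no interval $I \in \I_{\mathcal{A}}$ contains a root of $f_1$ or of $N$, so both have constant sign on $I$, and hence so does $g''$: $g$ has constant convexity on each such $I$.

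Now I assemble the division description. For each non-empty interval $I \in \I_{\mathcal{A}}$, form the rational polyhedron $P_I := I \times [-R, R]$ and evaluate the signs of $f_1$ and $g''$ at a single interior sample point of $I$. In the cases $(f_1>0,\ g \text{ concave})$ and $(f_1<0,\ g \text{ convex})$, the set $P_I \cap S^f_{\le \omega}$ is convex and I add $P_I$ to the list of $P_i$'s; in the two symmetric cases, $P_I \cap S^f_{> \omega}$ is convex and I add $P_I$ to the list of $Q_j$'s. For each integer $x_0 \in X_{\mathcal{A}}$, I add the rational vertical line $\{(x,y) : x = x_0\}$ to the list of $L_k$'s. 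By the covering identity $[-R,R]\cap\Z = (X_{\mathcal{A}} \cup \bigcup_{I\in\I_{\mathcal{A}}} I)\cap\Z$ recalled in Section~\ref{sec:numerical}, conditions (i)--(iii) of Definition~\ref{def:division-description} are satisfied.

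The main subtleties are the degenerate cases $N \equiv 0$ and $f_1$ of lower than generic degree, both handled uniformly by the identically-zero test of Lemma~\ref{lem:numerical}(i). Since $f_1$ and $N$ have bounded degree, the approximations and the associated constant number of strips and lines are produced in polynomial time.
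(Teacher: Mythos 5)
Your proposal is correct and follows essentially the same route as the paper: write the level curve as $y = (\omega - f_0(x))/f_1(x)$, isolate the roots of $f_1$ and of the numerator $N$ of the second derivative via Lemma~\ref{lem:numerical} with $\epsilon = 1/4$ (handling $N \equiv 0$ separately), and take the resulting vertical strips and lines as the division description. Your treatment is in fact slightly more explicit than the paper's in tracking the sign of $f_1$ to decide whether each strip goes into the $P_i$ or the $Q_j$ list, which the paper leaves implicit.
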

\begin{proof}
Since $f_1 \not \equiv 0$, apply Lemma~\ref{lem:numerical} part~\eqref{part2} to find approximate roots $\tilde \alpha_1, \dots, \tilde \alpha_k$ of $f_1(x) = 0$ with $k \leq 2$ and an approximation guarantee of $\epsilon = 1/4$.  Hence, for all intervals $I \in \I_{\mathcal A}$, we know that $f_1(x) \neq 0$ $\forall x \in I$.  We now consider solutions $(x,y) \in S_{=\omega}^f$ and see that we can write $y$ as a function of $x$ by rewriting  $f(x, y) = \omega$.  We denote this function by $\yy$ and compute it and its derivatives $\yprime, \yprimeprime$ with respect to $x$. 

\begin{align*}
\yy(x) &= \frac{\omega - f_0(x)}{f_1(\xx)},\\
\yprime(x) &=  \frac{\left(f_0(\xx)-\omega \right) f_1'(\xx)-f_1(\xx) f_0'(\xx)}{f_1(\xx){}^2},\\
\yprimeprime(x) &=  \frac{f_1(\xx) \left(\left(f_0(\xx)-\omega \right) f_1''(\xx)+2 f_0'(\xx) f_1'(\xx)\right)+2 \left(\omega -f_0(\xx)\right) f_1'(\xx){}^2+f_1(\xx){}^2 \left(-f_0''(\xx)\right)}{f_1(\xx){}^3}.
\end{align*}
Let $N(\xx)$ be the numerator of $\yprimeprime$, so $\yprimeprime= N(\xx) / (f_1(\xx)^3)$.  Using Lemma~\ref{lem:numerical} part~\eqref{part1}, we can check whether $N(\xx) \equiv 0$.

\textbf{\underline{Case 1:} $N(\xx) \equiv 0$.}
If $N(\xx) \equiv 0$, then $\yprime$ is constant, and hence $\yy$ is an affine function on each interval $I \in \I_{\tilde \alpha}$. Since $\deg_y(f) = 1$, the sublevel set is either the epigraph or the hypograph of the affine function $y_*$.  Thus, $X_{\mathcal A}$ and $\I_{\mathcal A}$ yield a division description.

\textbf{\underline{Case 2:} $N(x)\not\equiv 0$.}
We use Lemma~\ref{lem:numerical} part~\eqref{part2} to find $1/4$-approximations $\mathcal B = \{\tilde \beta_1, \dots, \tilde \beta_{k'}\}$ of the roots of $N(x) = 0$. The division description is then given by $X_{\mathcal A \cup \mathcal B}$ and $\I_{\mathcal A \cup \mathcal B}$, since the curve has no inflection points on these intervals.
\end{proof}

A crucial tool for the next lemma is the following consequence of Bézout's theorem.

\begin{remark}
\label{rem:three-intersections}
Let $f(x,y)$ be a cubic polynomial and let $L = \{(x,y) : a x + b y + c = 0\}$ be any line with either $a \ne 0$ or $b \ne 0$.  Then either $L$ is contained in the level set $S_{= \omega}^f$, or they intersect at most three times.  When $b \ne 0$ ($a \ne 0$ is analogous), this is because $f(x, - \frac{ax + c}{b})$ is a cubic polynomial in $x$, which is either the zero polynomial, or has at most three zeros. 
\end{remark}

\begin{figure}
\centering
\begin{subfigure}[b]{0.23\textwidth}
\centering
\begin{tikzpicture}[scale=0.4,font = \LARGE,
declare function={ a(\x)  = 8+4*\x; 
                              b(\x) = -32;
                              c(\x) = \x^3 - 4*\x^2 - 16*\x + 32;
                              yplus(\x) = (-1*b(\x) + sqrt(b(\x)*b(\x) - 4*a(\x)*c(\x)))       /(2*a(\x));
                              yminus(\x) = (-1*b(\x) - sqrt(b(\x)*b(\x) - 4*a(\x)*c(\x)))/(2*a(\x)); } ]
                              
\begin{axis}[no markers,domain=-2:5,samples=100,xmin=-1,xmax=5,ymin=-2,ymax=+4]

\path[fill,black!50!white, opacity=0.1]  (axis cs:1,-4) rectangle (axis cs:4,4);

\addplot[black, mark options={solid}] coordinates {(1,1)[$(\ell, \tilde y_\ell)$] (4,0)[$(u, \tilde y_u) $]};
\addplot[blue,thick] {yplus(x)};
\addplot[blue,thick] {yminus(x)};

\node at (axis cs:3,2.6) {$y_+$};
\node at (axis cs:3,-1.3) {$y_-$};
\node[circle,fill,inner sep=2pt] at (axis cs:1,1) {};
\node[label={75:{$(\ell, \tilde y_\ell)$}}] at (axis cs:1,.8) {};
\node[label={90:{$(u, \tilde y_u)$}},circle,fill,inner sep=2pt] at (axis cs:4,0) {};

\end{axis}

\end{tikzpicture}
\caption{}
\end{subfigure}
\begin{subfigure}[b]{0.23\textwidth}
\centering
\begin{tikzpicture}[scale=0.4, font = \Large,
declare function={  a(\x)  = -20*\x + 80; 
                              b(\x) = 0;
                              c(\x) = 3*\x^3 - 8*\x^2 + 82*\x + 336;
                              yplus(\x) = (-1*b(\x) + sqrt(b(\x)*b(\x) - 4*a(\x)*c(\x)))       /(2*a(\x));
                              yminus(\x) = (-1*b(\x) - sqrt(b(\x)*b(\x) - 4*a(\x)*c(\x)))/(2*a(\x)); } ]
\begin{axis}[no markers,samples=300,xmin=4.25,xmax=11,ymin=-8,ymax=+8]
\path[fill,black!50!white, opacity=0.1]  (axis cs:6,-8) rectangle (axis cs:10,8);

\path [fill,red!80!white, opacity = 0.1] (axis cs:7,-8) -- (axis cs:7,8) -- (axis cs:8,8) -- (axis cs:8,-8);
\tikzvline[color=red,thick]{8}
\tikzvline[color=red,thick]{7}

\addplot[blue,thick, domain=4.25:11] {yplus(x)};
\addplot[blue,thick, domain=4.25:11] {yminus(x)};

\node at (axis cs:5,5) {$y_+$};
\node at (axis cs:5,-5) {$y_-$};

\node[label={60:{$(x^*, y_+(x^*))$}},circle,fill,inner sep=2pt] at (axis cs:7.7882,5.01646) {};
\node[label={300:{$(x^*, y_-(x^*))$}},circle,fill,inner sep=2pt] at (axis cs:7.7882,-5.01646) {};

\addplot[black, mark options={solid}] coordinates {(8,.3) (10,.7)};
\node[circle,fill,inner sep=2pt] at (axis cs:10,.7) {};
\addplot[black, mark options={solid}] coordinates {(7,-2) (6,-1.8)};
\node[label={270:{$(\hat u, \tilde y_{\hat u})$}},circle,fill,inner sep=2pt] at (axis cs:7,-2) {};

\tikzvline[color=black,thick]{-2.84}
\tikzvline[color=black,thick]{3.2}
\end{axis}
\end{tikzpicture}
\caption{}
\end{subfigure}
\begin{subfigure}[b]{0.23\textwidth}
\centering
\begin{tikzpicture}[scale=0.4,font = \Large,
declare function={ a(\x)  = 4*\x - 1; 
                              b(\x) = 32;
                              c(\x) = -2*\x^2 - 16*\x - 32;
                              yplus(\x) = (-1*b(\x) + sqrt(b(\x)*b(\x) - 4*a(\x)*c(\x)))       /(2*a(\x));
                              yminus(\x) = (-1*b(\x) - sqrt(b(\x)*b(\x) - 4*a(\x)*c(\x)))/(2*a(\x)); } ]
\begin{axis}[no markers,samples=100,xmin=1,xmax=10,ymin=-10,ymax=5]
\path[fill,black!50!white, opacity=0.1]  (axis cs:2,-10) rectangle (axis cs:8,5);

\path [fill,red!80!white, opacity = 0.1] (axis cs:2,1) -- (axis cs:8,1.8) -- (axis cs:8,3.1) -- (axis cs:2,2.3);
\draw (axis cs:2,1.65) -- (axis cs:8,2.575);
\draw[dashed,red] (axis cs:2,1) -- (axis cs:8,1.8);
\draw[dashed,red]  (axis cs:2,2.3) -- (axis cs:8,3.1);

\addplot[blue,thick, domain=1:10] {yplus(x)};
\addplot[blue,thick, domain=1:10] {yminus(x)};

\node at (axis cs:9,3.8) {{\large $y_+$}};
\node at (axis cs:9,-5) {$y_-$};

\node[circle,fill,inner sep=2pt] at (axis cs:2,1.652) {};
\node[label={270:{$(\ell, y_+(\ell))$}}] at (axis cs:2.5,1.652) {};
\node[label={270:{$(u, y_+(u))$}},circle,fill,inner sep=2pt] at (axis cs:8,2.57526) {};

\end{axis}
\end{tikzpicture}
\caption{}
\end{subfigure}
\begin{subfigure}[b]{0.23\textwidth}
\centering
\begin{tikzpicture}[scale=0.4,font = \Large]

\begin{axis}[no markers,samples=100,xmin=-4.8,xmax=4.8,ymin=-1.5,ymax=1.5]
\path[fill,black!50!white, opacity=0.1]  (axis cs:-4,-4) rectangle (axis cs:4,4);

\path [fill,red!80!white, opacity = 0.1] (axis cs:-4,0.1) -- (axis cs:4,0.1) -- (axis cs:4,-0.1) -- (axis cs:-4,-0.1);
\draw (axis cs:-4,0) -- (axis cs:4,0);
\draw[dashed,red] (axis cs:-4,0.1) -- (axis cs:4,0.1);
\draw[dashed,red]  (axis cs:-4,-0.1) -- (axis cs:4,-0.1);

\addplot[blue,thick, domain=-5:5] {-1*(x/4+1)*(x/4-1)-1};
\addplot[blue,thick, domain=-5:5] {-1*(x/4+1)*(x/4-1)};

\node at (axis cs:-3,.8) {{ $y_+$}};
\node at (axis cs:-3,-1) {$y_-$};

\node[circle,fill,inner sep=2pt] at (axis cs:-4,0) {};
\node[label={90:{$(\ell, y_+(\ell))$}}] at (axis cs:-3.2,0) {};

\node[circle,fill,inner sep=2pt] at (axis cs:4,0) {};
\node[label={90:{$(u, y_+(u))$}}] at (axis cs:3.5,0) {};

\end{axis}
\end{tikzpicture}
\caption{}
\end{subfigure}

\caption{
The techniques from each subcase of Case~2 from the proof of Lemma~\ref{lem:cubic_y_deg2}. (a) The curve $y^2(8+4x) + y(-32) + (x^3 - 4x^2 - 16x + 32) = 0$  on the region $1 \leq x \leq 4$ satisfies $y_-$ is convex while $y_+$ is concave.  One can find a line separating the two by only considering the $y_+$ and $y_-$ at the endpoints $x=1$ and $x=4$.
(b) The curve $y^2(-20x + 80) + (3x^3 - 8x^2 + 82x + 336) = 0$ on the region $6 \leq x \leq 10$ satisfies $y_+$ is convex while $y_-$ is concave.  We determine the $x$-values $x^*$ where $y_+$ and $y_-$ are closest and then create two separate regions on which to separate $y_+$ from $y_-$.  The separation on each region computes a point and a slope from this point to separate.
(c) The curve $y^2(4x - 1) + y(32) + (2x^2 - 16x - 32)$ on the region $-2 \leq x \leq 8$ satisfies $y_+$ and $y_-$ are concave.   To separate the curves, we draw the line connecting the endpoints of $y_+$ to itself.  The red shaded region around this line in the figure is explained better in the next subfigure.  (d)  A more abstract example of two concave functions shows that connecting the endpoints may still intersect the lower curve.  Therefore, we remove the red shaded region around this line to ensure that we separate the two curves.  
}

\label{fig:cases}
\label{fig:cases-detail}
\end{figure}
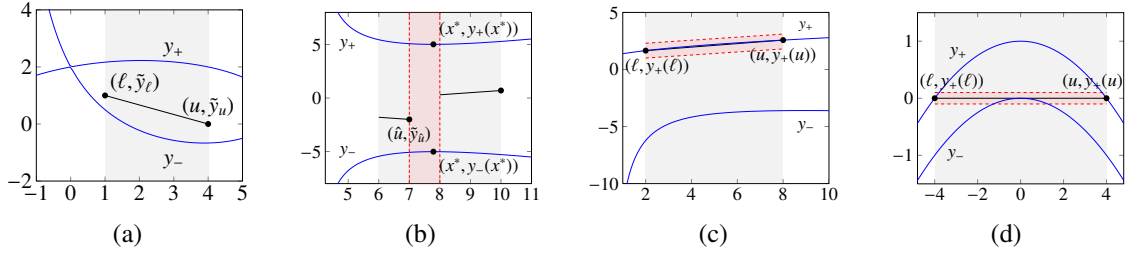

\begin{lemma}
\label{lem:cubic_y_deg2}
Suppose $deg_y(f) = 2$. For any $\omega \in \Z+\tfrac{1}{2}$, we can find a division description for $S_{\leq \omega}^f$ on $B$ in polynomial time.
\end{lemma}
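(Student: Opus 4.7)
The plan is to view $f(x,y) = \omega$ as a quadratic in $y$ with discriminant $\Delta(x) := f_1(x)^2 - 4 f_2(x)(f_0(x)-\omega)$, so that its level curve decomposes into the two branches $y_\pm(x) = \frac{-f_1(x) \pm \sqrt{\Delta(x)}}{2 f_2(x)}$ handled by Lemma~\ref{lem:quadratics}. First I would apply Lemma~\ref{lem:quadratics} to $(f_0 - \omega, f_1, f_2)$, together with Lemma~\ref{lem:numerical} applied to $f_2$ and $\Delta$, to build a set $\mathcal A$ of rational $1/4$-approximations so that on every interval $I \in \I_{\mathcal A}$ both $f_2$ and $\Delta$ have constant nonzero sign and, when $\Delta > 0$, each of $y_+, y_-$ is well-defined, continuous, and individually convex or concave. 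The vertical lines $\{x = x_0\}$ for $x_0 \in X_{\mathcal A}$ enter the division description directly as the $L_k$, so it only remains to cover each strip $I \times [-R,R]$ with $I \in \I_{\mathcal A}$.

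When $\Delta < 0$ on $I$, the sign of $f - \omega$ equals $\mathrm{sign}(f_2)$ throughout the strip, so the entire strip serves as a single $P_i$ or $Q_j$. When $\Delta > 0$ on $I$, I would analyze the four configurations shown in Figure~\ref{fig:cases} by convexity type of $y_\pm$. In the first case, one branch is convex and the other concave in such a way that the region between them is convex (Figure~\ref{fig:cases}(a)); then the whole strip is an admissible $P_i$ (or $Q_j$, depending on $\mathrm{sign}(f_2)$). In the ``hourglass'' configuration (Figure~\ref{fig:cases}(b)), the two lobes of the superlevel set are individually convex (an epigraph of a convex branch and a hypograph of a concave branch), and at the point $x^*$ minimizing $y_+ - y_-$ the two branches have parallel tangents, so any line strictly between those tangents separates the lobes. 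In the remaining two configurations (Figure~\ref{fig:cases}(c),(d)), where $y_\pm$ share a convexity type, I would take the chord joining the two endpoint values of the outer branch: by convexity this chord lies on the correct side of that branch and only needs to be pushed slightly away from the inner one.

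The separating line produced in the last three cases generally has irrational coefficients, so I would invoke the thin-box trick of Lemma~\ref{lem:line}. Using Lemma~\ref{lem:numerical} to approximate $x^*$ or the relevant endpoint values to sufficient precision, I would enclose a candidate rational separating line in an axis-aligned rational box $K \subseteq I \times [-R,R]$ of horizontal extent at most $2R$ and vertical thickness less than $1/(4R)$, ensuring $\vol(K) < 1/2$. Lemma~\ref{lem:line} then yields a rational line $L$ containing $K \cap \Z^2$, which I add to the list $L_k$. Removing the interior of $K$ from $I \times [-R,R]$ splits the strip into two rational polyhedra on each of which the corresponding sub- or super-level set reduces to a single convex piece (an epigraph, a hypograph, or a convex lens bounded by one convex and one concave branch).

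The main obstacle I expect is the quantitative root-separation estimate needed to make the box $K$ fit: one must bound $y_+(x) - y_-(x)$, as well as the gap between the branches and the chosen separator, from below by an inverse-polynomial quantity on the relevant $x$-range, so that the precision required of Lemma~\ref{lem:numerical} stays polynomial and the vertical thickness of $K$ can actually be made smaller than $1/(4R)$. I would extract this bound from Theorem~\ref{thm:polynomial_root_bounds} applied to $\Delta$ and to the derivative polynomials whose roots define $x^*$. Once this quantitative separation is in hand, the construction above produces a valid division description of $S^f_{\leq \omega}$ on $B$ in polynomial time.
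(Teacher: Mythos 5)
Your overall strategy matches the paper's: split into strips via Lemma~\ref{lem:quadratics}, classify each strip by the convexity type of the branches $y_\pm$, and separate branches by (approximately) rational lines, using Lemma~\ref{lem:line} to sweep the integer points near an irrational separator onto a single rational line. However, there is a genuine gap in your treatment of the shared-convexity cases (your configurations (c) and (d)). You assert that the chord joining the endpoint values of the outer branch ``only needs to be pushed slightly away from the inner one.'' For two arbitrary concave functions $y_- < y_+$ this is simply false: the inner branch can bulge arbitrarily far above the chord of the outer one (take $y_+ = -x^2+10$ and $y_- = -10x^2+9.5$ on $[-1,1]$; the chord of $y_+$ is $y=9$ while $y_-(0)=9.5$). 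If that happens, the polyhedron on the ``inner'' side of your candidate line contains pieces of both lobes of the superlevel set, and its intersection with $S^f_{>\omega}$ is not convex, so the division description fails. What rescues the construction is special to the cubic: by B\'ezout (Remark~\ref{rem:three-intersections}) a line meets the level set $S^f_{=\omega}$ at most three times unless contained in it; since the chord already meets the outer branch twice, it meets the inner branch at most once and hence cannot strictly cross it, so it is a weak separator that can indeed be fattened into the thin quadrilateral you describe. Without invoking this (or an equivalent) fact, the step fails.

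Two further points. First, you never treat $\Delta \equiv 0$, yet your setup requires $\Delta$ to have constant nonzero sign on each interval and Lemma~\ref{lem:numerical} parts~(ii)--(iii) do not apply to the zero polynomial; in this degenerate case $y_+\equiv y_-$ and the level curve is a critical set, which the paper handles via Lemma~\ref{lem:cubic-easy} to show it lies on a single, exactly computable rational line. Second, some details of your hourglass case need repair: the minimizer $x^*$ of $y_+-y_-$ may sit at an endpoint of the interval (then the tangents are not parallel and one must use the one-sided slope inequality); an \emph{axis-aligned} box of horizontal extent $2R$ and vertical thickness below $1/(4R)$ cannot contain a steep separating line, so the thin region must be a slanted quadrilateral with small vertical extent at each end, as in the paper; and Theorem~\ref{thm:polynomial_root_bounds} by itself only bounds magnitudes of roots, not the minimum of $\Delta$ at the algebraic point $x^*$, so the quantitative precision argument you defer needs either a resultant-type bound (as in Lemma~\ref{lem:lower-bound}) or the paper's device of refining the subdivision so that all comparisons happen at integer endpoints, where integrality gives unit lower bounds. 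Your simplification in configuration (a) --- taking the whole strip as one region, since the set between a concave upper branch and a convex lower branch is already convex --- is correct and indeed avoids the separating segment the paper constructs there.
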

\begin{proof}
We begin by finding a set of $1/4$-approximations $\mathcal A = \{\tilde \alpha_1, \dots, \tilde \alpha_k\}$, $k \leq 1$, of the zeros of $f_2(x)$ with Lemma~\ref{lem:numerical} part~\eqref{part2}. We focus on intervals $I \in \I_{\mathcal A}$, since $f_2(x)$ is non-zero on these intervals.
On the level set $S_{=\omega}^f$, we can write $y$ in terms of $x$ using the quadratic formula, yielding two functions 
$$
y_+(x) = \frac{-f_1(x) + \sqrt{\Delta}}{2f_2(x)},\quad y_-(x) =\frac{-f_1(x) - \sqrt{\Delta}}{2f_2(x)},\quad \text{where}\ \Delta = f_1(x)^2 - 4f_2(x) (f_0(x)-\omega).
$$
By Lemma~\ref{lem:numerical} part~\eqref{part1}, we can test whether or not $\Delta \equiv 0$.

\textbf{\underline{Case 1:} $\Delta \equiv 0$.}  If $\Delta \equiv 0$, then $y_+ \equiv y_-$, meaning that all roots are double roots.  Therefore, $f(x, y) - \omega$ can be written as  $f_2(x) \Big(y - \tfrac{-f_1(x)}{2 f_2(x)}\Big)^2$.  It follows that $\nabla f(x, y_+(x)) = 0$  for all $x$ in the domain of $y_+$, and hence the gradient $\nabla f$ is zero on the level set.  From the definition of affinely critical, Lemma~\ref{lem:cubic-easy} and the fact that $f$ is not constant on $\R^2$, we must have that the level set on $x \in I$ is contained in a line since $y_+$ is differentiable in $I$.
Moreover, we can compute the line exactly by evaluating the derivative and the function at a point where $f_2(x) \neq 0$.  Then we write it as $a x + by = c$ with $a,b,c \in \Z$. As before, our division description comprises lines from $X_{\mathcal A}$ and the line $ax + by = c$, whereas the polyhedra come from $\I_{\mathcal A}$ and the inequalities $ax + by \geq c+1$ and $ax + by \leq c - 1$.

\textbf{\underline{Case 2:} $\Delta \not\equiv 0$.}
By Lemma~\ref{lem:quadratics}, we can find a list $\mathcal A = \{\tilde \alpha_1, \dots, \tilde \alpha_k\}$ of rational points such that $y_\pm$ are well defined, continuous and either convex or concave on each $I \in \I_{\mathcal A}$. Moreover, on each interval $\Delta \ne 0$, so they do not intersect. Hence, $y_\pm$ are convex or concave (or both) on each interval.
Furthermore, we can determine whether $y_+ > y_-$ or $y_+ < y_-$ on the interval by evaluating one point in the interval.

We will assume from here on that $y_+ > y_-$ on the interval $I$ as the calculations are similar if $y_- < y_+$.  Note that $y_+ > y_-$ on $I$ implies that $f_2 > 0$ on $I$.
Let $\ell,u \in \Z$ be the endpoints of $I$, that is $I = [\ell, u]$.
Since we are interested in a division description on $B$, we may assume $-R \le \ell \le u \le R$.  We distinguish the following four cases based on the convexity or concavity of $y_+, y_-$ on $I$.

\textbf{Case 2a: $y_+$ concave, $y_-$ convex.} (cf.\  Figure~\ref{fig:cases} (a)) Consider $f(\ell,y) - \omega$ and $f(u,y) - \omega$ as quadratic polynomials in $y$. We use Lemma~\ref{lem:numerical} part~\eqref{part3} to find upper and lower bounds on their roots.
Since Lemma~\ref{lem:numerical} part~\eqref{part3} finds non-intersecting bounding boxes on each root for any prescribed $\epsilon$, we simply take $\epsilon = 1$.  These approximations are actually approximations to the values of $y_-(x)$ and $y_+(x)$ at $x=\ell$ and $x=u$.  Take the averages between the lower bounds of the upper roots and the upper bounds of the lower roots, and call these averages $\tilde y_\ell$ and $\tilde y_u$.  
Consider the rational line segment $\conv\{ (\ell, \tilde y_\ell), (u, \tilde y_u)\}$.  Due to the convexity and concavity of $y_+$ and $y_-$ on the interval, this line segment separates $y_-$ from $y_+$ on $[\ell,u]$.

\textbf{Case 2b: $y_+$ convex, $y_-$ concave.} (cf.\  Figure~\ref{fig:cases} (b)) Since the epigraph of $y_+$ and the hypograph of $y_-$ are both convex sets, there exists a hyperplane that separates them due to the hyperplane separation theorem. To find such a hyperplane, suppose first that we can exactly determine $x^*$ that minimizes $y_+(x) - y_-(x)$ on $[\ell, u]$, and suppose further that we could exactly compute $y'_+(x^*)$.
If $x^*\in (\ell, u)$, then $y'_+(x^*) = y'_-(x^*)$, so the line passing through $x^*$ with slope $y'_+(x^*)$ separates the two regions. Otherwise, suppose that $x^* = \ell$. Then $y'_-(x^*) \le y'_+(x^*)$, so the same line again separates the two regions. The case where $x^* = u$ is analogous.

However, $x^*$ may be irrational, so we might not be able to determine it exactly. To find a numerical approximation $\tilde x^*$, note that $y_+ - y_- = \sqrt{\Delta}/f_2$.  Since this quantity is nonnegative on $[\ell, u]$, we instead minimize the square, which is $\Delta/f_2^2$.  This is a quotient of polynomials, and therefore we can approximately compute the zeros of the first derivative, which occur at $\Delta' f_2^2 - 2 f_2^{} f'_2 \Delta = 0$.  In fact, either $y_-$, $y_+$ are both lines, or there are at most polynomially many local minima.  

Let $\mathcal B = \{\tilde \beta_1, \dots, \tilde \beta_{\hat k}\}$ be the $\epsilon$-approximations of these roots with $\epsilon = 1/4$. We consider $\I_{\mathcal A \cup \mathcal B}$ and $X_{\mathcal A \cup \mathcal B}$ and we focus on an interval $\hat I \in \I_{\mathcal A \cup \mathcal B}$ with $\hat I \subseteq I$.  Let $\hat \ell, \hat u \in \Z$ be the endpoints of $\hat I$.  Since $\hat I \in \I_{\mathcal A \cup \mathcal B}$, no minimizer of $y_+ - y_-$ lies in $(\hat \ell, \hat u)$.

Since no minimizer lies in $(\hat \ell, \hat u)$, $y_+(x) - y_-(x)$ is minimized either at $\hat \ell$ or at $\hat u$, so we just compare the values.  Since $y_+ - y_- = \sqrt{\Delta}/f_2$ is nonnegative on $I$, we instead compare the squares $(y_+(\hat \ell) - y_-(\hat \ell))^2$ and $(y_+(\hat u) - y_-(\hat u))^2$, thus avoiding approximation of square roots.  Suppose without loss of generality that $\hat u$ is the minimizer. 
 
Now consider 
$$
 y_+' - y_-' = (\sqrt{\Delta}/f_2^{})' = \frac{(\Delta' f_2^{} - 2 f'_2 \Delta)\sqrt{\Delta}}{2 f_2^2 \Delta}.
$$  
Call $a = \Delta' f_2^{} - 2 f'_2 \Delta$ and $b = 2 f_2^2$. Then $a,b,\Delta \colon \Z\cap \hat I \to \Z \setminus\{0\}$, and they are all polynomials of bounded degree.  A straightforward calculation shows that  $1 \leq |a(\hat u)|, |b(\hat u)|, |\Delta(\hat u)| \leq 4 \times 10^2 M^3 R^5$.  Therefore
$$
 |y_+'(\hat u) - y_-'(\hat u)| = \left| \frac{a(\hat u)}{b(\hat u) \sqrt{\Delta(\hat u)}} \right| \geq \left| \frac{1}{\left( 4 \times 10^2 M^3 R^5 \right)^2} \right|.
$$
Let $\epsilon := \frac{1}{4 \left( 4 \times 10^2 M^3 R^5 \right)^2}$.  We need to approximate $y_+'(\hat u)$ and $y_-'(\hat u)$ within a factor of $\epsilon$.
Using the representation in Lemma~\ref{lem:quadratics}, we have 
$$
y'_+ =  \frac{\Delta   \big( f_2' f_1^{} -f_2^{}  f_1' \big)+ (-\Delta   f_2' +\frac{1}{2} f_2^{}  \Delta')\sqrt{\Delta} }{2 f_2^2\Delta} = \frac{X + Y \sqrt{\Delta}}{Z} ,
$$
Hence, we can compute $X,Y,Z$ exactly, but we need to approximate $\sqrt{\Delta}$.  A straightforward calculation shows that we only need to approximate this within a factor of $\hat \epsilon := \tfrac{\epsilon}{4 \times 10^2 M^3 R^5}$.  
A similar calculation follows for $y'_-$.

We can compute $\epsilon$-approximations $\tilde y_+(\hat u)$ and $\tilde y_-(\hat u)$ using a numerical square root tool such as~\cite{mansour_complexity_1989} to approximate $\sqrt{\Delta(\hat u)}$ to an accuracy of $\hat \epsilon$.
Let 
$$
m = \tfrac{1}{2}(\tilde y_+'(\hat u) + \tilde y_-'(\hat u)) \in [\min(y_-(\hat u), y_+(\hat u)), \max(y_-(\hat u), y_+(\hat u))].
$$
Moreover, we compute $\tilde y_{\hat u} = \tfrac{1}{2}(\tilde y_+(\hat u) + \tilde y_-(\hat u))$ where $\tilde y_+(\hat u)$ and $\tilde y_-(\hat u)$ are approximations computed from the roots of $f(\hat u,y)$ using Lemma~\ref{lem:numerical}.  
Then the line through $(\hat u, \tilde y_{\hat u})$ with slope $m$ separates $y_-$ and $y_+$ on $[\hat \ell, \hat u]$.

\textbf{Case 2c: $y_+$ concave, $y_-$ concave.} (cf.\ 
Figure~\ref{fig:cases} (c) and (d)) Consider the line segment $L$ connecting the two endpoints of $y_+$, i.e., $\conv\{(\ell, y_+(\ell)), (u, y_+(u))\}$.  
We claim that $L$ intersects the graph of $y_-$ in at most one point in $[\ell, u]$. By Remark~\ref{rem:three-intersections}, either $L$ coincides with the graph of $y_+$, or $L$ intersects the level set $S^f_{=\omega}$ at most three times. Since $L$ intersects the graph of $y_+$ twice and $y_-(x) < y_+(x)$, $L$ can intersect the graph of $y_-$ at most once.

Therefore, the line $L$ is a weak separator of the curves $y_-(x)$ and $y_+(x)$.  Since $L$ may be irrational, we cannot compute it exactly, but we approximate it instead. 
Let $\epsilon := \frac{1}{2 (u - \ell)}$.  We use Lemma~\ref{lem:numerical} part~\eqref{part3} to find bounding $\epsilon$-approximations to the roots of the equations $f(\ell, y) = \omega$ and $f(u,y) = \omega$.  Hence we can obtain $\tilde y_\ell^1 < y_+(\ell) <   \tilde y_\ell^2$ and $\tilde y_u^1 < y_+(u) <   \tilde y_u^2$ such that $|\tilde y_\ell^1 -  \tilde y_\ell^2| < \epsilon$ and $|\tilde y_\ell^1 -   \tilde y_\ell^2| < \epsilon$.  We then construct the quadrangle $Q = \conv(\{ (\ell, \tilde y_\ell^1), (\ell, \tilde y_\ell^2), (u, \tilde y_u^1), (u, \tilde y_u^2) \})$.  By construction, $\vol(Q) \leq \epsilon (u - \ell) \leq 1/2$. Therefore, by Lemma~\ref{lem:line}, this contains at most one line of integer points that we can compute a description for in polynomial time. We add this line to our division description.

Furthermore, $L \subseteq Q$.  Therefore, $y_-$ is strictly below the line $\conv\{(\ell, \tilde y_\ell^2) ,(u, \tilde y_u^2)\}$ and $y_+$ is strictly above the line $\conv\{(\ell, \tilde y_\ell^1), (u, \tilde y_u^1)\}$.  We then add to our division description the two polyhedra given by $(x, y) \in [\ell,u] \times \R$ such that $(x, y)$ is either above $\conv\{(\ell, \tilde y_\ell^2) ,(u, \tilde y_u^2)\}$ or below $\conv\{(\ell, \tilde y_\ell^1), (u, \tilde y_u^1)\}$.

\textbf{Case 2d: $y_+$ convex, $y_-$ convex.} This case is analogous to the previous case, where instead here we take the line segment $\conv\{(\ell, y_-(\ell)), (u, y_-(u))\}$.

We have shown how to divide each interval, thus completing the proof.
\end{proof}

\begin{lemma}
\label{lem:cubic_y_deg3}
Suppose $deg_y(f) = 3$.  For any $\omega \in \Z+\tfrac{1}{2}$, we can find a division description for $S_{\leq \omega}^f$ on $B$ in polynomial time.
\end{lemma}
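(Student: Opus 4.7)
The plan is to follow the template of Lemma~\ref{lem:cubic_y_deg2}, adapted to handle up to three branches of the level set $\{f=\omega\}$ instead of two. For fixed $x$ with $f_3(x)\neq 0$, the equation $f(x,y)=\omega$ is a genuine cubic in $y$ and has one or three real roots, so the level set has at most three branches $y_1(x)\le y_2(x)\le y_3(x)$. First I would approximate the real zeros of $f_3(x)$ using Lemma~\ref{lem:numerical} and restrict attention to intervals $I\in\I_{\mathcal A}$ on which $f_3$ has a constant nonzero sign.

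The key structural idea is to use the critical curves in $y$. The two branches $g_\pm(x)$ of the quadratic $f_y(x,y)=3f_3(x)y^2+2f_2(x)y+f_1(x)=0$ have exactly the form treated in Lemma~\ref{lem:quadratics}, so on $I$ I can find a refinement of subintervals on whose interiors each of $g_\pm$ is well-defined (or has no real value) and is of determined convexity type. Between consecutive critical curves $f(x,\cdot)$ is strictly monotone, so on each of the up to three curvilinear strips cut out by $g_-,g_+$ there lies at most one branch of the level set, which is locally a smooth graph $y=\phi(x)$.

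To locate the inflections and ends of each branch, I would use implicit differentiation, giving $\phi''=-(f_{xx}f_y^2-2f_{xy}f_xf_y+f_{yy}f_x^2)/f_y^3$. Setting the numerator to zero on $\{f=\omega\}$ and eliminating $y$ by a resultant against $f(x,y)-\omega$ yields a univariate polynomial in $x$ whose real zeros contain every inflection $x$-coordinate of every branch. The $x$-values at which a branch appears or disappears are the real zeros of the discriminant of $f(x,y)-\omega$ in $y$, which is again a univariate polynomial in $x$ of degree bounded in $d=3$. By Lemma~\ref{lem:numerical} I $1/4$-approximate all these zeros, add them to $\mathcal A$, and obtain subintervals $[\ell,u]$ on which every branch present is defined throughout and has definite convexity or concavity.

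On each such $[\ell,u]$ I would separate the at-most-three branches pairwise with rational lines by running the case analysis of Cases~2a--2d of Lemma~\ref{lem:cubic_y_deg2} on each consecutive pair: the adapted Cases~2a and 2c/2d reduce to endpoint evaluations, legitimized by Remark~\ref{rem:three-intersections} which bounds the intersection of any line with $\{f=\omega\}$ by three, while the tangent-slope approximation of Case~2b applies verbatim to one convex/one concave pair. Irrational separators are absorbed into rational quadrangles of volume less than $\tfrac12$ and converted to a line through integer points by Lemma~\ref{lem:line}. I expect the main obstacle to be bookkeeping: three simultaneous branches in varying convexity regimes multiply the subcases, and one has to verify that the auxiliary resultant and discriminant polynomials remain of polynomially bounded degree and coefficient size so that Lemma~\ref{lem:numerical} still runs in polynomial time; the core analytic ingredients are already available from Lemmas~\ref{lem:quadratics} and~\ref{lem:cubic_y_deg2}.
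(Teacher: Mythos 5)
Your overall strategy---directly analyzing the up-to-three branches of the cubic level curve---is genuinely different from the paper's, which sidesteps the cubic-in-$y$ analysis entirely: it applies the shear $x=u$, $y=\bar a_\epsilon u + v$, where $\bar a_\epsilon$ approximates a real root $\bar a$ of $c_0+c_1a+c_2a^2+c_3a^3=0$, so that the transformed polynomial is (up to an error of at most $1/4$ on $B$, which is harmless for $\omega\in\Z+\tfrac12$ and integer coefficients) at most quadratic in $u$; it then invokes Lemma~\ref{lem:cubic_y_deg2} and pulls the resulting division description back through the rational transformation $\bar A_\epsilon$. What the paper's route buys is precisely that all quantitative root manipulation stays at the level of quadratics in one of the variables.

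The genuine gap in your route is the claim that Case~2b of Lemma~\ref{lem:cubic_y_deg2} ``applies verbatim'' to a convex/concave pair of cubic branches. That case is the quantitatively hardest part of the two-branch proof, and every step of it leans on the quadratic-formula representation: the gap between the branches is $\sqrt{\Delta}/f_2$, so its minimizers are located among the zeros of the explicit polynomial $\Delta' f_2^2 - 2 f_2 f_2' \Delta$; the derivative difference at an integer point $\hat u$ equals $a(\hat u)/\bigl(b(\hat u)\sqrt{\Delta(\hat u)}\bigr)$ with $a,b,\Delta$ nonzero integers there, which yields the explicit lower bound that determines the required approximation accuracy $\epsilon$; and the separating slope is recovered from the representation $y_+'=(X+Y\sqrt{\Delta})/Z$. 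For roots of a genuine cubic in $y$ none of these objects exist in closed form: the branch gap $y_{i+1}-y_i$ is an algebraic function of degree up to $6$, and locating its minimizers, lower-bounding $|y_{i+1}'(\hat u)-y_i'(\hat u)|$ at integer abscissae, and deciding the precision to which the common tangent must be approximated would all require new resultant constructions and separation bounds for degree-three algebraic numbers. This is plausibly doable but it is a substantial missing piece, not bookkeeping. Two smaller gaps: Lemma~\ref{lem:quadratics} gives you convexity information only for the critical curves $g_\pm$, not for the level-curve branches themselves, so branch convexity must come entirely from your resultant of $f-\omega$ with the numerator of $\phi''$; and you must handle the degenerate case where that resultant vanishes identically (a branch that is a line of inflection points), in the spirit of Case~1 of Lemmas~\ref{lem:cubic_y_deg1} and~\ref{lem:cubic_y_deg2}.
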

\begin{proof}
We create the division description by applying a linear transformation such that the objective function becomes a quadratic function in one variable and then apply Lemma~\ref{lem:cubic_y_deg2}.  For any $a \in \R$, consider the linear transformation  $x = u$ and $y = au + v$, that is $A(u,v) = (x,y)$ where 
$$
A = 
\begin{bmatrix}
1 & 0 \\
a & 1
\end{bmatrix}.
$$
Notice that $A$ is invertible, and $u = x$, $v = y - ax$.
Define
$$
g_a(u,v) := f(u, au + v) = (c_0 + c_1 a + c_2 a^2 + c_3 a^3) u^3 + w_a(u,v),
$$
where $w_a(u,v)$ is at most quadratic in terms of $u$ and at most cubic in terms of $v$, and $c_0, \dots, c_3$ are a subset of the coefficients of $f$.  Let $\bar a \in \R$ be such that 
$$
c_0 + c_1 \bar a + c_2 \bar a^2 + c_3 \bar a^3 = 0.
$$
Note that since $\deg_y(f) = 3$, we have that $c_3 \ne 0$. Since this is a cubic equation with integer coefficients, we know there is at least one real solution.
Define $\bar A = \begin{bmatrix} 1 & 0 \\ \bar a & 1 \end{bmatrix}$ and let $\bar R \ge 1$ be an upper bound on $|\bar a| + 1 = \| \bar A^{-1} \|_1 = \| \bar A^{-1} \|_\infty$, which can be chosen of polynomial size in terms of the coefficients of $f$ by Theorem~\ref{thm:polynomial_root_bounds}.
Set $1/\epsilon = 4 \times 36 \times M (2 \bar R + 1)^3 (\bar R + 1)^3 R^3$ and compute an approximation $\bar a_\epsilon$ such that $|\bar a - \bar a_\epsilon| < \epsilon$.  By Lemma~\ref{lem:numerical} part~\eqref{part2}, this approximation can be found in polynomial time.

Since $\epsilon \le 1$, for $1 \le i \le 3$ we have 
$
| \bar a^i - \bar a_\epsilon^i | \le \epsilon \ i \ (2 \bar R + 1)^i
$.
Thus for any $(u,v)$ with $\|(u,v)\|_2 \leq (\bar R + 1) R$, we have
\begin{align*}
|g_{\bar a_\epsilon}(u,v) - w_{\bar a_\epsilon}(u,v)| 
&= \left| (c_0 + c_1 \bar a_\epsilon + c_2 \bar a_\epsilon^2 + c_3 \bar a_\epsilon^3) \ u^3 \right|
\le \left| \sum_{i=1}^3 c_i (\bar a^i - \bar a_\epsilon^i) \right| (\bar R + 1)^3 R^3
\le \sum_{i=1}^3 |c_i| |\bar a^i - \bar a_\epsilon^i| (\bar R + 1)^3 R^3
\\
&\le \sum_{i=1}^3 4 \ M \epsilon \ 3 \ (2 \bar R + 1)^3 (\bar R + 1)^3 R^3
\le \epsilon \times 36 \times M (2 \bar R + 1)^3 (\bar R + 1)^3 R^3
\le \tfrac{1}{4} .
\end{align*}
Let $f_\epsilon(x,y) := w_{\bar a_\epsilon}(x, y - \bar a_\epsilon x)$ and consider $\bar A_\epsilon (u,v) = (x,y)$ with $\bar A_\epsilon = \begin{bmatrix} 1 & 0 \\ \bar a_\epsilon & 1 \end{bmatrix}$. Then, for all $x,y \in B$ we have
$$
\|(u,v)\|_2 = \|\bar A_\epsilon^{-1} (x,y)\|_2
\leq \|\bar A_\epsilon^{-1} \|_2 \, \| (x,y)\|_2
\leq \|\bar A_\epsilon^{-1} \|_1 \, \| (x,y)\|_2
\leq (\bar R + 1) \, \|(x,y)\|_2 \le (\bar R + 1) R ,
$$
because $\|\bar A_\epsilon^{-1} \|_2 \le \sqrt{\|\bar A_\epsilon^{-1} \|_1 \|\bar A_\epsilon^{-1} \|_\infty} = \|\bar A_\epsilon^{-1} \|_1$ and $|\bar a - \bar a_\epsilon| < \epsilon \le 1$. Thus
$$
|f(x,y) - f_\epsilon(x,y)| = |g_{\bar a_\epsilon}(u,v) - w_{\bar a_\epsilon}(u,v)| \leq 1/4 .
$$
Thus, for $\omega \in \Z+\tfrac{1}{2}$, we have that 
$$
\{ (x,y) \in B \cap \Z^2 : f(x,y) \leq \omega\} = \{ (x,y) \in B \cap \Z^2 : f_\epsilon(x,y) \leq \omega\}.
$$

Therefore, we can solve the feasibility problem for $f$ by solving the feasibility problem for $f_\epsilon$.
By Lemma~\ref{lem:cubic_y_deg2}, we can find $P_i, Q_i$ and $L_i$ to divide the plane for the level sets of $w_{\bar a_\epsilon}$, which is done by scaling the function to have integer coefficients.  Then, under the linear transformation $\bar A_\epsilon$, we have that $\bar A_\epsilon P_i, \bar A_\epsilon Q_i, \bar A_\epsilon L_i$ are all rational polyhedra, so they comprise a division description.
\end{proof}

\begin{theorem}[cubic, bounded]
Theorem~\ref{thm:min_cubic_unbounded} holds when $P$ is bounded.
\label{thm:cubic}
\end{theorem}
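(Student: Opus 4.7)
The plan is to assemble Theorem~\ref{thm:cubic} directly from the preceding lemmas by a case analysis on $\deg_y(f)$. Since $f$ is a bivariate polynomial of total degree at most three, by swapping the roles of $x$ and $y$ if necessary we may assume $\deg_x(f) \ge \deg_y(f)$, which forces $\deg_y(f) \in \{0,1,2,3\}$. I would then handle each value separately.

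When $\deg_y(f) = 0$, the polynomial $f$ depends only on $x$, and Lemma~\ref{lem:cubic_y_deg0} already solves Problem~\eqref{eq:main} in polynomial time directly, without passing through the sublevel-set machinery. So this case requires no additional work.

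For $\deg_y(f) \in \{1,2,3\}$, I would reduce the optimization problem to a feasibility problem via Lemma~\ref{lem:feasibility_to_optimization}: since $P \subseteq B = [-R,R]^2$ with $R$ of polynomial size and $f$ has integer coefficients, the optimal value lies in $[-MR^d, MR^d]$, and a binary search over $\omega \in \Z + \tfrac{1}{2}$ in this range suffices, provided we can test emptiness of $S^f_{\le \omega} \cap P \cap \Z^2$ in polynomial time for each queried $\omega$. For every such $\omega$, Lemmas~\ref{lem:cubic_y_deg1}, \ref{lem:cubic_y_deg2}, and \ref{lem:cubic_y_deg3} produce, in polynomial time, a division description of the sublevel set $S^f_{\le \omega}$ on the bounding box $B$ in the sense of Definition~\ref{def:division-description}. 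Theorem~\ref{thm:dd-min} then immediately converts this family of division descriptions into a polynomial-time algorithm for Problem~\eqref{eq:main}.

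All the genuinely hard work is absorbed into the lemmas preceding the theorem: in Lemma~\ref{lem:cubic_y_deg2} one separates the two branches $y_\pm$ of the level curve by rational lines using convexity and concavity analysis and numerical approximations, while in Lemma~\ref{lem:cubic_y_deg3} one applies a near-rational shear that approximately kills the leading $y^3$ coefficient in order to reduce to the quadratic case. Given these lemmas, the proof of Theorem~\ref{thm:cubic} itself is essentially the observation that the case analysis on $\deg_y(f)$ is exhaustive and that the chain Lemma~\ref{lem:feasibility_to_optimization} $\to$ division description $\to$ Theorem~\ref{thm:dd-min} runs in polynomial time, so there is no substantive obstacle to overcome beyond bookkeeping.
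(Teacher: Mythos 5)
Your proposal is correct and matches the paper's proof, which simply cites Lemmas~\ref{lem:cubic_y_deg0}, \ref{lem:cubic_y_deg1}, \ref{lem:cubic_y_deg2}, \ref{lem:cubic_y_deg3} together with Theorem~\ref{thm:dd-min}; your write-up just makes the case analysis on $\deg_y(f)$ and the chain through Lemma~\ref{lem:feasibility_to_optimization} explicit.
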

\begin{proof}
Follows directly from Lemmas~\ref{lem:cubic_y_deg0},~\ref{lem:cubic_y_deg1},~\ref{lem:cubic_y_deg2},~\ref{lem:cubic_y_deg3} and Theorem~\ref{thm:dd-min}.
\end{proof}

\section{Homogeneous Polynomials}
\label{sec:homogeneous}
In this section we will prove Theorem~\ref{thm:min_homogeneous_bounded} by showing that for homogeneous polynomials, we can choose one division description of the plane that works for all level sets.  This is done by appropriately approximating the regions where the function is quasiconvex and quasiconcave.  We say that a function $f$ is quasiconvex on a convex set $S$ if all the sublevel sets are convex, i.e., $\{ \x \in S : f(\x) \leq \omega\}$ is convex for all $\omega \in \R$.  A function $f$ is quasiconcave on a set $S$ if $-f$ is quasiconvex on $S$.
A polyhedral division of the regions of quasiconvexity and quasiconcavity is a division description for any sublevel set.
Therefore, if we can divide the domain into polyhedral regions where the objective is either quasiconvex or quasiconcave, then we can apply Theorem~\ref{thm:dd-min}.

In Section~\ref{sec:homogeneous-bordered-hessian} we study homogeneous functions and investigate where they are quasiconvex or quasiconcave.  In Section~\ref{sec:homogeneous-numerical}, we show how to divide the domain appropriately into regions of quasiconvexity and quasiconcavity, proving the main result for homogeneous polynomials.

\subsection{Homogeneous Functions and the Bordered Hessian}
\label{sec:homogeneous-bordered-hessian}
The main tool that we will use for distinguishing regions where $f$ is quasiconvex or quasiconcave is the \emph{bordered Hessian}.  For a twice differentiable function $f \colon \R^n \to \R$ the bordered Hessian is defined as 
\begin{equation}
H_f = 
\begin{bmatrix}
0 & \nabla f^T\\
\nabla f  & \nabla^2 f
\end{bmatrix} ,
\end{equation}
where $\nabla f$ is the gradient of $f$ and $\nabla^2 f$ is the Hessian of $f$.

We will denote by $D_f$ the determinant of the bordered Hessian of $f$.
Let $f_i$ denote the partial derivative of $f$ with respect to $x_i$ and $f_{ij}$ denote the mixed partial with respect to $x_i$ and $x_j$. 
The following result can be derived from Theorem~2.2.12 and 3.4.13 in \cite{cambini_generalized_2009}.
\begin{lemma}
\label{lem:determine-quasiconvex}
Let $f \colon \R^2 \to \R$ be a continuous function that is twice continuously differentiable on a convex set $S$. 
\begin{enumerate}[(i)]
\item If $D_f < 0$, then $f$ is quasiconvex on the closure of $S$.
\label{itm:Dfsmaller0}
\item If $D_f > 0$, then $f$ is quasiconcave on the closure of $S$.
\label{itm:Dflarger0}
\end{enumerate}

\end{lemma}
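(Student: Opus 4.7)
The plan is to invoke the classical bordered-Hessian characterization of quasiconvexity/quasiconcavity on an open convex set (the content of Theorems~2.2.12 and 3.4.13 of \cite{cambini_generalized_2009}) and then to upgrade the conclusion from $\intr(S)$ to $\cl(S)$ by a short continuity argument.

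For part~\eqref{itm:Dfsmaller0}, I would first apply the classical result directly on $\intr(S)$, which is an open convex set on which $f$ is $C^2$ and on which the continuous function $D_f$ remains strictly negative. This immediately gives quasiconvexity of $f$ on $\intr(S)$, i.e., $f(\lambda x + (1-\lambda) y) \le \max\{f(x), f(y)\}$ for all $x,y \in \intr(S)$ and $\lambda \in [0,1]$. To extend the inequality to $\cl(S)$, I would fix $x, y \in \cl(S)$ and use that, for a convex set $S \subseteq \R^2$ with nonempty interior, $\cl(S) = \cl(\intr(S))$, so there exist sequences $x_n, y_n \in \intr(S)$ with $x_n \to x$ and $y_n \to y$. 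Applying the quasiconvex inequality to $(x_n, y_n)$ and passing to the limit using continuity of $f$ gives the same inequality on $\cl(S)$, hence quasiconvexity on $\cl(S)$.

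For part~\eqref{itm:Dflarger0}, I would reduce to part~\eqref{itm:Dfsmaller0} applied to $-f$. A direct expansion of the $3 \times 3$ determinant, pulling out a factor of $-1$ from each of the last two rows (but none from the first row, which has a leading zero), shows that
\begin{equation*}
D_{-f} = -D_f.
\end{equation*}
Hence the hypothesis $D_f > 0$ on $S$ becomes $D_{-f} < 0$ on $S$, and part~\eqref{itm:Dfsmaller0} yields quasiconvexity of $-f$ on $\cl(S)$, which is by definition quasiconcavity of $f$ on $\cl(S)$.

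The main obstacle I anticipate is aligning the hypotheses of the classical bordered-Hessian test with the statement here. The cited result typically concludes quasiconvexity on an open convex set, whereas the lemma concludes it on the closure; the continuity argument above is designed precisely to bridge this gap, and it works because the inequality defining quasiconvexity is closed under pointwise limits. A harmless edge case is $\intr(S) = \emptyset$, which in $\R^2$ forces $S$ to lie on an affine line; there $f$ restricted to $S$ is a one-variable $C^2$ function (via its $C^2$ extension), and the signed-$D_f$ condition can be translated into an ordinary second-derivative test along the line, handling the degenerate case separately.
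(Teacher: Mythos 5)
Your route is the same as the paper's: invoke the second-order bordered-Hessian criterion of Cambini--Martein and then upgrade from the open set to $\cl(S)$ by continuity (the paper outsources that last step to Theorem~2.2.12 of the same reference; your sequence argument is that step done by hand and is fine), with part~(\ref{itm:Dflarger0}) obtained by a sign flip rather than the paper's ``the proof is similar.'' However, you are missing the one verification that constitutes essentially the whole of the paper's proof. The cited sufficient condition (Theorem~3.4.13) does not apply under the single hypothesis $D_f<0$: it additionally requires the first bordered leading principal minor, $-f_1(\x)^2$, to be strictly negative (equivalently, in the form the paper uses, that $\nabla f$ not vanish on $S$). Your proposal treats the citation as if ``$D_f<0$ on an open convex set implies quasiconvex'' were itself the classical statement, which would make the lemma an immediate restatement of the reference and is not what it says. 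The missing argument is short but necessary: expanding the $3\times 3$ determinant $D_f(\x)$ along its first row (or first column), every nonzero term contains a factor $f_1(\x)$ or $f_2(\x)$ because the $(1,1)$ entry is $0$; hence $\nabla f(\x)=\ve 0$ forces $D_f(\x)=0$, so $D_f<0$ on $S$ implies $\nabla f\neq \ve 0$ on $S$, and by the symmetry of the criterion in $x_1$ and $x_2$ this meets its nondegeneracy hypothesis. Without this observation the invocation of the classical test is incomplete.

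Two minor points. The identity $D_{-f}=-D_f$ is correct, but your bookkeeping is not: pulling $-1$ out of only the last two rows of $H_{-f}$ leaves a first row equal to $(0,-\nabla f^T)$, so a further sign from the first row is still needed; the cleanest argument is simply $H_{-f}=-H_f$ together with the fact that the matrix is $3\times 3$, so the determinant picks up $(-1)^3$. Your handling of the closure and of the degenerate case $\intr(S)=\emptyset$ is acceptable (the paper does not discuss the latter), though the one-dimensional case should be phrased so that it is actually used with the sign of $D_f$ rather than an unrelated second-derivative test.
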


We now briefly discuss general homogeneous functions.  
We say that $h\colon \R^n \to \R$ is homogeneous of degree $d$ if $h(\lambda \x) = \lambda^d h(\x)$ for all $\x \in \R^n$ and $\lambda \in \R$.  Clearly a homogeneous polynomial of degree $d$ is a homogeneous function of degree $d$.  

The following lemma shows that the determinant of the bordered Hessian has a nice formula for any homogeneous function.  This was proved in Hemmer \cite{hemmer_limiting_1995} for the case of $n=2$, which can be adapted easily to general $n$.

\begin{lemma}
\label{lemma:Df-det}
Let $h\colon \R^n \to \R$ be a twice continuously differentiable homogeneous function of degree $d \geq 2$.  Then 
\begin{equation}
\label{eq:Df-det}
D_h(\x) = \frac{-d}{d-1} h(\x) \cdot \det(\nabla^2h(\x)) \ \text{ for all } \x \in \R^n.
\end{equation}
\end{lemma}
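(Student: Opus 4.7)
The plan is to derive the identity from Euler's homogeneous function theorem, which is the natural tool whenever a homogeneous function and its derivatives are related. Recall that Euler's theorem states
\begin{equation*}
\nabla h(\x)^T \x = d \cdot h(\x) \quad \text{for all } \x \in \R^n ,
\end{equation*}
and that differentiating this identity coordinatewise yields
\begin{equation*}
\nabla^2 h(\x) \, \x = (d-1) \, \nabla h(\x) .
\end{equation*}
These two relations encode exactly the scalar $d \cdot h$ and the vector $\nabla h$ that appear in the bordered Hessian, so they should collapse $H_h$ to something tractable.

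The execution I have in mind avoids any case analysis on whether $\nabla^2 h$ is invertible: I will apply column operations directly to
\begin{equation*}
H_h(\x) = \begin{bmatrix} 0 & \nabla h(\x)^T \\ \nabla h(\x) & \nabla^2 h(\x) \end{bmatrix} .
\end{equation*}
Indexing the columns $0, 1, \dots, n$, I would replace column $0$ by column $0$ minus $\tfrac{1}{d-1} \sum_{i=1}^n x_i \cdot (\text{column } i)$. The top entry of that linear combination becomes $\tfrac{1}{d-1} \nabla h(\x)^T \x = \tfrac{d}{d-1} h(\x)$ by Euler, while the lower block produces $\tfrac{1}{d-1} \nabla^2 h(\x)\, \x = \nabla h(\x)$ by the differentiated identity. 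Thus the new column $0$ equals $\bigl(-\tfrac{d}{d-1} h(\x),\, 0, \dots, 0\bigr)^T$.

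Once the first column is reduced to this form, the identity follows by expanding the determinant along column $0$: the single nonzero entry multiplies the minor obtained by deleting row $0$ and column $0$, which is exactly $\det(\nabla^2 h(\x))$, and the sign works out to give
\begin{equation*}
D_h(\x) = -\frac{d}{d-1} h(\x) \cdot \det(\nabla^2 h(\x)) .
\end{equation*}
Because column operations do not change the determinant, this argument is valid for every $\x \in \R^n$, so no separate continuity extension is required.

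There is no real obstacle here beyond bookkeeping: the degree assumption $d \geq 2$ is needed precisely to divide by $d-1$ in the column combination, and the twice continuous differentiability of $h$ is what lets us differentiate Euler's identity to obtain the needed relation $\nabla^2 h \, \x = (d-1) \nabla h$. The only care point is to ensure the column-operation scalars are applied to the correct block so that the lower part of the modified column cancels $\nabla h(\x)$ exactly, which is what makes the cofactor expansion reduce cleanly to $\det(\nabla^2 h(\x))$.
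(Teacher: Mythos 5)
Your proof is correct and rests on the same key ingredients as the paper's: Euler's identity for $h$ and its differentiated form $\nabla^2 h(\x)\,\x = (d-1)\nabla h(\x)$, applied via column operations on the bordered Hessian. The only difference is cosmetic — you choose the coefficient $-\tfrac{1}{d-1}$ so the lower block of the first column is annihilated outright and a single cofactor expansion finishes, whereas the paper adds the $\x$-weighted columns, factors out $d$, and then uses multilinearity to recover $D_h$ on both sides and solve for it; your variant is, if anything, slightly more streamlined.
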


Recall that a polynomial $f(\x)\colon \R^n \to \R$ is homogeneous translatable if there exists a $\t \in \R^n$ such that $f(\x+\t) = h(\x)$ for some homogeneous polynomial $h(\x)$.
\begin{corollary}
\label{cor:Df_homogeneous}
Let $f$ be a homogeneous translatable polynomial in $2$ variables of degree $d \ge 2$.  Then either $D_f$ is identically equal to zero, or $D_f$ is a homogeneous translatable polynomial that translates to a homogeneous polynomial of degree $3d - 4$.
\end{corollary}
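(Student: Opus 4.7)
The plan is to reduce the translatable case to the genuinely homogeneous case treated by Lemma~\ref{lemma:Df-det}, using the fact that the bordered Hessian is covariant under translations of the argument. Let $\t \in \R^2$ be a translation such that $h(\x) := f(\x + \t)$ is a homogeneous polynomial of degree $d$, and work with $h$ rather than $f$.

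The first step is a translation-covariance observation. By the chain rule, $\nabla h(\x) = \nabla f(\x+\t)$ and $\nabla^2 h(\x) = \nabla^2 f(\x+\t)$. Consequently the bordered Hessian matrices satisfy $H_h(\x) = H_f(\x+\t)$ entrywise, and therefore
\begin{equation*}
D_h(\x) \;=\; D_f(\x+\t) \qquad \text{for all } \x \in \R^2.
\end{equation*}
So if I can show $D_h$ is either identically zero or a homogeneous polynomial of degree $3d-4$, then $D_f(\x) = D_h(\x - \t)$ is either identically zero or a homogeneous translatable polynomial with translation $\t$ and underlying homogeneous polynomial $D_h$ of degree $3d-4$.

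The second step is a degree count via Lemma~\ref{lemma:Df-det}. Applying that lemma (with $n=2$, which is permitted since $d \ge 2$) gives
\begin{equation*}
D_h(\x) \;=\; -\tfrac{d}{d-1}\, h(\x)\, \det\!\bigl(\nabla^2 h(\x)\bigr).
\end{equation*}
Since $h$ is homogeneous of degree $d$, each entry of $\nabla^2 h$ is a homogeneous polynomial of degree $d-2$ (or the zero polynomial), so the $2\times 2$ determinant $\det(\nabla^2 h)$ is either identically zero or a homogeneous polynomial of degree $2(d-2) = 2d-4$. Multiplying by the homogeneous polynomial $h$ of degree $d$ yields a polynomial that is either identically zero or homogeneous of degree $d + (2d-4) = 3d-4$. (Here I use that the ring $\R[x_1,x_2]$ is an integral domain, so the product vanishes identically only when one of the factors does.) This is the desired conclusion for $D_h$, and combined with Step~1 it gives the statement for $D_f$.

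There is no serious obstacle: the two ingredients are elementary chain-rule bookkeeping and a standard degree computation for products of homogeneous polynomials. The mildest subtlety is just noting that the same translation vector $\t$ works for $D_f$ as for $f$, which is exactly what the identity $D_f(\x) = D_h(\x-\t)$ records.
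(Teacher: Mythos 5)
Your proof is correct and follows essentially the same route as the paper's: translate to the homogeneous polynomial $h$, apply Lemma~\ref{lemma:Df-det} to get $D_h = -\tfrac{d}{d-1}\, h \cdot \det(\nabla^2 h)$, count degrees to obtain $3d-4$, and observe that $D_f(\x+\t) = D_h(\x)$ so the same translation works. Your version is in fact slightly tidier in making the chain-rule covariance of the bordered Hessian explicit and in noting the integral-domain argument for when the product vanishes identically.
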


If $h\colon \R^2 \to \R$ is a homogeneous polynomial of degree $d \ge 2$ and $D_h \equiv 0$, then $h$ is the power of a linear form, as the following lemma shows.

\begin{lemma}[Lemma~3 in \cite{hemmer_limiting_1995}]
\label{lem:D_f=0}
Let $h\colon \R^2 \to \R$ be a homogeneous polynomial of degree $d \ge 2$. Then $D_h \equiv 0$ if and only if there exist $\ve c \in \R^2$ such that 
\begin{equation}
h(\x) = ({\ve c}^T \x)^d .
\end{equation}
\end{lemma}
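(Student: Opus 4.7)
The plan is to invoke Lemma~\ref{lemma:Df-det}, which identifies $D_h$ up to a nonzero scalar with the product $h \cdot \det(\nabla^2 h)$.  Because $\R[x_1,x_2]$ is an integral domain, the condition $D_h \equiv 0$ is equivalent to $h \equiv 0$ or $\det(\nabla^2 h) \equiv 0$.  The case $h \equiv 0$ corresponds to the trivial representation with $\ve c = \mathbf{0}$, so the problem reduces, for $h \not\equiv 0$, to showing that $h_{11} h_{22} \equiv h_{12}^2$ if and only if $h$ is a $d$-th power of a linear form.

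The ``if'' direction is a direct chain-rule calculation: for $h = (\ve c^T \x)^d$ one obtains $h_{ij} = d(d-1) c_i c_j (\ve c^T \x)^{d-2}$, so that $h_{11} h_{22} - h_{12}^2 = d^2(d-1)^2 \big( c_1^2 c_2^2 - (c_1 c_2)^2 \big) (\ve c^T \x)^{2d-4} \equiv 0$.

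For the ``only if'' direction I would work in $\mathbb{C}[x_1,x_2]$, where unique factorization is cleanest.  Set $g := \gcd(h_{11}, h_{22})$.  The standard fact that in a UFD, coprime factors of a square are themselves squares up to units produces polynomials $A, B$ with $h_{11} = g A^2$, $h_{22} = g B^2$, and (after absorbing a sign into $B$) $h_{12} = g A B$.  Define the auxiliary form $L := x_1 A + x_2 B$.  Two applications of Euler's identity for homogeneous $h$, namely $x_1 h_{11} + x_2 h_{12} = (d-1) h_1$ and $x_1 h_{12} + x_2 h_{22} = (d-1) h_2$, immediately give $(d-1) h_1 = g A L$ and $(d-1) h_2 = g B L$; a further application of $x_1 h_1 + x_2 h_2 = d h$ then yields
\begin{equation*}
h = \frac{g L^2}{d(d-1)}.
\end{equation*}
Differentiating this representation of $h$ once and comparing with $(d-1) h_1 = g A L$, and analogously with $h_2$, produces the polynomial identities $L \, \partial_{x_i} g = (d-2) \, g \, \partial_{x_i} L$ for $i = 1, 2$.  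These force $g / L^{d-2}$ to be a constant in the field of rational functions (both partials vanish), so $g = c\, L^{d-2}$ for some scalar $c$, and thus $h = \tfrac{c}{d(d-1)} L^d$ is a $d$-th power of a linear form after absorbing the scalar into $L$.

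The main obstacle I anticipate is the bookkeeping of degenerate cases — most visibly when $h_{11} \equiv 0$, in which case the UFD decomposition collapses, but then $h_{12} \equiv 0$ too and Euler's identity applied twice shows directly that $h = \alpha\, x_2^d$.  A final detail is promoting the possibly complex linear form $L$ back to one with real coefficients so that the conclusion $(\ve c^T \x)^d$ with $\ve c \in \R^2$ holds as stated; this follows because $L^d = (d(d-1)/c)\, h$ has real coefficients, forcing $L$ to be a real scalar multiple of a real linear form up to a $d$-th root of unity that can be absorbed (at worst introducing an overall sign when $d$ is even).
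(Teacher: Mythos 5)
Your reduction via Lemma~\ref{lemma:Df-det} to $h_{11}h_{22}\equiv h_{12}^2$, the square decomposition $h_{11}=gA^2$, $h_{22}=gB^2$, $h_{12}=gAB$ over $\mathbb{C}$ with $\gcd(A,B)=1$, and the Euler computations giving $(d-1)h_1=gAL$, $(d-1)h_2=gBL$ and $h=gL^2/(d(d-1))$ are all fine (the paper itself only cites Hemmer for this lemma, so there is no in-paper proof to compare against). The gap is the step where you claim that differentiating $h=gL^2/(d(d-1))$ and comparing with $(d-1)h_i$ ``produces'' the identities $L\,\partial_i g=(d-2)\,g\,\partial_i L$. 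What the comparison actually produces is $L\,\partial_1 g+2g\,\partial_1 L=d\,gA$ (and the analogue with $B$). To get from there to your identity you need $\partial_1 L=A$ and $\partial_2 L=B$; but $\partial_1 L=A+x_1\partial_1 A+x_2\partial_1 B$, so this amounts to assuming that $A$ and $B$ are constants, and by Euler (using $x_1\partial_1L+x_2\partial_2L=(\deg L)L$ together with $x_1A+x_2B=L$) the assumption $\nabla L=(A,B)$ is \emph{equivalent} to $\deg A=\deg B=0$. Since ``$L$ is a linear form'' is precisely the nontrivial content of the lemma, the derivation as written is circular. One can also see the identity cannot follow formally from what precedes it: multiplying it by $x_i$, summing and applying Euler forces $\deg g=(d-2)\deg L$, which together with $\deg g=d-2-2\deg A$ and $\deg L=\deg A+1$ again forces $\deg A=0$.

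The skeleton is salvageable with one more differentiation: differentiate $(d-1)h_1=gAL$ and $(d-1)h_2=gBL$ once more, substitute $h_{11}=gA^2$, $h_{12}=gAB$, $h_{22}=gB^2$, and take the combinations $B\,\partial_i\bigl[(d-1)h_1\bigr]-A\,\partial_i\bigl[(d-1)h_2\bigr]$ for $i=1,2$; the left-hand sides vanish identically and the right-hand sides collapse to $gL\bigl(B\,\partial_iA-A\,\partial_iB\bigr)$, so $B\nabla A=A\nabla B$. Coprimality of $A,B$ then gives $A\mid\partial_iA$, and the degree drop under differentiation forces $\nabla A=0$, hence also $\nabla B=0$; with $A,B$ constant your identity, $g=cL^{d-2}$, and $h=\tfrac{c}{d(d-1)}L^d$ do follow, and the reality argument for $L$ goes through as you sketch. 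Finally, the ``overall sign'' you mention is not removable: for even $d$ a negative constant cannot be absorbed into a real linear form (e.g.\ $h=-x_1^2$ has $D_h\equiv 0$ but is not $(\ve c^T\x)^2$), so what your argument proves is $h=c_0(\ve c^T\x)^d$ with $c_0\in\R$; this is a caveat to the statement as transcribed from the citation rather than a defect you can fix.
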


\subsection{Division of Quasiconvex and Quasiconcave Regions and Proof of Theorem~\ref{thm:min_homogeneous_bounded}}
\label{sec:homogeneous-numerical}

Let $P$ be a bounded rational polyhedron and let $f$ be a homogeneous translatable polynomial and suppose that $D_f$ is also homogeneous translatable.  Recall that there exists an integer $R \ge 1$ whose size is polynomial in the size of $P$ such that $P \subseteq B := [-R, R]^2$.

We will show how to decompose $B \cap \Z^2$ into polyhedra $P_i$ where $D_f < 0$, $Q_i$ where $D_f > 0$, and lines $L_k$.  Thus we obtain a classification of regions of quasiconvexity and quasiconcavity by Lemma~\ref{lem:determine-quasiconvex} and can then use Theorem~\ref{thm:dd-min}.

The regions $D_f \leq 0$ and $D_f \geq 0$ cannot be described by rational hyperplanes; therefore, we approximate them sufficiently closely by rational hyperplanes.  In order to avoid numerical difficulties, we allow the possibility of leaving out a line of integer points, which we consider separately.  To determine those lines, Lemma~\ref{lem:line} will be useful.

We will summarize a strategy to create the desired regions for a homogeneous polynomial $h$ of degree $d$.  We will then prove a theorem for the more general setting of a homogeneous translatable polynomial $F$ in a similar way, which we later apply to $F = D_f$.

For a homogeneous polynomial $h$ of degree $d$ that is not the zero function, the roots of $h$ must lie on at most $d$ lines, which we will call zero lines.  This is because if $h(\bar \x) = 0$, then we have $h(\lambda \bar \x) = \lambda^d h(\bar \x) = 0$ for all $\lambda \in \R$.  Each zero line is either the line $x_1 = 0$, or must intersect the line $x_2 = \delta$ for any fixed $\delta \neq 0$.  The fact that the former is a zero line can be established by testing whether $h(0, x_2)$ is the zero polynomial.  To see how the latter defines zero lines, consider the polynomial $h(x_1, \delta)$. It is a univariate polynomial of degree $d$, and hence has at most $d$ roots.  Therefore, finding these roots (and hence the intersections of the zero lines of $h$ with the line $x_2 = \delta$) completes our classification of all the zero lines of $h$.

We choose $\delta = R$ and use Lemma~\ref{lem:numerical} part~\eqref{part3} to find intervals containing the roots of $h(x_1, R)$.
These intervals can be used to create quadrilaterals that cover the zero lines of $h$ within $B$.  Provided that the proper accuracy is used, these quadrilaterals will contain at most one line of integer points.
Finally, taking the complement of quadrilaterals in $B$, we find a union of polyhedra where $h$ is non-zero.  We can then find an interior point of each polyhedron on which we can evaluate $h$ to determine the sign on each polyhedron.

\begin{theorem}
\label{thm:partition_Z2-translatable}
Let $F(\x)$ be a homogeneous translatable polynomial in two variables of degree $d \in \Z_{\geq 0}$ with integer coefficients that is not the zero function. Let $\ell \in \Z_+$ be a bound on the size of the coefficients of $F$.  In polynomial time in $\ell$, $d$ and the size of $R$, we can partition $B \cap \Z^2$ into three types of regions:
\begin{enumerate}[(i)]
\item rational polyhedra $P_i$ where $F(\x) > 0$ for all $\x \in P_i \cap B \cap \Z^2$,  
\item rational polyhedra $Q_j$ where $F(\x) < 0$ for all $\x \in Q_j \cap B \cap \Z^2$,
\item one dimensional rational linear spaces $L_k$,
\end{enumerate}
where each polyhedron $P_i, Q_i$ is described by polynomially many rational linear inequalities and each linear space is described by one rational hyperplane, and all have size polynomial in $\ell$, $d$, and the size of $R$.  Furthermore, there are only polynomially many polyhedra $P_i, Q_i$ and linear spaces $L_k$.
\end{theorem}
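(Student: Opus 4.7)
The plan is to adapt the strategy sketched for homogeneous polynomials immediately before the theorem statement to the homogeneous translatable case. Writing $F(\x) = h(\x - \t)$ with $h$ homogeneous of degree $d$, the real zero set of $F$ in $\R^2$ is a finite union of at most $d$ lines, all passing through the (possibly irrational and unknown) translation point $\t$; their directions are precisely the zero directions of $h$. I will cover each zero line, restricted to $B$, by a thin rational quadrilateral $K$ with $\vol(K) < \tfrac12$, so that Lemma~\ref{lem:line} identifies at most one rational line $L_k$ of integer points inside each, and so that the complement in $B$ decomposes into rational polyhedra on which $F$ is nonvanishing and thus of constant sign.

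To approximate the zero lines without computing $\t$ explicitly, I would first extract the degree-$d$ homogeneous part of $F$: because translation preserves top-degree monomials, this part coincides with $h$. Applying the homogeneous strategy to $h$ sketched before the theorem statement gives, via Lemma~\ref{lem:numerical} part~\eqref{part3}, rational isolating intervals around each zero direction $v_1,\dots,v_k$ of $h$, with $k \le d$. To pin down the zero lines themselves, I would choose a rational probe line $L$ transverse to every approximate $v_i$ (feasible once the directions are known) and apply Lemma~\ref{lem:numerical} part~\eqref{part3} to the univariate polynomial obtained by restricting $F$ to $L$, isolating the at-most-$d$ points where the zero lines of $F$ meet $L$. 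Repeating with a parallel probe line $L'$ and comparing the slope of the line through each matched pair of intersections against the $v_i$ recovers a rational approximation to each zero line of $F$.

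With these rational approximations in hand I would form thin rational quadrilaterals around each zero line, choosing the widths small enough that (i) each true zero line lies inside its quadrilateral throughout $B$, and (ii) the quadrilateral clipped to $B$ has volume below $\tfrac12$. Lemma~\ref{lem:line} then extracts the at-most-one line $L_k$ of integer points inside each clipped quadrilateral. Removing the open quadrilaterals from $B$ leaves polynomially many rational polyhedra, each disjoint from the zero set of $F$; a single evaluation of $F$ at a rational interior point of each polyhedron determines its sign and classifies it as a $P_i$ (where $F > 0$) or a $Q_j$ (where $F < 0$). Careful size bookkeeping shows that all descriptions are of polynomial size in $\ell$, $d$, and the size of $R$.

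The main obstacle will be quantifying the accuracy required of the numerical approximations. Explicit polynomial lower bounds on how quickly $|F|$ grows as one moves off its zero lines inside $B$ are needed to guarantee that (a) each thin quadrilateral really contains its zero line, so the complement is sign-constant on every integer point not already captured in some $L_k$, and (b) the pairing of intersections across the two parallel probe lines $L$ and $L'$ is unambiguous. Such quantitative estimates will play a role analogous to the one they played in the proof of Lemma~\ref{lem:cubic_y_deg3}.
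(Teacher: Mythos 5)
Your overall architecture matches the paper's: cover the zero lines of $F$ inside $B$ by thin rational quadrilaterals of volume below $\tfrac12$, extract one rational line $L_k$ per quadrilateral via Lemma~\ref{lem:line}, decompose the complement into rational polyhedra, and fix the sign of $F$ on each by evaluating at an interior point. The genuine gap is in the one step where the two arguments diverge: how the candidate zero lines are identified from rational data. Your mechanism --- recover approximate directions from the homogeneous part $h$, probe $F$ on two parallel rational lines $L, L'$, and match intersection intervals across $L$ and $L'$ by comparing slopes with the approximate directions --- is exactly the part you defer to unspecified ``explicit polynomial lower bounds,'' and it is genuinely delicate: all zero lines of $F$ are concurrent at the unknown point $\t$, so if $\t$ lies near a probe line the intersection points cluster and the slope test with approximate data can misattribute pairs; making the matching unambiguous would require separation estimates you do not supply, and the growth bounds on $|F|$ off its zero set that you invoke for containment ``throughout $B$'' are likewise not established. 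As written, the crux of the theorem is postponed rather than proved.

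The paper's proof removes this difficulty structurally rather than quantitatively. It probes $F$ precisely on the bracketing lines $x_2 = R$ and $x_2 = -R$, obtaining isolating intervals (Lemma~\ref{lem:numerical} part~\eqref{part3}, $\epsilon = \tfrac{1}{4R}$) around the roots $\alpha_1 < \dots < \alpha_r$ and $\beta_1 < \dots < \beta_s$; since every zero line passes through the single point $\t$, the top-to-bottom pairing is either order-preserving ($\t \notin S$) or order-reversing ($\t \in S$), and the proof simply takes \emph{both} families of at most $2d$ candidate lines, so no directions, no matching, and no lower bound on $|F|$ are ever needed. Containment of each true zero-line segment in its quadrilateral is then exact by convexity (its endpoints lie in the two isolating intervals), sign constancy on the remaining arrangement cells follows from continuity alone, and zero lines parallel to the probes are caught by repeating the procedure with $x_1$ and $x_2$ exchanged. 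If you want to salvage your version without the concurrency observation, note that taking \emph{all} top-bottom interval pairs (at most $d^2$ quadrilaterals, still polynomially many) also works, provided your probe lines bracket $B$ and you add a second, coordinate-swapped pass for lines parallel to them; spurious quadrilaterals are harmless, so no disambiguation is required. Separately, your claim that the real zero set of $F$ is always a union of at most $d$ lines overlooks the case where $h$ vanishes only at the origin (e.g.\ $h = x_1^2 + x_2^2$), in which the zero set is the single point $\t$; this degenerate case deserves at least a remark.
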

\begin{proof}
Since $F$ is homogeneous translatable, there exists 
$\t \in \R^2$ such that $h(\x) = F(\x + \t)$ is a homogeneous polynomial of degree at most $d$.  The zeros of $h$ lie on lines through the origin.  Therefore, the zeros of $F$ must lie on lines through $\t$.  

We consider the region $S = \{\x \in \R^2 : -R \leq x_2 \leq R\}$. Since all zero lines of $F$ pass through $\t$, the geometry of the nonnegative regions in $B$ depends on whether or not $\t \in S$.

Consider the distinct roots $\alpha_1< \alpha_2 <\dots <  \alpha_r$ and $\beta_1 < \beta_2 < \dots < \beta_s$ of the univariate polynomials $F(x_1, R)$ and $F(x_1, -R)$, respectively.  Assume without loss of generality that $r = s$, because if we encounter the case $r \ne s$, then $\t$ is on the boundary of the region, so we can simply set $R := R + 1$, which will result in $r = s$.

If $\t \notin S$, then the zero lines of $F$ do not intersect in $S$ and hence there must be a zero line from each $\alpha_i$ to each $\beta_i$ (cf.\ Figure~\ref{fig:homogeneous-zeros-translatable}a). If $\t \in S$, the zero lines intersect in $S$, and therefore the zero lines of $F$ connect each $\alpha_i$ to each $\beta_{r-i}$ (cf.\ Figure~\ref{fig:homogeneous-zeros-translatable}b). The union of those two sets of lines has cardinality at most $2d$ and contains all zero lines of $F$ except for lines parallel to the $x_2$ axis (cf.\ Figure~\ref{fig:homogeneous-zeros-translatable}c). By switching $x_1$ and $x_2$, repeating the same procedure and adding the resulting at most $2d$ lines, we thus get a set of at most $4d$ lines which contain all the zero lines of $F$.

For each of the at most $4 d$ lines we now construct a rational quadrilateral containing its intersection with $B$ and with the property that all integer points in the quadrilateral are contained on a single line. We describe this only where we fix $x_2 = \pm R$, as the case of fixing $x_1$ is similar. Consider the line passing through $(\alpha, R)$ and $(\beta, -R)$. In time and output size that is bounded by polynomial in $\ell$, and the size of $R$ and $\epsilon$ (see Lemma~\ref{lem:numerical}), we can compute a sequence of disjoint intervals, each of length smaller than $\epsilon$ containing the roots of $F(x_1, \pm R)$.  Thus, for the root $\alpha$, we can construct $\alpha^-, \alpha^+ \in \Q$  with $\alpha^- < \alpha < \alpha^+$ such that $|\alpha^+ - \alpha^-| < \epsilon$. Similarly, we can construct $\beta^-, \beta^+ \in \Q$ for $\beta$.

If we choose $\epsilon := \tfrac{1}{4R}$, then the rational quadrilateral defined by vertices $(\alpha^-, R)$, $(\alpha^+, R)$, $(\beta^-, -R)$ and $(\beta^+, -R)$ has volume less than $2\times R \tfrac{1}{4R} = \tfrac{1}{2}$. Moreover, it can be defined by inequalities with a polynomial size description. Hence by Lemma~\ref{lem:line}, the integer points in it are contained in a rational line that we can find a description of in polynomial time. Each one of those lines will define one $L_k$ in the division description.

In total, this results in at most $4d$ linear spaces $L_k$, $8d$ hyperplanes for the quadrilaterals and $4$ hyperplanes for the boundary of $B$. Apply the hyperplane arrangement algorithm of \cite{sleumer_output_1998} to enumerate all $O(d^2)$ cells of the arrangement in $O(d^3 \, \ell p(2,d))$ time. Here $\ell p(2,d)$ is the cost of running a linear program in dimension 2 with $d$ inequalities, which is in polynomial time because of the inputs are of polynomial size.  
For each cell, a signed vector of $+$, $0$, and $-$ describing if the relatively open cell satisfies $\ve a^i\cdot  \x > b_i$, $\ve a^i\cdot \x = b_i$, $\ve a^i\cdot \x < b_i$, respectively, for every hyperplane $\ve a_i\cdot \x = b_i$ in the arrangement. 
We exclude cells that are contained in the union of quadrilaterals and cells not contained in $B$ by reading these signed vectors.  This is all done in polynomial time in $d$.

We have described how to obtain at most $4d$ linear spaces $L_k$ and polynomially many rational polyhedra that contain all integer points in $B$.  Using linear programming techniques, we can determine an interior point of each of these polyhedra.  Evaluating $F$ at each interior point determines the sign of $F$ on each polyhedron. Hence this construction determines the list of polyhedra $P_i$ and $Q_j$. This finishes the result.
\end{proof}

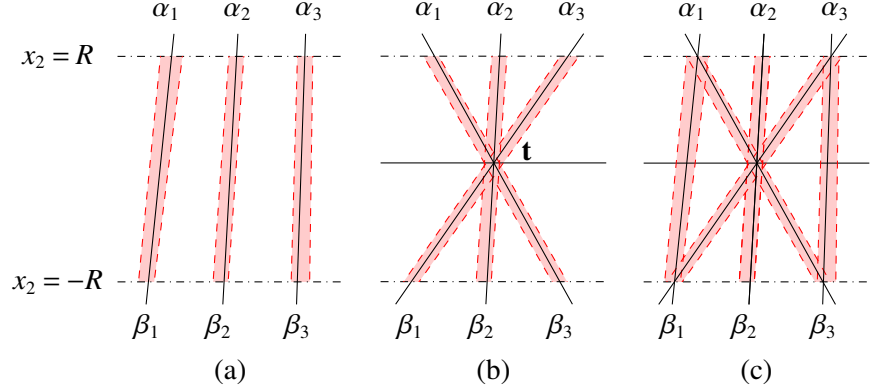
\begin{figure}
\centering
\begin{tikzpicture}[scale = 1]
\begin{scope}
\clip (0,0) rectangle (3,3);

\path [fill,red!20!white] (0.6,3.3) -- (0.25,-0.3) -- (0.45,-0.3) -- (0.9,3.3);
\path [fill,red!20!white] (1.5,3.3) -- (1.25,-0.3) -- (1.45,-0.3) -- (1.7,3.3);
\path [fill,red!20!white] (2.4,3.3) -- (2.3,-0.3) -- (2.55,-0.3) -- (2.6,3.3);

\end{scope}

\draw[dashdotted] (0,3) -- (3,3);
\draw[dashdotted] (0,0) -- (3,0);

\begin{scope}
\clip (0,0) rectangle (3,3);

\draw[dashed,red] (0.6,3.3) -- (0.25,-0.3);
\draw[dashed,red] (0.9,3.3) -- (0.45,-0.3);
\draw[dashed,red] (1.5,3.3) -- (1.25,-0.3);
\draw[dashed,red] (1.7,3.3) -- (1.45,-0.3);
\draw[dashed,red] (2.4,3.3) -- (2.3,-0.3);
\draw[dashed,red] (2.6,3.3) -- (2.55,-0.3);

\end{scope}

\draw (0.75,3.3) -- (0.38,-0.3);
\draw (1.6,3.3) -- (1.4,-0.3);
\draw (2.5,3.3) -- (2.38,-0.3);

\node at (0.65,3.6) {$\alpha_1$};
\node at (1.6,3.6) {$\alpha_2$};
\node at (2.55,3.6) {$\alpha_3$};
\node at (0.4,-0.6) {$\beta_1$};
\node at (1.35,-0.6) {$\beta_2$};
\node at (2.4,-0.6) {$\beta_3$};

\node at (-0.8,3) {$x_2 = R$};
\node at (-0.8,0) {$x_2 = -R$};

\node at (1.5,-1.2) {(a)};
\end{tikzpicture}
\quad
\begin{tikzpicture}[scale= 1]
\begin{scope}
\clip (0,0) rectangle (3,3);

\path [fill,red!20!white] (0.4,3.3) -- (2.45,-0.3) -- (2.65,-0.3) -- (0.6,3.3);
\path [fill,red!20!white] (1.5,3.3) -- (1.25,-0.3) -- (1.45,-0.3) -- (1.7,3.3);
\path [fill,red!20!white] (2.6,3.3) -- (0.1,-0.3) -- (0.3,-0.3) -- (2.85,3.3);

\end{scope}

\draw[dashdotted] (0,3) -- (3,3);
\draw[dashdotted] (0,0) -- (3,0);

\draw (0,1.58) -- (3,1.58);

\begin{scope}
\clip (0,0) rectangle (3,3);

\draw[dashed,red] (0.4,3.3) -- (2.45,-0.3);
\draw[dashed,red] (0.6,3.3) -- (2.65,-0.3);
\draw[dashed,red] (1.5,3.3) -- (1.25,-0.3);
\draw[dashed,red] (1.7,3.3) -- (1.45,-0.3);
\draw[dashed,red] (2.6,3.3) -- (0.1,-0.3);
\draw[dashed,red] (2.85,3.3) -- (0.3,-0.3);

\end{scope}

\draw (0.55,3.3) -- (2.55,-0.3);
\draw (1.6,3.3) -- (1.4,-0.3);
\draw (2.7,3.3) -- (0.2,-0.3);

\node at (0.65,3.6) {$\alpha_1$};
\node at (1.6,3.6) {$\alpha_2$};
\node at (2.55,3.6) {$\alpha_3$};
\node at (0.4,-0.6) {$\beta_1$};
\node at (1.35,-0.6) {$\beta_2$};
\node at (2.4,-0.6) {$\beta_3$};

\node at (1.94,1.74) {$\t$};

\node at (1.5,-1.2) {(b)};
\end{tikzpicture}
\quad
\begin{tikzpicture}[scale = 1]
\begin{scope}
\clip (0,0) rectangle (3,3);

\path [fill,red!20!white] (0.6,3.3) -- (0.25,-0.3) -- (0.45,-0.3) -- (0.9,3.3);
\path [fill,red!20!white] (1.5,3.3) -- (1.25,-0.3) -- (1.45,-0.3) -- (1.7,3.3);
\path [fill,red!20!white] (2.4,3.3) -- (2.3,-0.3) -- (2.55,-0.3) -- (2.6,3.3);

\path [fill,red!20!white] (0.4,3.3) -- (2.45,-0.3) -- (2.65,-0.3) -- (0.6,3.3);
\path [fill,red!20!white] (1.5,3.3) -- (1.25,-0.3) -- (1.45,-0.3) -- (1.7,3.3);
\path [fill,red!20!white] (2.6,3.3) -- (0.1,-0.3) -- (0.3,-0.3) -- (2.85,3.3);

\end{scope}

\draw[dashdotted] (0,3) -- (3,3);
\draw[dashdotted] (0,0) -- (3,0);

\draw (0,1.58) -- (3,1.58);

\begin{scope}
\clip (0,0) rectangle (3,3);

\draw[dashed,red] (0.4,3.3) -- (2.45,-0.3);
\draw[dashed,red] (0.6,3.3) -- (2.65,-0.3);
\draw[dashed,red] (1.5,3.3) -- (1.25,-0.3);
\draw[dashed,red] (1.7,3.3) -- (1.45,-0.3);
\draw[dashed,red] (2.6,3.3) -- (0.1,-0.3);
\draw[dashed,red] (2.85,3.3) -- (0.3,-0.3);

\draw[dashed,red] (0.6,3.3) -- (0.25,-0.3);
\draw[dashed,red] (0.9,3.3) -- (0.45,-0.3);
\draw[dashed,red] (1.5,3.3) -- (1.25,-0.3);
\draw[dashed,red] (1.7,3.3) -- (1.45,-0.3);
\draw[dashed,red] (2.4,3.3) -- (2.3,-0.3);
\draw[dashed,red] (2.6,3.3) -- (2.55,-0.3);

\end{scope}

\draw (0.55,3.3) -- (2.55,-0.3);
\draw (1.6,3.3) -- (1.4,-0.3);
\draw (2.7,3.3) -- (0.2,-0.3);

\draw (0.75,3.3) -- (0.38,-0.3);
\draw (1.6,3.3) -- (1.4,-0.3);
\draw (2.5,3.3) -- (2.38,-0.3);

\node at (0.65,3.6) {$\alpha_1$};
\node at (1.6,3.6) {$\alpha_2$};
\node at (2.55,3.6) {$\alpha_3$};
\node at (0.4,-0.6) {$\beta_1$};
\node at (1.35,-0.6) {$\beta_2$};
\node at (2.4,-0.6) {$\beta_3$};

\node at (1.5,-1.2) {(c)};
\end{tikzpicture}

\caption{
This figure illustrates the techniques in Theorem~\ref{thm:partition_Z2-translatable}. The black solid lines are the zero lines of $F$. We construct the shaded quadrilaterals using numerical approximations on the roots $\alpha_i$ and $\beta_i$ to an accuracy such that they each contain at most one line of integer points.  (a) If the zero lines of $F$ do not intersect in $S$, then each zero line passes through $(\alpha_i, R)$, $(\beta_i, -R)$ for some $i$. (b) If the zero lines of $F$ intersect in $S$, then they intersect in a common intersection point $\t$. Each zero line passes through $(\alpha_i, R)$, $(\beta_{r-i}, -R)$ for some $i$, except for a potential horizontal zero line. (c) To avoid having to know whether the zero lines intersect in $S$ or not, we simply consider all potential lines from both cases. Note that this still does not include a potential horizontal zero line.  This line is covered by switching $x_1$ with $x_2$ and repeating the same procedure.
}
\label{fig:homogeneous-zeros-translatable}
\end{figure}

Recall that for a convex set $C$, $f$ is quasiconvex on $C$ if and only if $C \cap S_{\leq \omega}^f$ is convex $\forall \, \omega \in \R$. Moreover, $f$ is quasiconcave on $C$ if and only if $C\cap S_{> \omega}^f$ is convex $\forall \, \omega$.

\begin{corollary}
\label{cor:regions-division}
Let $f$ be a homogeneous translatable polynomial of degree $d \ge 2$ with integer coefficients. In time and output size bounded by polynomial in $d$, the size of $R$, and the size of the coefficients of $f$, we can find a polynomial number of rational polyhedra $P_i$, $Q_j$ and rational lines $L_k$ such that $f$ is quasiconvex on $P_i$, quasiconcave on $Q_j$, and 
\begin{equation}
B\cap \Z^2 = \left(\bigcup_{i=1}^{\ell_1} P_i \cup \bigcup_{j=1}^{\ell_2} Q_j \cup \bigcup_{k=1}^{\ell_3} L_k\right) \cap \Z^2 .
\end{equation}
In particular, for a given $\omega \in \R$, this yields a division description for $S_{\le \omega}^f$ on $B$.
\end{corollary}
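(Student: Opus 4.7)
The plan is to use the bordered Hessian determinant $D_f$ as the central diagnostic, and to invoke Theorem~\ref{thm:partition_Z2-translatable} to obtain polyhedra on which the sign of $D_f$ is constant. First I would compute $D_f$ directly from the coefficients of $f$; since $d$ is fixed and the entries of the bordered Hessian are low-degree polynomials in $f$ and its derivatives, $D_f$ has integer coefficients of size polynomial in those of $f$. I would then test in polynomial time whether $D_f \equiv 0$ by reading its coefficients.

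If $D_f \equiv 0$, then since the bordered Hessian determinant is translation-invariant, the homogeneous polynomial $h$ with $f(x+t)=h(x)$ satisfies $D_h \equiv 0$ as well, so Lemma~\ref{lem:D_f=0} gives $h(x)=(c^T x)^d$ and hence $f(y)=(c^T(y-t))^d$ is a power of an affine function. Its sublevel sets are either empty, a closed half-plane, or a closed slab, always convex, so $f$ is quasiconvex on all of $\R^2$. In this case I would simply return $P_1 = B$, with no $Q_j$ and no $L_k$; notably, I would not need to compute $t$ or $c$ explicitly.

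If $D_f \not\equiv 0$, then Corollary~\ref{cor:Df_homogeneous} guarantees that $D_f$ is itself homogeneous translatable of degree $3d-4$ with polynomially-sized integer coefficients, so I would feed $F := D_f$ into Theorem~\ref{thm:partition_Z2-translatable}. This produces a polynomial-size list of rational polyhedra on which $D_f < 0$ (which I take as my $P_i$), polyhedra on which $D_f > 0$ (which I take as my $Q_j$), and lines $L_k$, whose union covers $B \cap \Z^2$. Lemma~\ref{lem:determine-quasiconvex} then yields quasiconvexity of $f$ on each $P_i$ and quasiconcavity on each $Q_j$. That this is a division description of every $S_{\le \omega}^f$ on $B$ in the sense of Definition~\ref{def:division-description} is then immediate: on $P_i$ quasiconvexity makes $P_i \cap S_{\le \omega}^f$ convex for every $\omega$, and on $Q_j$ quasiconcavity makes $Q_j \setminus S_{\le \omega}^f$ convex.

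The main obstacle I foresee is a subtle gap between the conclusion of Theorem~\ref{thm:partition_Z2-translatable} and the hypothesis of Lemma~\ref{lem:determine-quasiconvex}: the theorem only asserts the sign of $D_f$ at the integer points of the polyhedra, whereas the lemma needs strict sign on a convex set on which $f$ is twice differentiable. To close this gap I would look inside the construction in the proof of Theorem~\ref{thm:partition_Z2-translatable}: each polyhedron in its output is a cell of a hyperplane arrangement from which every quadrilateral covering a zero line of $D_f$ has been excluded, so $D_f$ cannot vanish on the interior of any $P_i$ or $Q_j$; Lemma~\ref{lem:determine-quasiconvex} then applies on that open convex interior and its conclusion of quasiconvexity, respectively quasiconcavity, extends to the closure, which is what the division description requires.
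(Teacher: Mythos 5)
Your proposal is correct and follows essentially the same route as the paper: in the main case $D_f \not\equiv 0$ it combines Corollary~\ref{cor:Df_homogeneous}, Theorem~\ref{thm:partition_Z2-translatable} applied to $F = D_f$, and Lemma~\ref{lem:determine-quasiconvex}, and your remark about sign constancy on the interiors of the retained cells is precisely the justification the paper implicitly relies on when passing from the integer-point statement of Theorem~\ref{thm:partition_Z2-translatable} to quasiconvexity/quasiconcavity on the closed polyhedra. The only (harmless) deviation is the degenerate case $D_f \equiv 0$, where the paper applies Theorem~\ref{thm:partition_Z2-translatable} to $f$ itself to split by the sign of $f$, whereas you observe that $f(\x) = ({\ve c}^T(\x - \t))^d$ is quasiconvex on all of $\R^2$ and simply take $P_1 = B$, which is a valid simplification.
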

\begin{proof}
If $D_f \equiv 0$, then by Lemma~\ref{lem:D_f=0} $f$ has the form $f(\x) = (\ve c^T(\x - \t))^d$.  This function has one line of zeros and has the property that whenever $f > 0$, $f$ is convex and whenever $f < 0$, $f$ is concave.  By Theorem~\ref{thm:partition_Z2-translatable}, we hence divide $B\cap \Z^2$ into polyhedra where $f$ is quasiconvex or quasiconcave and some rational lines containing integer points.
If instead $D_f \not \equiv 0$, then by Corollary~\ref{cor:Df_homogeneous}, $D_f$ is homogeneous translatable of degree $3d - 4$. By Theorem~\ref{thm:partition_Z2-translatable}, we can cover $B \cap \Z^2$ with polyhedra where $D_f<0$ or $D_f >0$ and just lines.
Applying Lemma~\ref{lem:determine-quasiconvex}, shows that $f$ is quasiconvex or quasiconcave in these polyhedra.
By the definition of quasiconvexity, this yields a division description for $S_{\le \omega}^f$ for all $\omega$.
\end{proof}

\label{sec:homogeneous-main}

\begin{proof}[Proof of Theorem~\ref{thm:min_homogeneous_bounded}]
If $d \le 1$, the problem is a particular case of integer linear programming, which is polynomially solvable in fixed dimension \cite{Scarf1981,Scarf1981a,lenstra_integer_1983}. Assume that $d \ge 2$.
By Corollary~\ref{cor:regions-division}, we can construct a polynomial number of rational polyhedra with polynomially bounded size where $f$ is quasiconvex or quasiconcave, and linear spaces, that cover $P \cap \Z^2$.  This description yields a division description for any $\omega$.
We can then solve our problem in polynomial time using Theorem~\ref{thm:dd-min}. \end{proof}

The proof of Theorem~\ref{thm:min_homogeneous_bounded} is more general than is needed here.  In fact, the same proof
shows that we can also minimize a polynomial $f(x_1, x_2)$ whenever $D_f$ is homogeneous translatable and is not identically zero.  Minimizing general quadratics in two variables can then be done in this manner, because either $D_f$ has those properties, or we can compute a rational constant $c_0$ such that $f(\x) + c_0$ is homogeneous translatable.

\section{Cubic Polynomials and Unbounded Polyhedra}
\label{sec:cubic-unbounded}

In this section, we prove Theorem~\ref{thm:min_cubic_unbounded}, i.e., we show how to solve Problem~\eqref{eq:main} in two variables when $f$ is cubic and $P$ is allowed to be unbounded.  The behavior of the objective function on feasible directions of unboundedness is mostly determined by the degree three homogeneous part of $f$.  Hence we will study degree three homogeneous polynomials before proceeding with the proof of Theorem~\ref{thm:min_cubic_unbounded}

A homogeneous polynomial of degree three factors (over the reals) either into a linear function and an irreducible quadratic polynomial, or three linear functions.  We distinguish between the cases where the linear functions are distinct, have multiplicity two, or have multiplicity three, in part, by analyzing the  \emph{discriminant} $\Delta_d$ of certain degree $d$ polynomials.
In particular, if $p(x) = \sum_{i=0}^d c_i x^i$, with $c_d \ne 0$, then $\Delta_2 = c_1^2 - 4 c_0^{} c_2^{}$ and $\Delta_3 = c_1^2 c_2^2 - 4 c_1^3 c_3^{} - 4 c_0^{} c_2^3 - 27 c_0^2 c_3^2 + 18 c_0^{} c_1^{} c_2^{} c_3^{}$.

\begin{lemma}[repeated lines]
\label{lem:repeated-lines}
Let $h(x,y) = c_0 x^3 + c_1 x^2 y + c_2 x y^2 + c_3 y^3$ be a homogeneous polynomial of degree 3 with integer coefficients, that is, $c_i\in \Z$ for $i=0,1,2,3$.  Suppose $h \not \equiv 0$ and there exist $a_1,b_1,a_2,b_2 \in \R$ such that 
\begin{equation}
\label{eq:homogeneous-two-lines}
h(x,y) = (a_1 x + b_1y)^2 (a_2x + b_2y).
\end{equation}
Then, in polynomial time we can determine $a_i', b_i' \in \Q$, for $i = 1, 2$,  such that $h(x, y) = d \, (a_1' x + b_1' y)^2 (a_2' x + b_2' y)$ for some $d \in \R$.
\end{lemma}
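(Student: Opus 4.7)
The plan is to reduce to a univariate problem by dehomogenizing and then exploit the fact that the coefficients of the GCD of a rational polynomial with its derivative are rational, so the location of the repeated root can be computed exactly.

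First I would dispose of degenerate cases where $h(x,y)$ is divisible by $x$ or $y$. Observe that the coefficient $c_3$ of $y^3$ equals $b_1^2 b_2$ and the coefficient $c_0$ of $x^3$ equals $a_1^2 a_2$. If $c_3 = 0$ then one of $b_1, b_2$ vanishes, so $h$ is divisible by $x$, and by factoring out $x$ the problem reduces to recognizing that the resulting quadratic form in $(x,y)$ is either a perfect square (when $a_1,b_1$ give the repeated factor) or has $x$ as another factor; both can be handled directly from the coefficients of $h$. An analogous argument (swapping the roles of $x$ and $y$) handles $c_0 = 0$. Thus we may assume $c_3 \neq 0$.

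Now set $\tilde h(t) := h(1,t) = c_0 + c_1 t + c_2 t^2 + c_3 t^3 \in \Z[t]$. Under the factorization hypothesis we have $\tilde h(t) = c_3(t - t^*)^2(t - s)$, where $t^* = -a_1/b_1$ (or $-a_1/b_1 = -a_2/b_2$ in the triple-root case) and $s = -a_2/b_2$. The key observation is that a multiple root of a polynomial with integer coefficients need not be rational in general, but it is a root of the polynomial $g(t) := \gcd(\tilde h(t), \tilde h'(t)) \in \Q[t]$, which can be computed in polynomial time by the Euclidean algorithm over $\Q[t]$. Because $\tilde h$ has a double (or triple) root, $\deg g \in \{1,2\}$: in the double-root case $g$ is linear, so $t^*$ is rational; in the triple-root case $g$ is a quadratic with discriminant zero, so $t^* = -g_1/(2g_2)$ is again rational.

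Having obtained a rational double root $t^*$, I would recover the simple root by Vieta's formula, $s = -c_2/c_3 - 2 t^* \in \Q$, and then reconstruct
\[
h(x,y) \;=\; x^3 \tilde h(y/x) \;=\; c_3 \,(y - t^* x)^2 \,(y - s x),
\]
yielding the required rational coefficients $a_1' = -t^*$, $b_1' = 1$, $a_2' = -s$, $b_2' = 1$, and $d = c_3$, after clearing denominators of $t^*, s$ if desired. A final evaluation check at one nonzero point confirms correctness. The main technical point is not the computation itself—the Euclidean algorithm is standard and runs in polynomial time with polynomially bounded intermediate coefficient growth—but rather the case analysis that isolates when we must dehomogenize by $x=1$ versus $y=1$ versus recognize that $h$ is already a monomial multiple of a product of coordinate factors; each of these cases is detectable in constant time from the signs/vanishing of $c_0$ and $c_3$.
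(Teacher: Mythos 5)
Your proof is correct, and in its core step it takes a genuinely different route from the paper. Both arguments first dispose of the case where a coordinate axis is a zero line and then dehomogenize to a univariate cubic with a repeated real root; the paper reduces to $c_0\neq 0$ and works with $h(x,1)$, while you reduce to $c_3\neq 0$ and work with $h(1,t)$, which is an immaterial difference. The divergence is in how rationality of the repeated root is established: the paper passes to the depressed cubic and uses the explicit identity $\Delta_3 = c_0^4(-4p^3-27q^2)=0$ to show $\sqrt{-p/3}=\tfrac{3|q|}{2|p|}\in\Q$, obtaining closed-form rational expressions for all roots; you instead compute $g=\gcd(\tilde h,\tilde h')$ by the Euclidean algorithm over $\Q[t]$ and read the double root off a rational linear (or discriminant-zero quadratic) factor. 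Your argument is more structural and would generalize to detecting the repeated linear factor of a bounded-degree form without any discriminant formulas, whereas the paper's explicit root expressions are reused elsewhere (in the Type classification of Lemma~\ref{lem:determine-case}); for fixed degree both are trivially polynomial time. The remaining steps --- Vieta's formula for the simple root, the rehomogenization $h(x,y)=x^3\tilde h(y/x)$, and the handling of the degenerate cases $c_0=0$ or $c_3=0$ via factoring out a coordinate and testing the residual quadratic form for a square or a coordinate factor --- are all sound and match the paper in substance.
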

\begin{proof}

First, suppose that $c_0 = c_3 = 0$.  Then 
$h(x, y) = x y (c_1 x + c_2 y)$, which implies $c_1 = 0$ or $c_2 = 0$ by equation~\eqref{eq:homogeneous-two-lines}. If $c_1 = 0$, then we can take $a_1' = 0$, $b_1' = 1$, $a_2' = c_2$ and $b_2' = 0$. The case $c_2 = 0$ follows by switching $x$ and $y$.  Henceforth, we consider the case where either $c_1\neq 0$ or $c_3 \neq 0$.  Since these cases are symmetric in switching $x$ and $y$, we assume without loss of generality that $c_0 \neq 0$.

Since $c_0 \ne 0$, by equation~\eqref{eq:homogeneous-two-lines} we have that $a_i \ne 0$ for $i = 1, 2$. Consider the rational cubic polynomial $h(x, 1) = c_0 x^3 + c_1 x^2 + c_2 x + c_3$. By equation~\eqref{eq:homogeneous-two-lines} it has a repeated root, so its discriminant $\Delta_3$ is equal to zero (see, for example, \cite{irving_integers_2004}).
Let $p = - \frac{c_1^2}{3 c_0^2} + \frac{c_2}{c_0}$ and $q = \frac{2 c_1^3}{27 c_0^3} - \frac{c_1 c_2}{3 c_0^2} + \frac{c_3}{c_0}$. Since $\Delta_3 = 0$, by \cite{irving_beyond_2013}
the roots of $h(x, 1)$ are of the form $r - \frac{c_1}{3 c_0}$, where either $r = 0$, $r = \pm \sqrt{-p/3}$ or $r = \pm 2 \sqrt{-p/3}$. Since $\Delta_3 = c_0^4(- 4 p^3 - 27 q^2) = 0$, we have that if $p \ne 0$, then $\sqrt{-p/3} = \frac{3 |q|}{2 |p|}$, which is rational. Thus all roots are rational and can be computed explicitly in terms of the coefficients of $h$.

Finally, if $r_1$ is the double root of $h(x, 1)$ and $r_2$ is the single root, then $a'_1 = 1$, $b'_1 = - r_1$, $a'_2 = 1$ and $b'_2 = - r_2$ are rational and $h(x, y) = d \, (a_1' x + b_1' y)^2 (a_2' x + b_2' y)$ for $d = a_1^2 a_2$.
\end{proof}

\begin{figure}
\centering
\begin{minipage}[b]{0.2\textwidth}
\centering
\includegraphics[width=\textwidth,trim={7mm 0 0 0}]{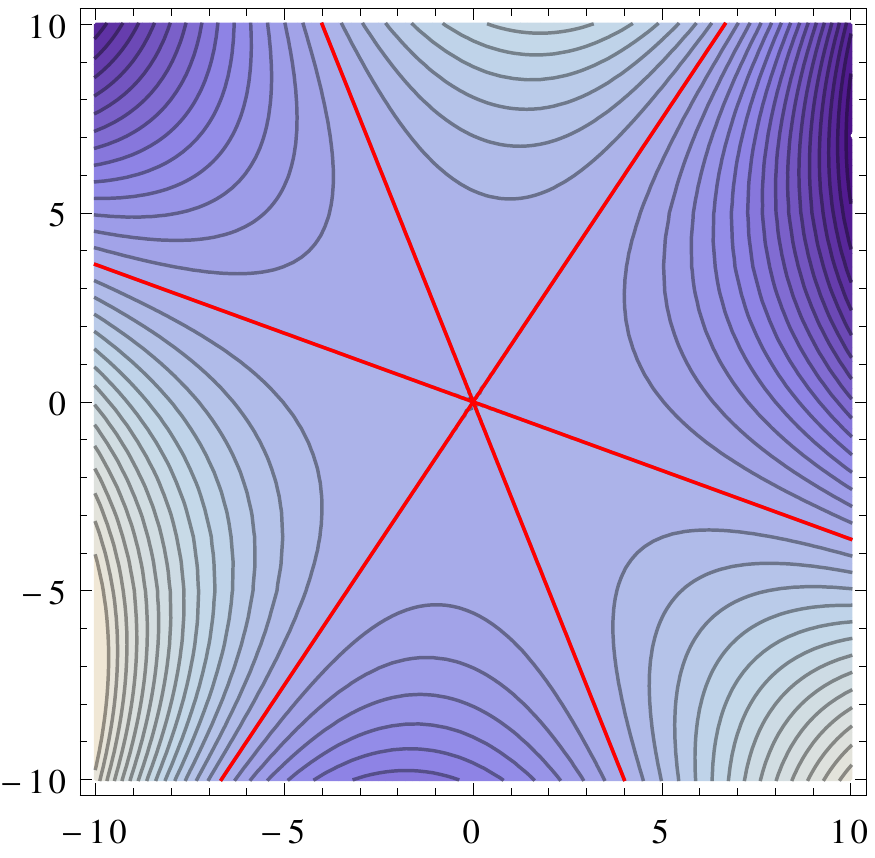}
\end{minipage}
\quad
\begin{minipage}[b]{0.2\textwidth}
\centering
\includegraphics[width=\textwidth,trim={7mm 0 0 0}]{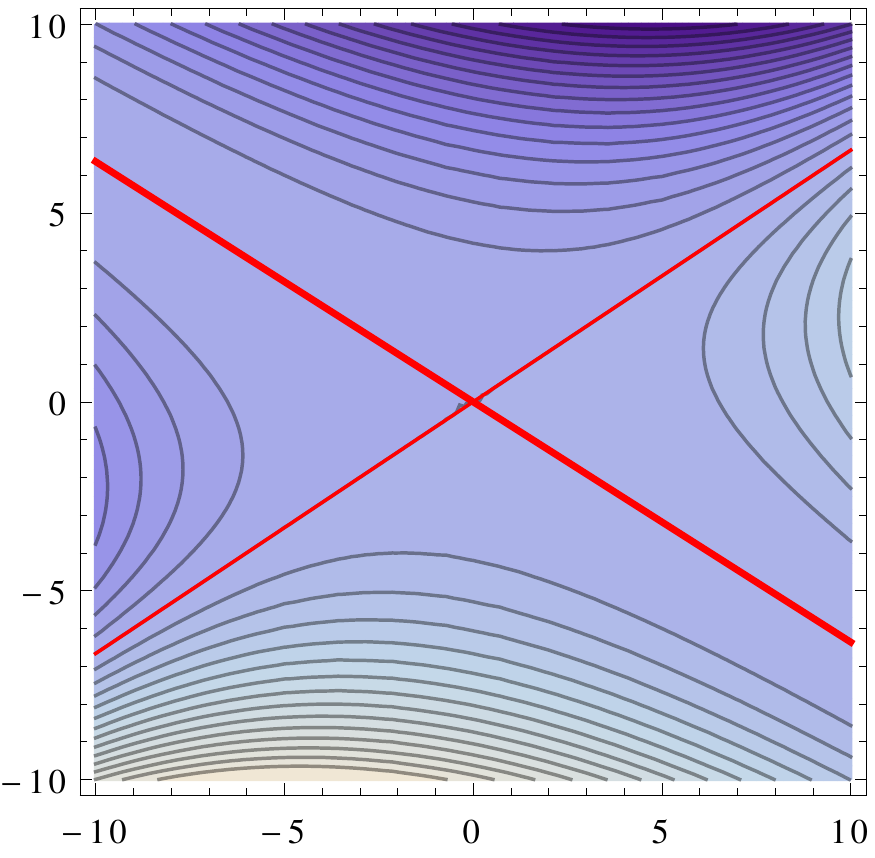}
\end{minipage}
\quad
\begin{minipage}[b]{0.2\textwidth}
\centering
\includegraphics[width=\textwidth,trim={7mm 0 0 0}]{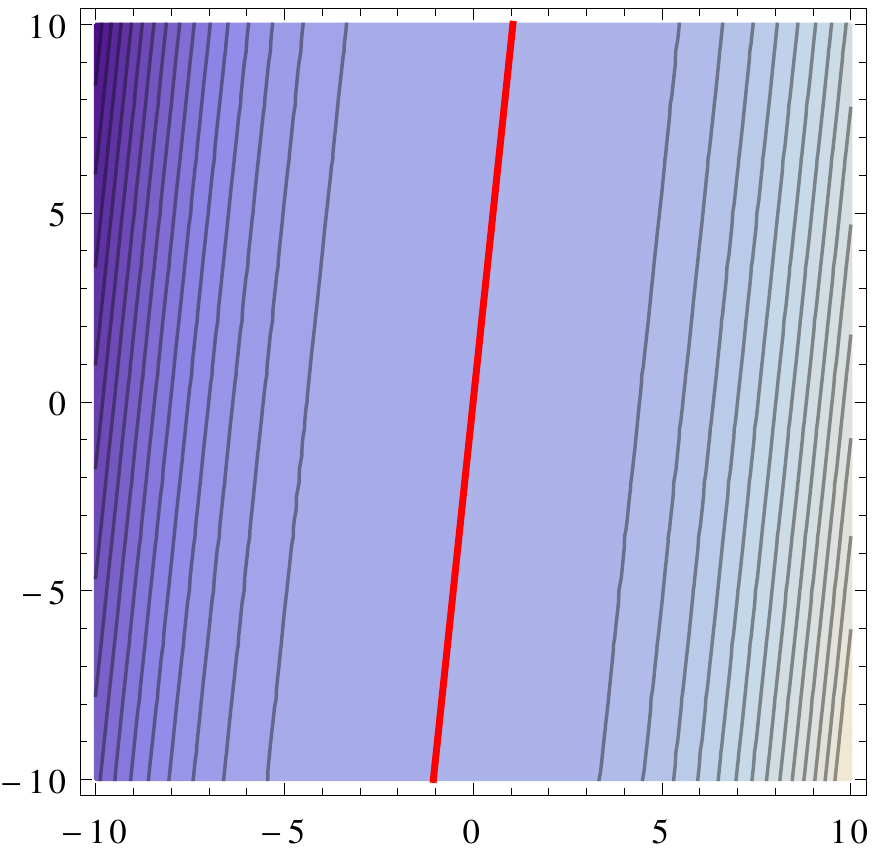}
\end{minipage}
\quad
\begin{minipage}[b]{0.2\textwidth}
\centering
\includegraphics[width=\textwidth,trim={7mm 0 0 0}]{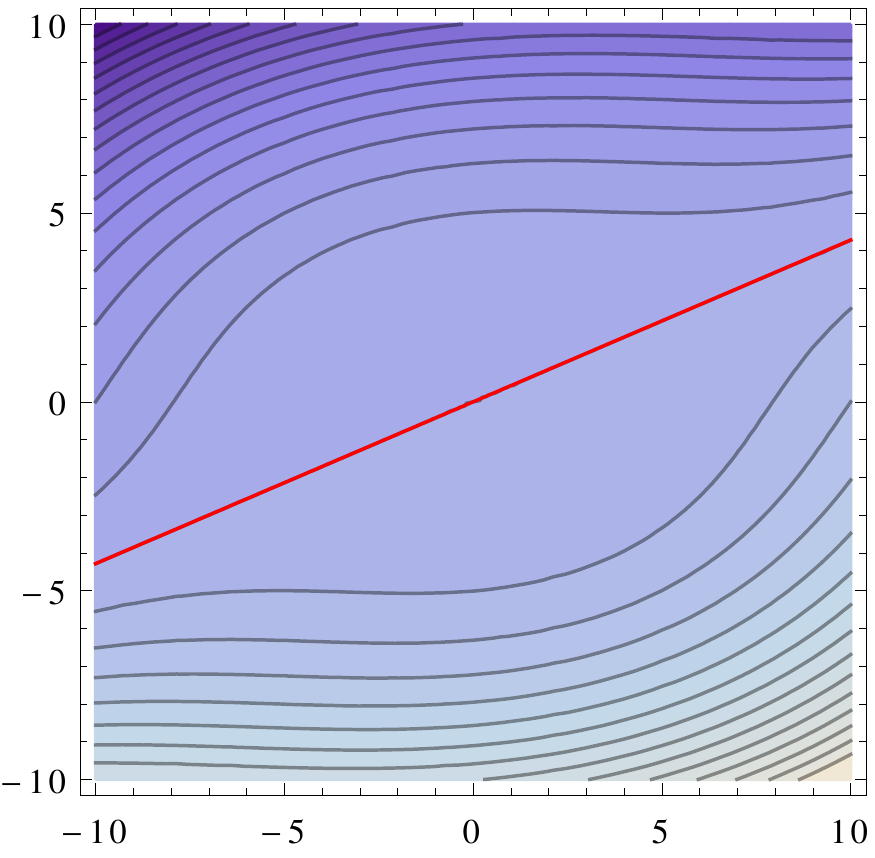}
\end{minipage}
\quad
\begin{minipage}[b]{0.2\textwidth}
\centering
\begin{tikzpicture}[scale= .32]
    \draw(-5,-5) -- (5,-5) -- (5,5) -- (-5,5) -- (-5,-5);
    \draw[red, thick] (-5,3) -- (5,-3);
    \draw[red, thick](-3,-5) -- (3,5);
    \draw[red, thick] (-2,5) -- (2,-5);
    \node at (-3,3) {$+$};
    \node at (0,3) {$-$};
    \node at (3,1) {$+$};
    \node at (3,-3) {$-$};
    \node at (0,-3) {$+$};
    \node at (-3,-1) {$-$};
    \path[fill=green, draw = black, opacity = 0.5] (5,-2) -- (0,0) -- (3,5)--(5,5);
    \node at (3.3,2) {{\small{$\rec(P)$}}};
\end{tikzpicture}
\\(i)
\end{minipage}
\quad
\begin{minipage}[b]{0.2\textwidth}
\centering
\begin{tikzpicture}[scale= .32]
    \draw(-5,-5) -- (5,-5) -- (5,5) -- (-5,5) -- (-5,-5);
    \draw[red, ultra thick](-5,3) -- (5,-3);
    \draw[red, thick](-5,-3) -- (5,3);
    \node at (3,0) {$-$};
    \node at (-3,0) {$+$};
    \node at (0,-3) {$-$};
    \node at (0,3) {$+$};
    \path[fill=green, draw = black, opacity = 0.5] (-5,-2) -- (0,0) -- (3,5) -- (-5,5);
    \node at (-3,4) {{\small{$\rec(P)$}}};
\end{tikzpicture}
\\(ii)
\end{minipage}
\quad
\begin{minipage}[b]{0.2\textwidth}
\centering
\begin{tikzpicture}[scale= .32]
    \draw(-5,-5) -- (5,-5) -- (5,5) -- (-5,5) -- (-5,-5);
    \draw[red, ultra thick](-1,-5) -- (1,5);
    \node at (3,-1) {$-$};
    \node at (-3,1) {$+$};
    \path[fill=green, draw = black, opacity = 0.5] (-5,-3) -- (0,0) -- (1,5) -- (-5,5);
    \node at (-3,4) {{\small{$\rec(P)$}}};
\end{tikzpicture}
\\(iii)
\end{minipage}
\quad
\begin{minipage}[b]{0.2\textwidth}
\centering
\begin{tikzpicture}[scale= .32]
    \draw(-5,-5) -- (5,-5) -- (5,5) -- (-5,5) -- (-5,-5);
    \draw[red, thick](-5,-2) -- (5,2);
    \node at (-1,3) {$+$};
    \node at (1,-3) {$-$};
    \path[fill=green, draw = black, opacity = 0.5] (-5,3) -- (0,0) -- (5,2) -- (5,5) -- (-5,5);
    \node at (2,4) {{\small{$\rec(P)$}}};
\end{tikzpicture}
\\(iv)
\end{minipage}

\caption{
The top four plots are contour plots for examples of the four classes of cubic homogeneous polynomials as described in the proof of Lemma~\ref{lem:determine-case}. The bottom four plots depict the sign of the polynomial. The red lines are the zero level set. The polynomial is zero on (i) three distinct lines, (ii) one single and one repeated line, (iii) a triple line or (iv) a single line. The thicker red line in (ii) and (iii) is the zero line with multiplicity 2 and 3 respectively. If the function is zero on a single line only as in (iv), then it is the product of a line and a quadratic polynomial that is irreducible over the reals.
Examples of $\rec(P)$ as discussed in Case~3 of the proof of Theorem~\ref{thm:min_cubic_unbounded} are depicted in green.
}
\label{fig:cubic-homogeneous-classes}
\end{figure}

If $h$ is a cubic homogeneous bivariate polynomial, then a linear factor always factors out over the reals.  Thus, there exist real numbers $a_i, b_i$ for $i=1,2,3$, with $(a_i, b_i)$ and $(a_j, b_j)$ linearly independent if $i \neq j$, a quadratic irreducible polynomial $q(x,y)$, and $d \in \R$ such that it is of one of the following types:
\begin{enumerate}[\hspace{1cm}Type (i)]
\item $h(x, y) = (a_1 x + b_1 y)\,(a_2 x + b_2 y)\,(a_3 x + b_3 y)$,
\label{it:L1-L2-L3}
\item $h(x, y) = (a_1 x + b_1 y)^2\,(a_2 x + b_2 y)$,
\label{it:L1-L2}
\item $h(x, y) = d \, (a_1 x + b_1 y)^3$,
\label{it:L1}
\item $h(x,y) = (a_1 x + b_1 y) \, q(x, y)$.
\label{it:L-IR}
\end{enumerate}
An example of each type is shown in Figure~\ref{fig:cubic-homogeneous-classes}.

\begin{lemma}
\label{lem:determine-case}
Let $h$ be a cubic homogeneous bivariate polynomial. In polynomial time, we can determine which of the four Types \eqref{it:L1-L2-L3}-\eqref{it:L-IR} the polynomial $h$ belongs to.  Furthermore, if it is of Type~\eqref{it:L1-L2} or Type~\eqref{it:L1}, in polynomial time, we can compute rational $a_i, b_i$, $i=1,2$, satisfying the equation.
\end{lemma}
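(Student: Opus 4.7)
The plan is to dehomogenize $h$ to a univariate cubic and classify it via its discriminant, then use gcd computations to separate the two repeated-root cases. WLOG, by swapping the roles of $x$ and $y$, assume $c_3 \neq 0$ (the case $c_0 = c_3 = 0$ is handled directly: $h(x,y) = xy(c_1 x + c_2 y)$ is of Type \eqref{it:L1-L2-L3} if $c_1, c_2$ are both nonzero, and of Type \eqref{it:L1-L2} otherwise). Set $p(x) := h(x,1) = c_3 + c_2 x + c_1 x^2 + c_0 x^3$ (or $c_0 x^3 + \ldots$ depending on normalization), which is a nontrivial integer polynomial; the four types of $h$ correspond exactly to the possible splitting patterns of $p$ over $\R$.

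Compute the discriminant $\Delta_3$ of $p$ in polynomial time from the integer coefficients. Then: if $\Delta_3 > 0$, $p$ has three distinct real roots, so $h$ is of Type \eqref{it:L1-L2-L3}; if $\Delta_3 < 0$, $p$ has one real root and a pair of complex conjugate roots, so $h$ is of Type \eqref{it:L-IR}; if $\Delta_3 = 0$, then $h$ is of Type \eqref{it:L1-L2} or \eqref{it:L1}. To separate these last two, compute $g(x) := \gcd(p(x), p'(x))$ over $\Q$ (polynomial in the input size). Since $\Delta_3 = 0$, $g$ has degree $1$ in the double-root case (Type \eqref{it:L1-L2}) and degree $2$ in the triple-root case (Type \eqref{it:L1}).

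For the factorization, in Type \eqref{it:L1} the cubic $p$ has a unique triple root $r = -c_1/(3c_0)$ (from expanding $c_0(x-r)^3$), which is rational, so we may return $a_1 = 1$, $b_1 = -r$ (rescaled to clear denominators). In Type \eqref{it:L1-L2}, we invoke Lemma~\ref{lem:repeated-lines} directly, which produces the desired rational $a_1, b_1, a_2, b_2$ in polynomial time (alternatively, read the double root off as the rational root of $g$, and the simple root as the rational root of $p/g$).

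There is no real obstacle in this argument; the computations are all standard polynomial-time manipulations of integer polynomials of degree at most three (discriminant, gcd over $\Q$, evaluation of a closed-form root). The only subtlety is bookkeeping in the degenerate cases where leading coefficients vanish or $x$ or $y$ itself is a factor, which is handled by the preliminary swap and the $c_0 = c_3 = 0$ reduction above before applying the discriminant/gcd classification.
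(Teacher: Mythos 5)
Your proposal is correct and follows essentially the same route as the paper: dehomogenize to a univariate cubic, read off Type~\eqref{it:L1-L2-L3} and Type~\eqref{it:L-IR} from the sign of $\Delta_3$, distinguish Type~\eqref{it:L1-L2} from Type~\eqref{it:L1} by a root-multiplicity test (you use $\deg\gcd(p,p')$ where the paper checks whether the rational root of $p''$ is also a root of $p'$ and $p$ --- both are fine), and obtain the rational factors from Lemma~\ref{lem:repeated-lines} or the closed-form triple root. One bookkeeping correction: the cubic discriminant requires the dehomogenized polynomial to have degree exactly three, so after disposing of the case $c_0=c_3=0$ you must set to $1$ the variable complementary to the nonvanishing pure-power coefficient --- with $c_3\neq 0$ (coefficient of $y^3$) you should work with $h(1,y)=c_3y^3+\cdots$, not with $h(x,1)$, whose leading coefficient is $c_0$ and may still vanish.
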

\begin{proof}
We show how to determine which of the Types~\eqref{it:L1-L2-L3}-\eqref{it:L-IR} the polynomial belongs to.
Suppose first that the coefficient of $y^3$ is zero. Then $x = 0$ is one of the zero lines of the polynomials and we can factor out $x$, leaving us with a homogeneous polynomial of degree 2. If $x$ factors out again, then depending on whether it factors out a third time or not, we are either in Type \eqref{it:L1-L2} or \eqref{it:L1}, because either $h(x, y) = a_1^2 x^2 (a_2 x + b_2 y)$, $b_2 \ne 0$, or $h(x, y) = a_1^3 x^3$.
If $x$ does not factor out a second time, then we look at the discriminant $\Delta_2$ of the degree 2 polynomial where we set $x = 1$. It is well known that the sign of $\Delta_2$ determines whether $h(1, y)$ has $2$, $1$ or $0$ distinct real roots (see, for example, \cite{irving_integers_2004}).
Remember that since $h$ is homogeneous, $h(1, \bar y) = 0$ if and only if $h(\lambda, \lambda \bar y) = 0$ $\forall \, \lambda \ge 0$. Thus, if $\Delta_2 > 0$ we are in Type~\eqref{it:L1-L2-L3}, if $\Delta_2 = 0$ in Type~\eqref{it:L1-L2}, and if $\Delta_2 < 0$ in Type~\eqref{it:L-IR}.

Let us now look at the case where $x = 0$ is not a zero line of the polynomial. Set $x = 1$ and consider the discriminant $\Delta_3$ of the resulting polynomial. It is well known that the sign of $\Delta_3$ determines whether $h(1, y)$ has three distinct real roots, one real root and two complex conjugate roots, or if all roots are real and at least two of them coincide (see, for example, \cite{irving_integers_2004}).
Thus, if $\Delta_3 > 0$, we are in Type \eqref{it:L1-L2-L3}, and if $\Delta_3 < 0$, we are in Type \eqref{it:L-IR}. Finally, if $\Delta_3 = 0$ we are either in Type~\eqref{it:L1-L2} or \eqref{it:L1}. To distinguish between those two, note that it is possible to establish the multiplicity of a root of a univariate polynomial by checking whether it is also a root of its derivatives. Thus compute the root of the second derivative. Note that it is a rational expression in terms of the coefficients of the original polynomial, so it is rational. If it is also a root of the first derivative and the original polynomial (with $x = 1$), then it is a triple root, so we are in Type~\eqref{it:L1}. Otherwise, we are in Type~\eqref{it:L1-L2}.
Hence, in polynomial time, we can determine which case we are in.

Finally, by Lemma~\ref{lem:repeated-lines}, if $h$ is of Type~\eqref{it:L1-L2} or \eqref{it:L1}, then we can compute rational $a_i, b_i$ for $i=1,2$. In fact, if $h$ is of Type~\eqref{it:L1-L2} this is straightforward, whereas if it is of Type~\eqref{it:L1}, we can rewrite $h$ as $h(x, y) = (a_1 x + b_1 y)^2\,(d a_1 x + d b_1 y)$.
\end{proof}

We will also need the following lemma about lower bounding a polynomial that is positive on a compact set.
\begin{lemma}
\label{lem:lower-bound}
Let $f\colon [0,1] \to \R$ be a polynomial of maximum degree at most $3$ with rational coefficients.  Suppose $f(x) > 0$ for all $x \in [0,1]$.  Then there exists a lower bound $m$ of polynomial size in the size of the coefficients of $f$ such that $f(x) > m > 0$ for all $x \in [0,1]$.
\end{lemma}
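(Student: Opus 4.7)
The plan is to combine an elementary extremum analysis with the root bound of Theorem~\ref{thm:polynomial_root_bounds}. Since $f$ is continuous on the compact set $[0,1]$, the minimum $m^* := \min_{x \in [0,1]} f(x)$ is attained either at an endpoint or at an interior critical point $x^* \in (0,1)$ with $f'(x^*) = 0$. It suffices to exhibit a polynomial-size positive rational lower bound on each candidate value $f(0)$, $f(1)$, and $f(x_i^*)$ for critical points $x_i^* \in (0,1)$; taking $m$ to be, say, half the smallest of these then gives the required $m$. The endpoint values $f(0) = c_0$ and $f(1) = c_0 + c_1 + c_2 + c_3$ are already positive rationals of polynomial size, so only the critical values require real work.

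To control the critical values, I would form the resultant
\[
p(y) \ := \ \mathrm{res}_x\!\bigl(f'(x),\; y - f(x)\bigr),
\]
viewed as a polynomial in $y$. Provided $f'$ is not identically zero (the case $f' \equiv 0$ makes $f$ constant and the statement trivial), this $p(y)$ has degree at most $2$ in $y$ and integer coefficients that are fixed polynomial expressions in the $c_i$, hence of polynomial-size encoding in the size of the coefficients of $f$. By the defining property of the resultant, for every complex critical point $x_i^*$ of $f$, the value $f(x_i^*)$ is a root of $p(y)$.

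Applying Theorem~\ref{thm:polynomial_root_bounds} to $p(y)$, after factoring out any trivial $y$ factor so as to isolate the non-zero roots, I would conclude that every non-zero real root $\bar y$ of $p$ satisfies
\[
|\bar y| \ \ge \ \frac{|a_m|}{|a_m| + \max_{i > m} |a_i|},
\]
where $a_m$ is the lowest-degree non-zero coefficient of the reduced polynomial. Since the $a_i$ have polynomial-size encoding, so does this lower bound. For any critical point $x_i^* \in [0,1]$ the hypothesis $f > 0$ on $[0,1]$ forces $f(x_i^*) > 0$, so the corresponding root of $p$ is a positive real number bounded below by this polynomial-size quantity. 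Critical points lying outside $[0,1]$ play no role in $m^*$, so the potential zero roots of $p$ arising from them are harmless once factored out.

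The main obstacle is the routine bookkeeping around degenerate situations: the resultant may carry a $y = 0$ root coming from a critical point outside $[0,1]$ where $f$ happens to vanish, so one must strip off $y$ factors before invoking Theorem~\ref{thm:polynomial_root_bounds}; likewise, when $c_3 = 0$ or $c_2 = c_3 = 0$ the degree of $f'$ drops, which is handled by noting that then the critical points are rational of polynomial size and the claim is immediate. Assembling the endpoint values with the polynomial-size lower bounds on the critical values then yields the desired $m$.
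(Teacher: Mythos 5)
Your argument is correct and follows essentially the same route as the paper: both reduce the problem to bounding $f$ at the endpoints and at interior critical points, show that each critical value is a root of a degree-two polynomial in $y$ with polynomial-size coefficients (you via the resultant $\mathrm{res}_x\bigl(f'(x),\,y-f(x)\bigr)$, the paper via the explicit identity $(f^*-p)^2 - q^2 t = 0$ obtained from the quadratic formula), and then invoke Theorem~\ref{thm:polynomial_root_bounds} to lower-bound its positive roots. Your treatment of the degenerate cases (degree drop when $c_3=0$, stripping zero roots of $p$) matches the paper's handling of the $a=0$ case and is sound.
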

\begin{proof}
We will bound the minimum value of $f$ on $[0,1]$.  Since $f$ is a polynomial, it attains its minimum value at a critical value or at one of the endpoints of the interval.  Clearly, $f(0)$ and $f(1)$ have a polynomial size in the size of the coefficients of $f$, so we only need to bound the function at any critical values.

Consider 
$$
f(x) = ax^3 + b x^2 + cx + d, \quad \text{and } \quad f'(x) = 3a x^2 + 2 b x + c = 0.
$$
If $a = 0$, then the only critical point is at $x = -c/(2b)$.  Clearly $f(-c/(2b))$ also has polynomial size.

Otherwise, $a \neq 0$.  Then, from the quadratic formula, it follows that  

$$
x_\pm = \frac{-b \pm \sqrt{b^2 - 3ac}}{3a}, \quad \text{and} \quad 
f(x_\pm) = \frac{27 a^2 d-9 a b c+2 b^3 \pm \left(6 a c - 2 b^2\right) \sqrt{b^2-3 a c} }{27 a^2}.
$$
Thus, set $p = \frac{27 a^2 d-9 a b c+2 b^3}{27 a^2}$, $q = \pm \frac{6 a c - 2 b^2}{27 a^2}$, and $t= b^2-3 a c$, and let $f^* = p + q \sqrt{t}$.
By rearranging, we arrive at 
$$
(f^* - p)^2 - q^2 t = 0.
$$
Since we know that $f^* > 0$, by Theorem~\ref{thm:polynomial_root_bounds} its smallest positive root is at least as large as some $m > 0$, where $m$ is of polynomial size in the original coefficients.
\end{proof}

We now prove Theorem~\ref{thm:min_cubic_unbounded}.

\begin{proof}[Proof of Theorem~\ref{thm:min_cubic_unbounded}]
We will either show that Problem~\eqref{eq:main} is unbounded and exhibit a feasible point $\bar \x$ and an integral ray $\bar \r$, each of polynomial size, such that the objective function tends to  $-\infty$ along $\bar \x + \lambda \bar \r$ as $\lambda \to \infty$, or we will exhibit a polynomial size bound $R$ such that the optimal solution $\x^*$ of Problem~\eqref{eq:main} is contained in $P \cap [-R,R]^2$, and hence the solution can be found in polynomial time using Theorem~\ref{thm:cubic}.

We assume that $P_I$ is unbounded, because otherwise Problem~\eqref{eq:main} can be solved using Theorem~\ref{thm:cubic}.  If the degree of $f$ is not greater than two, then Problem~\eqref{eq:main} can be solved by \cite{delpia_integer_2013}.  Thus we will assume that the degree of $f$ is exactly three.
We will also assume that $\rec(P)$ is a pointed cone, where $\rec(P)$ denotes the recession cone of $P$.  If not, then we can divide $P$ into four polyhedra where the recession cone is pointed, for instance, by restricting to the four standard orthants of $\R^2$, and solving on each polyhedron separately.  
Since $\rec(P)$ is pointed, using linear programming techniques, we can compute rational rays $\r^1, \r^2$ such that $\rec(P) = \cone \{\r^1, \r^2\} = \{ \lambda_1 \r^1 + \lambda_2 \r^2 : \lambda_1, \lambda_2 \geq 0\}$.   Without loss of generality, we assume that $\r^1, \r^2 \in \Z^2$ are integral, as this can be obtained by scaling.

 We will be focused on the behavior of $f(\x + \lambda \r)$ as $\lambda$ varies, where $\x \in P \cap \Z^2$ and $\r \in \rec(P)$.  Since $\rec(P) = \cone \{\r^1, \r^2\}$, we only need to restrict our attention to $\r \in \conv\{\r^1, \r^2\}$.  Since $P$ is pointed, $\ve 0 \notin \conv\{\r^1, \r^2\}$.  In large part, the behavior of $f(\x + \lambda \r)$ is determined by $h$, where $h$ is the non-trivial degree three homogeneous polynomial such that $f - h$ is of degree two. 
We decompose $f(\x + \lambda \r)$ by parametrizing with $\lambda$ in the following way:
\begin{equation}
\label{eq:f-decompose}
f(\x + \lambda \r) = h(\r) \lambda^3 + g_2(\x, \r) \lambda^2 + g_1(\x, \r) \lambda + f(\x).
\end{equation}

We consider cases based on the sign of $h$ on $\conv\{\r^1, \r^2\}$.  In particular, let $h^* = \min\{ h(\r) : \r \in \conv\{\r^1, \r^2\}\}$.  We can determine the sign of $h^*$ by considering the univariate polynomial $\bar h(s) := h(s \r^1 + (1-s) \r^2)$ on the interval $s \in [0,1]$ and applying Lemma~\ref{lem:numerical} part~\eqref{part3}  and testing the points based on the location of the zeros.  Note that the sign of $h^*$ can be determined without actually computing $h^*$. This is important, since the value $h^*$ could be irrational.

\textbf{\underline{Case 1:} Suppose $h^* < 0$}.

Then $h(\r) < 0$ for some $\r \in \conv\{\r^1, \r^2\}$.   We begin by computing a rational ray $\bar \r \in \conv\{\r^1, \r^2\}$ with $h(\bar \r) < 0$.    If $h(\r^1) < 0$ or $h(\r^2) < 0$, then we are done.  Otherwise, by Rolle's theorem, $\bar h(s) = 0$ somewhere on $[0,1]$.  By numerically approximating the zeros of $h$ with Lemma~\ref{lem:numerical} part~\eqref{part3}, we obtain separated upper and lower bounds, $\tilde \alpha_i^+$ and $\tilde \alpha_i^-$, on the zeros of $h$.  Note that any prescribed tolerance $\epsilon$ works here since we only want to separate the zeros.  Once the zeros are separated, one of these approximations must attain a negative value, call it $\tilde s$.  Then set $\bar \r = \tilde s \r^1 + (1-\tilde s) \r^2$.

Using Lenstra's algorithm, find a point $\bar\x \in P\cap \Z^2$ of polynomially bounded size. There exist infinitely many points $\bar\x + \lambda \bar\r \in P \cap \Z^2$ for $\lambda \geq 0$.   By~\eqref{eq:f-decompose}, $f(\bar \x + \lambda \bar \r)$ is a cubic polynomial in $\lambda$ with a negative leading coefficient.  Hence $f(\x + \lambda \r) \to - \infty$ as $\lambda \to \infty$, i.e., Problem~\eqref{eq:main} is unbounded from $\bar \x$ along the ray $\bar \r$.

\textbf{\underline{Case 2:} Suppose $h^* > 0$}.

Since $h^* > 0$ and $\r^1, \r^2$ are rational of polynomial size, and $\bar h$ is a polynomial with coefficients of polynomial size, we can also pick an $0 < h_0 \leq h^* $ of polynomial size using Lemma~\ref{lem:lower-bound}.

Using the Minkowski-Weyl theorem and linear programming, we can decompose $P$ into $P = Q + \rec(P)$, where $Q$ is a polytope and hence bounded.   Let $R_Q$ be of polynomial size such that $Q \subseteq [-R_Q, R_Q]^2$.

Again, using Lenstra's algorithm, determine any $\bar\x \in P \cap \Z^2$ of polynomial size.
We now will determine a bound on $\lambda$ such that $f(\q + \lambda \r) \geq f(\bar\x)$ for all $\q \in Q$ and $\r \in \conv\{\r^1, \r^2\}$.  

From~\eqref{eq:f-decompose}, we have
$
f(\q + \lambda \r) - f(\bar\x)= h(\r) \lambda^3 + g_2(\q, \r) \lambda^{2} + g_1(\q, \r) \lambda + f(\q) - f(\bar\x).
$
Note that $g_i$ are polynomials with coefficients of size bounded by a polynomial in the size of the coefficients of $f$.  Since $(\q, \r) \in Q \times \conv\{\r^1, \r^2\}$, there exists a uniform polynomial size upper bound on $|f(\q) - f(\bar\x)|, |g_i(\q,\r)|$, $i=1,2$, on $Q \times \conv\{\r^1, \r^2\}$, call this $U$. 
Then
$$
f(\x) - f(\bar\x) \geq h(\r) \lambda^3 - 3 U \lambda^{2} \geq h_0 \lambda^3 - 3 U \lambda^{2} = (h_0 \lambda - 3 U) \lambda^2 \geq 0,
$$
where first and last inequalities together hold whenever $\lambda \geq \max\{\frac{3 U}{h_0}, 1\}$.  Therefore, if $\x = \q + \lambda \r \in P \cap \Z^2$ with $\lambda \geq \frac{3U}{h_0}$, then $f(\x) \geq f(\bar\x)$.

Finally, let $R := R_Q + \max\left\{ 1, \frac{3 U}{h_0} \right\} \times \max\left\{\| \r^1 \|_2^2, \| \r^2\|_2^2\right\}$.

Therefore, the optimal solution to Problem~\eqref{eq:main} is contained in $[-R,R]^2$ and $R$ is of polynomial size.

\textbf{\underline{Case 3:} Suppose $h^* \geq 0$}.

If $h^* > 0$, then we are in Case 2.  Otherwise, there exists at least one ray $\r \in \conv\{\r^1, \r^2\}$ such that $h(\r) = 0$.
Now $\rec(P) \subseteq S^h_{\ge 0}$.
Recall that the homogeneous polynomial $h$ is of one of the four types (Types~\eqref{it:L1-L2-L3}-\eqref{it:L-IR}).  Notice that Type~\eqref{it:L1-L2} is the only type where $\intr(\rec(P)) \cap  S^h_{=0}$ can be non-empty (cf.\ Figure~\ref{fig:cubic-homogeneous-classes}).  Therefore, if $h$ is not of Type~\eqref{it:L1-L2}, then $\r$ must be an extreme ray of $\rec(P)$, i.e., $\r=\r^1$ or $\r = \r^2$.  Otherwise, when $h$ is of Type~\eqref{it:L1-L2}, the set $S^h_{=0}$ is a rational line.  By Lemma~\ref{lem:determine-case}, we can distinguish these cases and compute the rational line if necessary.  Hence, we can compute  all candidate rays $\r\in \conv\{\r^1, \r^2\}$ such that $h(\r) = 0$ in polynomial time.   

We now divide the problem such that we must consider at most one of these recession rays and such that it is an extreme ray of the divided problem. This can be done, for instance, by averaging the candidate rays,  then restricting recession cones described by neighboring pairs of rays. We show how to solve just one such problem, as the others can be solved in an identical manner.  Thus, for the remainder of the proof, we redefine $\r^1, \r^2$ such that $\rec(P) = \cone \{ \r^1, \r^2 \}$ with $h(\r^1) = 0$, $h(\r) > 0$ for all $\r \in \conv\{\r^1, \r^2\} \setminus \{\r^1\}$.  Without loss of generality, $\r^1, \r^2 \in \Z^2$.

Finally, we make one more decomposition.   Let $Q$ be the convex hull of the vertices of $P_I$, in particular $P_I = Q + \rec(P)$, and $Q$ is bounded. Let $\hat \x$ be the vertex of $Q$ such that $P_I = P_1 \cup P_2$ where $P_1 = \hat \x + \rec(P)$ and $P_2 = Q + \cone\{\r^2\}$. 
Choose $(\r^1)^\perp$ as either $(-r^1_2, r^1_1)$ or $(r^1_2, -r^1_1)$ such that $\r^2 \cdot (\r^1)^\perp > 0$. Then the vertex $\hat \x$ is a solution to the minimization problem $\min\{ (\r^1)^\perp\cdot  \x :  \x \in P \cap \Z^2\}$.

Notice that $\rec(P_2) = \cone(\r^2)$ and $h(\r^2) > 0$.  Thus, we can derive a bound just as in Case 2, that is, we find polynomial size bounds $\bar \lambda_2$ and $R_2$ such that the optimal solution in $P_2$ is contained in $[-R_2, R_2]^2$ and also $f(\q + \lambda_2 \r^2) \geq f(\hat \x)$ for all $\lambda_2 \geq \bar \lambda_2$ and $\q \in Q$.

Henceforth, we only need to focus on the region $P_1 = \hat \x + \rec(P)$.

We will use equation~\eqref{eq:f-decompose} with $\r = \r^1$ fixed.  To make explicit that $g_1$, $g_2$ only depend on $\x$, we write $g_1^{\r^1}(\x) := g_1(\x,\r^1)$ and $g_2^{\r^1}(\x) := g_2(\x,\r^1)$.   Since we know $\r^1$, we can compute explicitly the coefficients in the polynomials $g_1^{\r^1}(\x)$ and $g_2^{\r^1}(\x)$.   Since $h(\r^1) = 0$, equation~\eqref{eq:f-decompose} reduces to
\begin{equation}
f(\hat \x + \lambda \r^1 + \lambda_2 \r^2) = g_2^{\r^1}(\hat \x + \lambda_2 \r^2) \lambda^2 + g_1^{\r^1}(\hat \x+ \lambda_2 \r^2) \lambda + f(\hat \x + \lambda_2 \r^2).
\end{equation}

\textbf{Case 3a: Suppose $g_2^{\r^1}(\x) \not\equiv 0$.}
Now consider the equivalent rewritings of $f(\x + \mu \r^1 + \lambda \r^1)$ as polynomials in $\lambda$, 
\begin{align*}
f((\x + \mu \r^1) + \lambda \r^1) &= g_2^{\r^1}(\x + \mu \r^1) \lambda^2 + g_1^{\r^1}(\x + \mu \r^1) \lambda + f(\x+ \mu \r^1),\\
f(\x + (\mu + \lambda) \r^1) &= g_2^{\r^1}(\x) (\mu + \lambda)^2 + g_1^{\r^1}(\x) (\mu +\lambda) + f(\x).
\end{align*}
Considering these as polynomials in $\lambda$, the highest order coefficients, i.e., the coefficients of $\lambda^2$, must coincide.  Hence $g_2^{\r^1}(\x + \mu \r^1) = g_2^{\r^1}(\x)$, so $g_2$ is invariant with respect to changes in the $\r^1$ direction.  

Therefore, we compute 
\begin{equation}
\label{eq:min-g2}
\begin{split}
g_2^* &:= \min \{ g_2^{\r^1}(\x) :  \x \in P_1\cap \Z^2 \}
= \min \{ g_2^{\r^1}(\hat \x + \lambda_2 \r^2) :  \lambda_1 \r^1 + \lambda_2 \r^2 \in \Z^2, \lambda_1, \lambda_2 \geq 0 \} 
\\
&\phantom{:}= \min \{ g_2^{\r^1}(\hat \x + \lambda_2 \r^2) :  \lambda_2  \in \tfrac{p}{q} \Z_+ \},
\end{split}
\end{equation}
where $p = \gcd \{ r^1_1, r^1_2 \}$, and $q = (\r^1)^\perp \cdot \r^2$.  The last inequality holds since $\exists \x \in \Z^2$ such that $(\r^1)^\perp \cdot \x = \z \in \Z$ if and only if $\z \in p \Z$ and moreover $(\r^1)^\perp \cdot \x  = (\r^1)^\perp \cdot(\hat \x + \lambda_1 \r^1 + \lambda_2 \r^2) = (\r^1)^\perp \cdot(\hat \x + \lambda_2 \r^2)$.  Therefore, $\lambda_2 (\r^1)^\perp \cdot \r^2 \in p \Z$, i.e., $\lambda_2 \in \tfrac{p}{q} \Z$.  
This last problem is a one dimensional integer polynomial optimization problem that can be solved by Theorem~\ref{thm:dimension1}.

We now do a case analysis on the sign of $g_2^*$ provided that $g_2^{\r^1}(\x) \not\equiv 0$.

\textbf{Case 3a1: Suppose $g_2^* < 0$.}
Then let $\lambda_2^*$ be a minimizer to the minimization problem \eqref{eq:min-g2}.  Let $\bar \x \in \Z^2$ be such that $\bar \x = \hat \x + \lambda_2^*\r^2 + \lambda_1 \r^1 \in \Z^2$ for some $\lambda_1 \geq 0$, which can be found using a linear integer program.
Since $f(\bar\x + \lambda \r^1)$ is a quadratic polynomial in $\lambda$ with a negative leading coefficient and $f(\bar \x + \lambda \r^1) \to -\infty$ as $\lambda \to \infty$, the problem is unbounded from the point $\bar \x$ along the ray $\r^1$.

\textbf{Case 3a2: Suppose $g_2^* > 0$.}
Since all inputs are integral, $g_2^{\r^1}(\hat \x) \in \Z$ and $g_2^* >0$, we have $g_2^{\r^1}(\hat \x) \geq 1$.
Since $g_2^{\r^1}(\x)$ is linear in $\x$, we have for $\ve \epsilon \in \R^2$, with $\| \ve \epsilon \|_\infty \le 1$
$$
| g_2^{\r^1}(\hat \x) - g_2(\hat \x, \r^1 + \ve \epsilon)| \leq 45 \times M \times \max \{1, \|\hat \x\|_\infty\} \times \|\r^1\|_\infty \times \|\ve \epsilon\|_\infty.
$$
Recall that $M$ is the sum of the absolute values of the coefficients of $f$.
We choose $\ve \epsilon = (\r^2 - \r^1) \, \epsilon$ such that $0 < \epsilon < \frac{1}{\|\r^2 - \r^1\|_2}$, and such that $\epsilon < \frac{1}{2 \times 45 \times M \times \max \{1, \|\hat \x\|_\infty\} \times \|\r^1\|_\infty }$.  Let $\hat \r = \r^1 + (\r^2 - \r^1) \, \epsilon$.  It follows that $\hat \r$ is of polynomial size and for all $\r \in \conv\{\r^1, \hat \r\}$, we have
$$
g_2(\hat \x,\r) \geq g_2^{\r^1}(\hat \x) - |g_2^{\r^1}(\hat \x) - g_2(\hat \x,\r)|  \geq 1 - \frac{1}{2} = \frac{1}{2}.
$$

We now decompose $P_1$ into pieces $P_{11} = \hat \x + \cone \{ \r^1, \hat \r \}$ and $P_{12} = \hat \x + \cone \{ \hat \r, \r^2 \}$.

On $P_{12}$,  a polynomial size bound $R_{12}$ is given by the analysis in Case~2 since $h(\r) > 0$ for all $\r \in \conv\{\hat \r, \r^2\}$.   

On $P_{11}$, consider any $\hat \x \in P_{11}$. Similar to Case~2, we find a $\bar \lambda$ of polynomial size such that $f(\hat \x + \lambda \r) \geq f(\hat \x)$ for all $\lambda \geq \bar \lambda$ and $\r \in  [\r^1, \hat \r]$.  We determine $\bar \lambda$ this time using the fact that $g_2 > 0$.   Using equation~\eqref{eq:f-decompose} and the fact that $h(\r) \geq 0$, we have
$$
f(\hat \x + \lambda \r) - f(\hat \x)= h(\r) \lambda^3 + g_2(\hat \x, \r) \lambda^2 + g_1(\hat \x, \r) \lambda + f(\hat\x) - f(\hat\x) \geq \lambda \big(g_2(\hat \x, \r) \lambda + g_1(\hat \x, \r) \big) \geq 0.
$$
This last inequality holds when $g_2(\hat \x, \r) \lambda \geq |g_1(\hat \x, \r)|$.  We can write down $g_i(\hat \x, \r)$, $i=1,2$, explicitly as polynomials of $\r$ with coefficients of size bounded by a polynomial in the size of the coefficients of $f$.  Since $\r \in  \conv\{\r^1, \hat \r\}$, which is a compact set, there exists a uniform polynomial size upper bound on $ |g_1(\hat \x,\r)|$, call this $U$.   
Hence, we can choose $\bar \lambda = U$.

Finally, let $R_{11} := \|\hat \x\|_\infty + \max\left\{1, U \right\} \times \max\left\{\| \r^1 \|^2_2, \| \hat \r\|^2_2\right\}$.
Therefore, the optimal solution in $P_1$ to Problem~\eqref{eq:main} is contained in $[-R,R]^2$ for $R = \max\left\{R_{11}, R_{12}\right\}$, which is of polynomial size since $R_{11}, R_{12}$ are of polynomial size.

\textbf{Case 3a3: Suppose $g_2^* = 0$.}
Since $g_2^{\r^1}(\x) \not\equiv 0$, there are only polynomially many points where $g_2^{\r^1}(\hat \x + \lambda_2 \r^2) = 0$ for $\lambda_2 \in \tfrac{1}{q} \Z_+$.  After repeated application of Theorem~\ref{thm:dimension1}, we can find all such points, and call them $\lambda_{2,1}, \dots, \lambda_{2,m}$.  In fact, we can show that $g_2^{\r^1}(\hat \x + \lambda_2 \r^2)$ is linear in terms of $\lambda_2$, but we use the more general technique here as it will be repeated in Case 3b3.  

Each subproblem $\min\{f(\hat \x + \lambda_1 \r^1 + \lambda_{2,i} \r^2): \hat \x + \lambda_1 \r^1 + \lambda_{2,i} \r^2 \in P_1 \cap \Z^2\}$ can be converted into a univariate subproblem and solved with Theorem~\ref{thm:dimension1}.

The remaining integer points in the feasible region are contained in the polyhedra $P_1^i = P_1 \cap \{ \hat \x + \x : \lambda_{2,i}  (\r^1)^\perp \cdot \r^2 \leq (\r^1)^\perp \cdot \x \leq \lambda_{2,i+1}  (\r^1)^\perp \cdot \r^2\}$ for $i=0, \dots, m+1$, where we define $\lambda_{2,0} = 0$ and $\lambda_{2,m+1} = \infty$.

As before, decompose each $P_1^i$ as we did with $P$, into the polyhedra $Q_1^i + \cone\{\r^2\}$ and $\hat \x^i + \rec(P_1^i)$.  As before with $P_2$,  the optimal solution on $Q_1^i + \cone\{\r^2\}$ can be bounded using the techniques of Case 2.   
On each subproblem $\hat \x^i + \rec(P_1^i)$, we have $g_2^{\r^1}(\hat \x^i) > 0$; hence we can apply the techniques of Case 3a2.

\textbf{Case 3b: Suppose $g_2^{\r^1}(\x) \equiv 0$, but $g_1^{\r^1}(\x) \not \equiv 0$.}

Consider the equivalent rewritings of $f(\x + \mu \r^1 + \lambda \r^1)$ as polynomials in $\lambda$, 
\begin{align*}
f((\x + \mu \r^1) + \lambda \r^1) &= g_1^{\r^1}(\x + \mu \r^1) \lambda + f(\x+ \mu \r^1),\\
f(\x + (\mu + \lambda) \r^1) &=  g_1^{\r^1}(\x) (\mu +\lambda) + f(\x).
\end{align*}
Considering these as polynomials in $\lambda$, the coefficients of $\lambda$ must coincide.  Hence $g_1^{\r^1}(\x + \mu \r^1) = g_1^{\r^1}(\x)$, so we see that $g_1^{\r^1}$ is invariant with respect to changes in the $\r^1$ direction.  

Therefore, we compute 
\begin{equation}
\label{eq:min-g1}
\begin{split}
g_1^* &:= \min \{ g_1^{\r^1}(\x) :  \x \in P_1\cap \Z^2 \} = \min \{ g_1^{\r^1}(\hat \x + \lambda_2 \r^2) :  \lambda_1 \r^1 + \lambda_2 \r^2 \in \Z^2, \lambda_1, \lambda_2 \geq 0 \} 
\\
&\phantom{:}= \min \{ g_1^{\r^1}(\hat \x + \lambda_2 \r^2) :  \lambda_2  \in \tfrac{p}{q} \Z_+ \},
\end{split}
\end{equation}
where $p = \gcd \{ r^1_1, r^1_2 \}$, $q = (\r^1)^\perp \cdot \r^2$, and the last inequality holds just as in Case~3a.
This last problem is a one-dimensional integer polynomial optimization problem that can be solved by Theorem~\ref{thm:dimension1}.

We now do a case analysis on the sign of $g_1^*$ provided that $g_1^{\r^1}(\x) \not\equiv 0$.  

\textbf{Case 3b1: Suppose $g_1^* < 0$.}
Similar to Case 3a1, there is a point $\bar \x \in P \cap \Z^2$ such that $g_1^{\r^1}(\bar \x) = g_1^* < 0$.
Then $f(\bar \x + \lambda_1 \r^1) = g_1^{\r^1}(\bar \x) \lambda_1 + f(\bar \x) \to -\infty$ for $\lambda_1 \to \infty$, so the problem is unbounded from the point $\bar \x$ in the direction $\r^1$.

\textbf{Case 3b2: Suppose $g_1^* > 0$.}  Since the minimization problem for $g_1^*$ is discrete and the objective is at most quadratic in $\lambda_2$, we have that $g_1^* \geq \tfrac{1}{q^2}$.  

First, for $\lambda_1 \ge 0$ we have that 
\begin{align*}
f(\hat \x + \lambda_1 \r^1 + \lambda_2 \r^2) &= h(\r^2) \lambda_2^3 + g_2(\hat \x, \r^2) \lambda_2^2 + g_1(\hat \x, \r^2) \lambda_2 + g_1^{\r^1}(\hat \x + \lambda_2 \r^2) \lambda_1 + f(\hat \x)
\\
&\ge h(\r^2) \lambda_2^3 + g_2(\hat \x, \r^2) \lambda_2^2 + g_1(\hat \x, \r^2) \lambda_2 + f(\hat \x) .
\end{align*}
In particular, $f(\hat \x + \lambda_1 \r^1 + \lambda_2 \r^2) \ge f(\hat \x)$ if $h(\r^2) \lambda_2^2 + g_2(\hat \x, \r^2) \lambda_2 + g_1(\hat \x, \r^2) \ge 0$. By Theorem~\ref{thm:polynomial_root_bounds} and $h(\r^2) > 0$, we have that this is satisfied for $\lambda_2 \ge \bar \lambda_2 := 1+\frac{1}{|h(\r^2)|}\max\{|g_2(\hat \x, \r^2)|,|g_1(\hat \x, \r^2)|\}$, which is of polynomial size.
Thus for $\lambda_2 \ge  \bar \lambda_2$  and $\lambda_1 \geq 0$, we have that $f(\hat \x + \lambda_1 \r^1 + \lambda_2 \r^2) \ge f(\hat \x)$.

Next, let 
\begin{align*}
L &\leq \min\{ h(\r^2) \lambda_2^3 + g_2(\hat \x, \r^2) \lambda_2^2 + g_1(\hat \x, \r^2) \lambda_2 : \lambda_2 \geq 0\}
\\
&= \min\{ h(\r^2) \lambda_2^3 + g_2(\hat \x, \r^2) \lambda_2^2 + g_1(\hat \x, \r^2) \lambda_2 : \lambda_2 \in [0,\bar \lambda_2]\}.
\end{align*}
The last inequality holds because either the minimum is zero, in which case $\lambda_2 = 0$ is a minimizer, or the minimum is negative, in which case there is a minimizer in $[0, \bar \lambda_2]$ because all zeros lie in this interval.
Since $[0,\bar \lambda_2]$ is compact and polynomially bounded, we can find a $L$ of polynomial size.  Then 
\begin{align*}
f(\hat \x + \lambda_1 \r^1 + \lambda_2 \r^2) &= h(\r^2) \lambda_2^3 + g_2(\hat \x, \r^2) \lambda_2^2 + g_1(\hat \x, \r^2) \lambda_2 + g_1^{\r^1}(\hat \x + \lambda_2 \r^2) \lambda_1 + f(\hat \x)
\\
&\ge L + \frac{\lambda_1}{q^2} + f(\hat \x).
\end{align*} 
Thus, $f(\hat \x + \lambda_1 \r^1 + \lambda_2 \r^2) \geq f(\hat \x)$ when $\lambda_1 \geq \bar \lambda_1 := - L q^2$.
Set $R := \bar \lambda_1 \times \|\r^1\|^2_2 +  \bar \lambda_2 \times \|\r^2\|^2_2$.  Then the optimal solution on $P_1$ is bounded in $[-R,R]^2$.

\textbf{Case 3b3: Suppose $g_1^* = 0$.}
This is similar to Case 3a3.

\textbf{Case 3c: Suppose $g_2^{\r^1}(\x) \equiv 0$ and $g_1^{\r^1}(\x) \equiv 0$.}

In this case, $f(\x) = f(\x + \lambda \r^1)$ for all $\lambda$.  Let $p, q$ be as in Case~3.  Then 
$$
\min \{ f(\x) : \x \in P_1 \cap \Z^2 \} = \min \{ f(\hat \x + \lambda_2 \r^2) : \lambda_2 \in \tfrac{p}{q}\Z_+ \},
$$
which again can be solved using Theorem~\ref{thm:dimension1}.  
\end{proof}

\begin{APPENDICES}

\section{Additional Proofs}

In this section we give some proofs that we omit from the main part of the paper.

\begin{proof}[Proof of Lemma~\ref{lem:determine-quasiconvex}.]
We will just prove statement $(\ref{itm:Dfsmaller0})$, as the proof for $(\ref{itm:Dflarger0})$ is similar.
Note that $D_f^1(\x) = -f_1(\x)^2 \leq 0$, so we need to establish that it is strictly negative.  Clearly this restriction is symmetric in $x_1$ and $x_2$.  Hence, we only need either   $-f_1(\x)^2 < 0$ or $-f_2(\x)^2< 0$.  Suppose that $f_1(\x)^2 = f_2(\x)^2 = 0$, in particular, $\nabla f(\x) = 0$.  By definition of $D_f$, expanding the determinant along the first row or first column shows that $D_f(\x) = 0$, which is a contradiction.  Therefore, $\nabla f(\x) \neq \ve 0$.  Hence, by Theorem~3.4.13 in \cite{cambini_generalized_2009}, $f$ is quasiconvex on $S$, and by Theorem~2.2.12 in \cite{cambini_generalized_2009}, $f$ is quasiconvex on the closure of $S$.
\end{proof}

\begin{proof}[Proof of Lemma~\ref{lemma:Df-det}.]
Applying Euler's Theorem for homogeneous functions
to the gradient, we obtain the equation 
\begin{equation}
\label{eq:euler2}
\nabla h(\x) = \frac{1}{d-1} \nabla^2 h(\x) \cdot \x \ \text{ for all } \x \in \R^n.
\end{equation}
Consider the linear combination of columns of $H_f(\x)$ given by 
$$
\begin{bmatrix} \nabla h^T(\x)\\   \nabla^2 h(\x)
\end{bmatrix} \x
= \begin{bmatrix} \nabla h^T(\x)\cdot \x\\   \nabla^2 h(\x)\cdot \x
\end{bmatrix} 
= \begin{bmatrix}d \, h(\x) \\    (d-1) \nabla h(\x)
\end{bmatrix},
$$
where the last equation comes from applying Euler's Theorem
 and equation \eqref{eq:euler2}.  We add this vector to the first column of $H_h(\x)$, which does not change $D_h(\x)$, thus
\begin{align*}
D_h(\x) &=  \begin{vmatrix}
0 & \nabla h^T(\x)\\
\nabla h(\x) & \nabla^2 h(\x)
\end{vmatrix} 
= 
 \begin{vmatrix}
d\, h(\x) & \nabla h^T(\x)\\
d\, \nabla h(\x)  & \nabla^2 h(\x)
\end{vmatrix} 
=d 
 \begin{vmatrix}
 h(\x) & \nabla h ^T(\x)\\
\nabla h(\x)  & \nabla^2 h(\x)
\end{vmatrix}. 
\end{align*}
Expanding about the left column, we separate this into two determinant computations
\begin{align*}
D_h(\x) &=  d \,
h(\x) \det(\nabla^2 h(\x)) 
+ d
 \begin{vmatrix}
 0 & \nabla h^T(\x)\\
\nabla h(\x)  & \nabla^2 h(\x)
\end{vmatrix}
= 
 d \,
h \det(\nabla^2 h(\x)) 
+ d\,
 D_h(\x). 
\end{align*}
Solving for $D_h(\x)$ finishes the result.
\end{proof}

\begin{proof}[Proof of Corollary~\ref{cor:Df_homogeneous}.] 
Since $f$ is homogeneous translatable, there exists a $\t \in \R^2$ such that $f(\x + \t) = h(\x)$ for some homogeneous polynomial $h$ of degree $d$.
By Lemma~\ref{lemma:Df-det},  
\begin{equation}
D_h(\x) = \frac{-d}{d-1}h(\x) \det(\nabla^2h(\x)) = \frac{-d}{d-1} h(\x) \left(  h_{11}(\x)h_{22}(\x) -  h_{12}^2(\x)\right) .
\end{equation}
By applying Euler's Theorem to the partial derivatives, we find that the second partial derivatives are homogeneous of degree $d-2$.  Since products of homogeneous functions are homogeneous of the degrees added and sums of homogeneous functions are homogeneous provided they have the same degree, we see that $D_h$ is homogeneous of degree $d + (d-2) + (d-2) = 3d-4$.   
If $\det(\nabla^2 h)$ is the zero function, then $D_h$ is actually the zero function.
Therefore, $D_h$ is either a homogeneous polynomial of degree $3d-4$, or it is the zero function.  
Finally, notice that $D_f(\x+ \t) = D_h(\x)$.  Therefore $D_f$ is also homogeneous translatable.
\end{proof}

We last show how to check in polynomial time if a polynomial $f\colon \R^2 \to \R$ is homogeneous translatable.

\begin{proposition}
\label{prop:ht}
Let $f \colon \R^2 \to \R$ be a polynomial of degree $d\geq 1$ given by 
$
f(\x) = \sum_{\v \in \Z^2_+, \|\v\|_1 \leq d} c_\v \x^{\v}
$
where $c_\v \in \Z$.  In polynomial time in the size of the coefficients $c_\v$, we can determine if $f$ is homogeneous translatable and if so, compute a rational translation vector $\t$ such that $f(\x+\t)$ is a homogeneous polynomial. 
\end{proposition}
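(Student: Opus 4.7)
The plan is to reduce the question of homogeneous translatability to the solvability of a linear system in $\t = (t_1, t_2)$, which can then be handled by Gaussian elimination.

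The key characterization is Euler's identity: a polynomial $g(\y)$ is homogeneous of degree $d$ if and only if $y_1 \partial_{y_1} g + y_2 \partial_{y_2} g = d \cdot g$. The ``only if'' direction is Euler's theorem, while the ``if'' direction follows by decomposing $g$ as the sum $g_0 + \cdots + g_d$ of its homogeneous components and observing that the identity then forces $g_k = 0$ for each $k \ne d$. Furthermore, because the top-degree homogeneous part of $f(\x + \t)$ coincides with that of $f$, the only possible degree of homogeneity of $f(\x+\t)$ is $d = \deg f$ itself.

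I would apply this characterization to $g(\x) := f(\x + \t)$ and rewrite it in terms of $\y := \x + \t$: the condition that $f(\x+\t)$ be homogeneous of degree $d$ becomes
\begin{equation*}
\sum_{i=1}^2 y_i \, \partial_i f(\y) \;-\; d \cdot f(\y) \;=\; \sum_{i=1}^2 t_i \, \partial_i f(\y), \quad \text{for all } \y \in \R^2.
\end{equation*}
The left-hand side is an explicit polynomial in $\y$ whose coefficients are integer combinations of the $c_\v$, while the right-hand side is a polynomial in $\y$ whose coefficients are linear forms in $t_1, t_2$. Matching coefficients monomial by monomial produces a linear system $A \t = \b$ with polynomially many rows, integer entries of polynomially bounded bit size, and only two unknowns. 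Gaussian elimination over $\Q$ then decides consistency in polynomial time and, when the system is consistent, outputs a rational $\t$ whose bit size is polynomial by Cramer's rule.

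The main obstacle is recognizing the right characterization so that the resulting system in $\t$ is \emph{linear}. A naive route via the Taylor expansion $f(\x+\t) = \sum_\alpha \tfrac{1}{\alpha!} D^\alpha f(\t)\, \x^\alpha$ demands $D^\alpha f(\t) = 0$ for all $|\alpha| < d$, which is a system of polynomial equations in $\t$ of degree up to $d-1$ and not obviously tractable in polynomial time. Invoking Euler at the level of $f(\x+\t)$ collapses these conditions into a single first-order identity whose coefficient match in $\y$ is linear in $\t$; from there the proof is routine linear algebra.
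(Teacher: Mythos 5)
Your proposal is correct, and it takes a genuinely different route from the paper. The paper first applies a linear change of variables $T$ to force some top-degree coefficient to vanish, then reads $t_1$ off a single linear relation coming from a degree-$(d-1)$ coefficient of $f(T\x+\t)$, handles $t_2$ by a small case analysis (including the case where $t_2$ never appears), and finally certifies translatability by fully expanding $f(T\x+\t)$ with the candidate $\t$ plugged in --- thereby sidestepping the fact that the lower-degree coefficient conditions are nonlinear in $\t$. Your argument instead linearizes \emph{all} of the conditions at once: writing the homogeneity of $g(\x)=f(\x+\t)$ via Euler's identity and substituting $\y=\x+\t$ gives the single polynomial identity $\sum_i y_i\,\partial_i f(\y) - d\,f(\y) = \sum_i t_i\,\partial_i f(\y)$, whose coefficient-by-coefficient matching is a linear system $A\t=\b$ over $\Z$ with $O(d^2)$ rows (indeed, for a monomial of total degree $k$ one gets $(k-d)c_\v=(v_1+1)c_{(v_1+1,v_2)}t_1+(v_2+1)c_{(v_1,v_2+1)}t_2$, of which the paper's displayed relation is the $k=d-1$ case); both directions of the equivalence are justified (Euler's theorem one way, decomposition into homogeneous components the other), the degree of homogeneity is correctly pinned to $d$, and rational solvability follows because a rational linear system consistent over $\R$ is consistent over $\Q$. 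What your approach buys is the elimination of the preliminary transformation, the case analysis, and the separate verification step, and it extends verbatim to $n$ variables; what the paper's approach buys is explicit near-closed-form expressions for $t_1,t_2$ in terms of the coefficients rather than an appeal to Gaussian elimination.
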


\begin{proof}
We begin by applying an invertible linear transformation $T$ to the variables such that $f(T\x)$ such that $f(T\x) = \sum_{\v \in \Z^2_+, \|\v\|_1 \leq d} \bar c_\v \x^{\v}$ where $\bar c_\v = 0$ for some $\v \in \Z^2_+$ with $\| \v \|_1 = d$.  If $f$ already has this property, then we take $T = I$.  Otherwise, $c_{(d,0)}, c_{(d-1,1)} \neq 0$.  Then we choose $T$ such that $T\x = (x_1 - c_{(d-1,1)} x_2 / (dc_{d,0}), x_2)$.  With this choice, $\bar c_{(d-1,1)} = 0$.  
Since homogeneity is preserved under linear transformations, $f$ is homogeneous translatable if and only if $f \circ T$ is homogeneous translatable.

Now that $\bar c_\v = 0$ for some $\v\in \Z^2_+$ with $\|\v\|_1 = d$, there must exist a $\bar \v\in \Z^2_+$ with $\|\bar \v\|_1 = d$ and $\bar c_{\bar \v} = 0$ such that either $\bar c_{(\bar v_1 + 1, \bar v_2 - 1)} \neq 0$ or $\bar c_{(\bar v_1 -1, \bar v_2 + 1)} \neq 0$.  Fix such a $\bar \v$ and assume, without  loss of generality, that $\bar c_{(\bar v_1 + 1, \bar v_2 -1)} \neq 0$.

We consider the monomials of degree $d-1$ (in terms of the $\x$ variables) of the expanded version of $f(T\x + \t)$. 
The coefficient on the monomial $\x^\v$ for any $\|\v\|_1 = d-1$ must vanish for $\t$ to be a desired translation; hence we must have the equation
$$
\bar c_\v + { v_1 + 1 \choose 1} \bar c_{(v_1 + 1, v_2)} t_1 + {v_2 + 1 \choose 1} \bar c_{(v_1, v_2 + 1)} t_2 = 0.
$$
In particular, this relation for $ \v = (\bar v_1, \bar v_2 - 1)$ shows that $t_1 = -\bar c_{(\bar v_1, \bar v_2 -1)} \left(  { v_1 + 1 \choose 1} \bar c_{(\bar v_1 + 1, \bar v_2 - 1} \right)^{-1}$.
If there is any relation where the coefficient on $t_2$ is nonzero, then $t_2$ is determined by this equation and  by $t_1$.
Otherwise, if  $t_2$ does not appear in any coefficients of $f(T\x + \t)$, then it suffices to choose $t_2 = 0$ since this value has no effect on $f(T\x + \t)$.   If instead $t_2$ only appears in lower degree monomials, then $f$ is not homogeneous translatable since the coefficient of  $\x^{\hat \v}$ in the expansion of $f(T\x + \t)$, where $\hat \v \in \argmax \{ \|\v\|_1 : \v \in \Z^2_+, \|\v\|_1 \leq d, c_\v \neq 0, v_2 \neq 0\}$, is exactly $c_{\hat \v} \neq 0$ and is not affected by the translation $\t$.

Finally, by computing all coefficients in the expanded version of $f(T\x + \t)$, we can test if $f(T\x+\t)$ is homogeneous hence verify whether or not $f\circ T$ is homogeneous translatable.  If so, then $f$ is homogeneous translatable with translation vector $T^{-1} \t$.  All the operations done in these calculations can be carried out in polynomial time in the size of the coefficients $c_{\v}$.  
\end{proof}

\end{APPENDICES}

\section*{Acknowledgments}
We would like to thank Amitabh Basu for the discussions about Theorem~\ref{thm:dd-min}.

\bibliographystyle{amsplain}
\bibliography{references}

\end{document}